\definecolor{NBrown}{HTML}{66220C}
\definecolor{NAqua}{HTML}{00698C}
\definecolor{ForestGreen}{HTML}{228b22}
\newcommand{\sqn}{\sqrt{n}}
\newtheorem{theorem}{Theorem}[section]
\newtheorem{lemma}[theorem]{Lemma}
\newtheorem{corollary}[theorem]{Corollary}
\newtheorem{remark}[theorem]{Remark}
\newtheorem{claim}[theorem]{Claim}
\newtheorem{assumption}[theorem]{Assumption}
\newtheorem{definition}[theorem]{Definition}
\newtheorem{example}[theorem]{Example}
\newtheorem{open}[theorem]{Problem}
\newcommand{\E}[0]{\mathbb{E}}
\newcommand{\pr}[0]{\mathbb{P}}
\newcommand{\Var}[0]{\text{Var}}
\newcommand{\prstart}[2]{\mathbb{P}_{#2}\!\left(#1\right)}
\DeclareMathOperator{\var}{Var}
\DeclareMathOperator{\CRT}{CRT}
\DeclareMathOperator{\LE}{LE}
\DeclareMathOperator{\LERW}{LERW}
\DeclareMathOperator{\Capa}{Cap}
\DeclareMathOperator{\UST}{UST}
\DeclareMathOperator{\diam}{Diam}
\DeclareMathOperator{\height}{Height}
\renewcommand{\epsilon}{\varepsilon}
\newcommand{\pand}[0]{\ \text{and} \ }
\newcommand{\RR}{\mathbb{R}}
\newcommand{\ZZ}{\mathbb{Z}}
\newcommand{\XX}{\mathbb{X}}
\newcommand{\NN}{\mathbb{N}}
\newcommand{\T}{\mathcal{T}}
\newcommand{\M}{\mathcal{M}}
\newcommand{\Mm}{M_\ell}
\newcommand{\Wj}{\varphi_j}
\newcommand{\V}{V^{\text{bad}}}
\newcommand{\Capp}{\mathrm{Cap}}
\newcommand{\Capk}{\mathrm{Cap}_k}
\newcommand{\Capkl}{\mathrm{Cap}_{k_\ell}}
\newcommand{\tmix}{t_{\mathrm{mix}}}
\newcommand{\close}{\mathrm{Close}}
\newcommand{\Cavoidi}{X^{\mathbf{avoid}}_{I}}
\newcommand{\pvec}[1]{\vect{p}_{#1}}
\newcommand{\vect}{\mathbf}
\newcommand{\eps}{\varepsilon}
\newcommand{\pcomment}[3]{\textcolor{#1}{\textit{\{#2: #3\}}}}
\newcommand{\ellie}[1]{\pcomment{NAqua}{Ellie}{#1}}
\newcommand{\ru}{u}
\newcommand{\rv}{v}
\newcommand{\supp}{\mathrm{supp}}
\newcommand{\dGHP}{d_{\mathrm{GHP}}}
\newcommand{\dGP}{d_{\mathrm{GP}}}
\newcommand{\GHP}{\mathrm{GHP}}
\newcommand{\GP}{\mathrm{GP}}
\newcommand{\K}{\chi}
\newcommand{\dumr}{r_{\ell}}
\newcommand{\dumep}{\eps_{\ell}}
\newcommand{\dumk}{k_{\ell}}
\newcommand{\fp}{\zeta}
\newcommand{\secp}{\psi}
\newcommand{\Engood}{\mathcal{E}_{j}}
\newcommand{\EngoodBuf}{\mathcal{E}^{\mathrm{ind}}_{j}}
\newcommand{\Ijbuf}{Y_j}
\newcommand{\Enc}{\mathcal{E}_{n,c,\eps}}
\newcommand{\sunk}{\odot}
\newcommand{\Bsun}{B^{\sunk}}
\newcommand{\Bj}{B^{\sunk}_j}
\newcommand{\Bjj}{B^{\sunk}_{j-1}}
\newcommand{\Bz}{B^{\sunk}_0}
\newcommand{\Gj}{G_n(\Wj)}
\renewcommand\@dotsep{10000}
\begin{document}

\title{The GHP scaling limit of uniform spanning trees in high dimensions}
\author{Eleanor Archer, Asaf Nachmias, Matan Shalev} 

\date{\today}
	\maketitle
	\begin{abstract} We show that the Brownian continuum random tree is the Gromov-Hausdorff-Prohorov scaling limit of the uniform spanning tree on high-dimensional graphs including the $d$-dimensional torus $\ZZ_n^d$ with $d>4$, the hypercube $\{0,1\}^n$, and transitive expander graphs. Several corollaries for associated quantities are then deduced: convergence in distribution of the rescaled diameter, height and simple random walk on these uniform spanning trees to their continuum analogues on the continuum random tree. 
	\end{abstract}
	
	\section{Introduction} Consider the uniform spanning tree (UST) of the $d$-dimensional torus $\ZZ_n^d$ with $d>4$ or another transitive high-dimensional graph such as the hypercube $\{0,1\}^n$ or a transitive expander graph. In this paper we show that the Brownian continuum random tree (CRT), introduced by Aldous \cite{AldousCRTI, AldousCRTII}, is the Gromov-Hausdorff-Prohorov (GHP) scaling limit of such USTs.

	Convergence of such USTs to the CRT in the sense of finite dimensional distributions has been established in the work of Peres and Revelle \cite{PeresRevelleUSTCRT}. The novelty of the current paper is proving that this convergence holds in the stronger GHP topology. This implies the convergence in distribution of some natural geometric quantities of the USTs (which were not known to converge prior to this work) and allows us to express their limiting distribution explicitly. For example, it follows from our work that the diameter and the height seen from a random vertex of these USTs, properly rescaled, converge to certain functionals of the Brownian excursion, as predicted by Aldous (see \cite[Section 4]{AldousCRTII}). Additionally, it implies that the simple random walk on these USTs converges to Brownian motion on the CRT. We discuss these implications in Section \ref{sctn:corollaries of main thm}.

Our main result is as follows.	

	\begin{theorem} \label{thm:main1} Let $\T_n$ be a uniformly drawn spanning tree of the $d$-dimensional torus $\ZZ_n^d$ with $d>4$. Denote by $d_{\T_n}$ the corresponding graph-distance on $\T_n$ and by $\mu_n$ the uniform probability measure on the vertices of $\T_n$. Then there exists a constant $\beta(d)>0$ such that
		\begin{equation} \label{eq:mainthm1} \left(\T_n,\frac{d_{\T_n}}{\beta(d) n^{d/2}} ,\mu_n\right) \overset{(d)}{\longrightarrow} (\T,d_{\T},\mu)\end{equation}
		where $(\T,d_{\T},\mu)$ is the CRT equipped with its canonical mass measure $\mu$ and $\overset{(d)}{\longrightarrow}$ means convergence in distribution with respect to the GHP distance between metric measure spaces.
	\end{theorem}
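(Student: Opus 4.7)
The plan is to upgrade the finite-dimensional convergence of Peres-Revelle \cite{PeresRevelleUSTCRT} to full GHP convergence via a uniform-in-$n$ tightness estimate controlling how well $\T_n$ is approximated by the sub-tree it induces on finitely many uniform vertices.

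Concretely, sample $k$ i.i.d.\ $\mu_n$-vertices $x_1,\ldots,x_k$ of $\T_n$ and let $\mathcal{R}_k(\T_n)$ denote the sub-tree of $\T_n$ they span (rooted at $x_1$); write $\mathcal{R}_k(\T)$ for the analogous reduced sub-tree in the CRT. Peres-Revelle establish, for each fixed $k$, the convergence of $(\mathcal{R}_k(\T_n),\tfrac{d_{\T_n}}{\beta(d)\,n^{d/2}},x_1,\ldots,x_k)$ in distribution to $(\mathcal{R}_k(\T),d_\T,X_1,\ldots,X_k)$. A standard criterion for scaling limits of random real trees (going back to Aldous) promotes this fdd statement to GHP convergence of the full trees provided one shows the tightness bound
\[
\lim_{k\to\infty}\limsup_{n\to\infty}\pr\!\left(\sup_{v\in\T_n}\frac{d_{\T_n}(v,\mathcal{R}_k(\T_n))}{\beta(d)\,n^{d/2}}>\eps\right)=0\qquad\text{for every }\eps>0.
\]
The analogous measure-tightness statement (that the $\mu_n$-mass at distance $>\eps\beta(d)n^{d/2}$ from $\mathcal{R}_k(\T_n)$ vanishes uniformly in $n$ as $k\to\infty$) follows from this by exchangeability: the expected $\mu_n$-mass at large distance equals the probability that a fresh $\mu_n$-sample lands far from $\mathcal{R}_k(\T_n)$.

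The heart of the argument is the maximum-bush-depth estimate displayed above, which I would prove using Wilson's algorithm rooted at $\mathcal{R}_k(\T_n)$. In this construction, conditional on $\mathcal{R}_k(\T_n)$, the rest of $\T_n$ is generated by running, for each remaining vertex $v$, a simple random walk from $v$ until it hits $\mathcal{R}_k(\T_n)$ and loop-erasing; the quantity $L(v):=d_{\T_n}(v,\mathcal{R}_k(\T_n))$ is exactly the length of the resulting LERW. On $\ZZ_n^d$ with $d>4$, the reduced sub-tree $\mathcal{R}_k(\T_n)$ has total length (hence capacity, by high-dimensional LERW capacity bounds) of order $\sqrt{k}\,n^{d/2}$, so for a typical $v$ hitting-time estimates should yield $\E[L(v)]\asymp n^{d/2}/\sqrt{k}$. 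The route to the displayed tightness bound should combine polynomial tail estimates for LERW lengths to sets of prescribed capacity in $\ZZ_n^d$ (drawn from Lawler's theory and more recent high-dimensional UST/LERW work) with the conditional independence of distinct bushes given the spine that is inherent in Wilson's algorithm.

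The principal obstacle is the uniform control on $\max_{v\in\T_n}L(v)$. A naive union bound over all $n^d$ vertices is too wasteful to combine with polynomial tails on $L(v)$, so the proof must exploit the geometry of the bush forest: bushes are grown by conditionally independent LERWs and the effective number of ``tips'' is far smaller than $n^d$. I expect a peeling or multiscale argument, iteratively removing the largest bush and re-running Wilson on the remainder, to amplify pointwise tail bounds into the required joint maximum bound. Once the tightness is established on the torus, the hypercube and transitive expander cases should follow by plugging in the corresponding high-dimensional random walk inputs, without essentially changing the structure of the argument.
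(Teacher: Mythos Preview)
Your overall architecture is sound and recognizably close to the paper's: start from Peres--Revelle's finite-dimensional convergence and add a global tightness estimate to upgrade to GHP. However, the paper does not go through the Hausdorff approximation criterion you propose (control of $\sup_v d_{\T_n}(v,\mathcal{R}_k(\T_n))$). Instead it uses the Athreya--L\"ohr--Winter criterion: GP convergence together with a \emph{lower mass bound}, namely tightness of $\max_v n\,|B_{\T_n}(v,c\sqrt{n})|^{-1}$ for each fixed $c>0$. These two criteria are essentially equivalent (small balls with uniformly positive mass force $k$ i.i.d.\ points to be an $\eps$-net, and conversely), so at the level of what must actually be proved your route and the paper's coincide. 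In both formulations the enemy is exactly the one you name: a uniform-in-$v$ estimate that a crude union bound over $n^d$ vertices cannot reach.

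Where your proposal has a genuine gap is in the sentence ``I expect a peeling or multiscale argument\ldots\ to amplify pointwise tail bounds into the required joint maximum bound.'' This is precisely the heart of the matter, and the paper devotes most of its work to it. Their mechanism is a dyadic bootstrap on ball volumes: if some $x$ has $|B(x,r)|$ small, then either $|B(x,r/2)|$ is even smaller, or there are of order $|B(x,r/2)|$ vertices $v$ with $|B(v,r/2)|$ small; iterating $O(\log n)$ times reduces to a pointwise estimate which is then taken by union bound. For this to close one needs the single-vertex tail to be \emph{sub-polynomial} in the smallness parameter at every dyadic scale down to roughly $n^{1/2-\alpha}$. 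The paper obtains this via careful capacity estimates for sub-segments of loop-erased paths (their \cref{thm:gammaisnice} and \cref{thm:capacityofprefix}), together with a negative-correlation lemma for subtree volumes; a subtle point is that the naive ``LERW from $x$ to a root'' has only a polynomial lower tail (the branch can be short with probability $\asymp\eps$), which they circumvent by first laying down a backbone $\Gamma_n$ between two uniform points and extending short branches into it. Your bush-depth formulation would face the same obstruction: the LERW from $v$ to $\mathcal{R}_k(\T_n)$ can be short, and you would need the tail of $L(v)$ being \emph{large} to decay faster than any polynomial, which is not what standard high-dimensional LERW tail bounds give without additional structure. So the proposal identifies the right difficulty but does not supply the idea that resolves it; the paper's bootstrap-plus-capacity machinery is that missing idea.
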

	
	\begin{remark}
		We take the convention of Aldous \cite[Section 2]{AldousCRTII} that the CRT is coded by two times standard Brownian excursion, although different normalizations are sometimes used elsewhere in the literature.
	\end{remark}

	Our result shows that high-dimensional USTs exhibit a strong form of \emph{universality}, a common phenomenon in statistical physics whereby above an \emph{upper critical dimension}, the macroscopic behaviour of a system does not depend on the finer properties of the underlying network. For USTs the upper critical dimension is well-known to be four as for the closely related model of \emph{loop-erased random walk} (LERW). Above  dimension four LERW rescales to Brownian motion, see \cite{LawlerIntersectionsBook}. In lower dimensions the scaling limits are markedly different. On $\ZZ^2$ it was shown by Lawler, Schramm and Werner \cite{lawler2011conformal} that LERW rescales to $\mathrm{SLE}_2$, and Barlow, Croydon and Kumagai \cite{BCK20172d} later established subsequential GHP scaling limits for the UST. This was later extended to full convergence in a result of Holden and Sun \cite{holden2018sle}.
	On $\ZZ^3$, much less is known, however the breakthrough works of Kozma \cite{kozma2007scaling} and Li and Shiraishi \cite{li2018convergence} on subsequential scaling limits of LERW enabled Angel, Croydon, Hernandez-Torres and Shiraishi \cite{ACHTS20203d} to show GHP convergence of the rescaled UST along a dyadic subsequence. Their scaling factors are given in terms of the LERW growth exponent in three dimensions, which was shown to exist by Shiraishi \cite{Shir18}. Finally, in four dimensions, a classical result of Lawler \cite{LawlerLERW4d} computes the logarithmic correction to scaling under which the LERW on $\ZZ^4$ converges to Brownian motion. Schweinsberg \cite{schweinsberg} showed that with these logarithmic corrections to scaling, the finite-dimensional distributions of the UST on the four dimensional torus converge to those of the CRT, analogously to \cite{PeresRevelleUSTCRT}. Various exponents governing the shape of the UST in $\ZZ^4$ are given in the recent work of Hutchcroft and Sousi \cite{hutchcroftsousi}. Our proof of GHP convergence does not encompass the four dimensional torus (see \cref{open:4d}).

	In the rest of this section we first present the standard notation and definitions required to parse \cref{thm:main1}. We then state the most general version of our result, \cref{thm:maingeneral}, handling other high-dimensional underlying graphs such as expanders and the hypercube. We close this section with a discussion of the various corollaries mentioned above and the organization of the paper.

	\subsection{Standard notation and definitions}\label{sec:defs}
	
	A {\bf spanning tree} of a connected finite graph $G$ is a connected subset of edges touching every vertex and containing no cycles. The {\bf uniform spanning tree} (UST) is a uniformly drawn sample from this finite set. Given a tree $T$ we denote by $d_T$ the graph distance metric on the vertices of $T$, i.e., $d_T(u,v)$ is the number of edges in the unique path between $u$ and $v$ in $T$.
	
	We follow the setup of \cite[Sections 1.3 and 6]{MiermontTessellations} and work in the space $\XX_c$ of equivalence classes of (deterministic) metric measure spaces (mm-spaces) $(X,d,\mu)$ such that $(X,d)$ is a compact metric space and $\mu$ is a Borel probability measure on $(X,d)$, where we treat $(X,d,\mu)$ and $(X',d',\mu')$ as equivalent if there exists a bijective isometry $\phi: X \to X'$ such that $\phi_* \mu = \mu'$ where $\phi_*\mu$ is the pushforward measure of $\mu$ under $\phi$. As is standard in the field, we will abuse notation and represent an equivalence class in $\XX_c$ by a single element of that equivalence class. 
	
	We will now define the GHP metric on $\XX_c$. First recall that if $(X, d)$ is a metric space, the {\bf Hausdorff distance} $d_H$ between two sets $A, A' \subset X$ is defined as
	\[
	d_H(A, A') = \max \{ \sup_{a \in A} d(a, A'), \sup_{a' \in A'} d(a', A) \}.
	\]
	Furthermore, for $\epsilon>0$ and $A\subset X$ we let $A^{\epsilon} = \{ x \in X: d(x,A) < \epsilon \}$ be the $\epsilon$-fattening of $A$ in $X$. If $\mu$ and $\nu$ are two measures on $X$, the {\bf Prohorov distance} between $\mu$ and $\nu$ is given by
	\[
	d_P(\mu, \nu) = \inf \{ \epsilon > 0: \mu(A) \leq \nu(A^{\epsilon}) + \epsilon \text{ and } \nu(A) \leq \mu(A^{\epsilon}) + \epsilon \text{ for any closed set } A \subset X \}.
	\]
	
	\begin{definition} \label{def:GHP} Let $(X,d,\mu)$ and $(X',d',\mu')$ be elements of $\XX_c$. The \textbf{Gromov-Hausdorff-Prohorov} {\rm (GHP)} distance between $(X,d,\mu)$ and $(X',d',\mu')$ is defined as
		\[
		\dGHP((X,d,\mu),(X',d',\mu')) = \inf \left\{d_H (\phi(X), \phi'(X')) \vee d_P(\phi_* \mu, \phi_*' \mu') \right\},
		\]
		where the infimum is taken over all isometric embeddings $\phi: X \rightarrow F$, $\phi': X' \rightarrow F$ into some common metric space $F$.
	\end{definition}
	It is shown in \cite[Theorem 6 and Proposition 8]{MiermontTessellations} that $(\XX_c, \dGHP)$ is a Polish metric space. Denote by $\M_1(\XX_c^{\GHP})$ the set of probability measures on $(\XX_c,\dGHP)$ with the Borel $\sigma$-algebra. We say that a sequence of probability measures $\{\pr_n\}_{n=1}^\infty \subset \M_1(\XX_c^{\GHP})$ {\bf converges in distribution} to $\pr \in \M_1(\XX_c^{\GHP})$ if for any bounded continuous function $f:(\XX_c, \dGHP) \to \RR$ we have $\lim_{n} \E_n f = \E f$, where $\E_n$ and $\E$ are the expectation operators corresponding to $\pr_n$ and $\pr$. As usual, if $\{X_n\}$ and $X$ are random variables taking values in $(\XX_c, \dGHP)$, we say that $X_n \overset{(d)}{\longrightarrow} X$ if the corresponding pushforward measures of $X_n$ converge in distribution to that of $X$.
	
	The CRT is a typical example of a random fractal tree and can be thought of as the scaling limit of critical (finite variance) Galton-Watson trees. As we shall explain in Section \ref{sec:high-level}, we do not directly approach the CRT in this paper; therefore we have opted to omit the definition of the CRT and refer the reader to Le Gall's comprehensive survey \cite{legall2005randomtreesandapplications} for its construction (see also \cite{AldousCRTII}) as a random element in $\XX_c$. Except for this, by now we have stated all the necessary definitions required for \cref{thm:main1}.

	\subsection{The general theorem}
	We now present the general version of \cref{thm:main1} which will imply the GHP convergence of the UST on graphs like the hypercube $\{0,1\}^m$ or transitive expanders. Our assumptions on the underlying graph are stated in terms of random walk behavior but should be thought of as geometric assumptions. For a graph $G$, two vertices $x,y$ and a non-negative integer $t$ we write $p_t(x,y)$ for the probability that the lazy random walk starting at $x$ will be at $y$ at time $t$. When $G$ is a finite connected regular graph on $n$ vertices we define the {\bf uniform mixing time} of $G$ as
	\begin{equation} \label{def:tmix}
	\tmix(G) = \min\left\{t\geq 0: \max_{x,y\in G} \left| n p_t(x,y)  - 1\right| \leq \frac{1}{2}\right\}, 
	\end{equation}
	We will assume the following throughout the paper. This is the same assumption under which Peres and Revelle establish finite-dimensional convergence in \cite{PeresRevelleUSTCRT}.

	\begin{assumption}\label{assn:main}
		Let $\left\{ G_n \right\}$ be a sequence of finite connected vertex transitive graphs with $|G_n| = n$.  
		\begin{enumerate}
			\item There exists $\theta < \infty$ such that $\displaystyle \sup_n \sup_{x \in G_n} \sum_{t=0}^{\sqrt{n}} (t+1) p_t(x,x) \leq \theta$;
			\item There exists $\alpha > 0$ such that $\tmix (G_n) = o (n^{\frac{1}{2} - \alpha})$ as $n \to \infty$.
		\end{enumerate}
	\end{assumption}
	
Both items in \cref{assn:main} imply that the graph sequence is in some sense of dimension greater than four. The first item is a finite analogue of the condition that the expected number of intersections of two independent random walks is finite; in $\ZZ^d$ this happens if and only if $d>4$. The second item (which clearly holds on the torus on $n$ vertices once $d>4$, since this has mixing time of order $n^{2/d}$) heuristically ensures that different parts of the $\UST$ that are distance $\sqrt{n}$ apart behave asymptotically independently. We do not claim that these conditions are optimal (see the discussion in \cite[Section 1.4]{MNS}), but they are enough to yield convergence to the CRT in the most interesting cases.

	\begin{theorem}\label{thm:maingeneral}
		Let $\{ G_n \}$ be a sequence of graphs satisfying \cref{assn:main} and let $\T_n$ be a sample of $\UST(G_n)$. Denote by $d_{\T_n}$ the graph distance on $\T_n$ and by $\mu_n$ the uniform probability measure on the vertices of $\T_n$. Then there exists a sequence $\{\beta_n\}$ satisfying $0<\inf_n \beta_n \leq \sup_n \beta_n < \infty$ such that  
		\begin{equation*}
		\left(\T_n,\frac{d_{\T_n}}{\beta_n \sqrt{n}}, \mu_n \right) \overset{(d)}{\longrightarrow} \left( \T, d_{\T}, \mu \right)
		\end{equation*}
		where $(\T,d_{\T},\mu)$ is the {\rm CRT} equipped with its canonical mass measure $\mu$ and $\overset{(d)}{\longrightarrow}$ means convergence in distribution with respect to the {\rm GHP} distance.
	\end{theorem}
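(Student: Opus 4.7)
The plan is to upgrade the finite-dimensional convergence of Peres and Revelle \cite{PeresRevelleUSTCRT} (which they establish under the identical \cref{assn:main}) to full GHP convergence. The finite-dimensional statement --- that the reduced subtree spanned by $k$ uniform vertices of $\T_n$, with edge lengths rescaled by $\beta_n \sqrt{n}$, converges in law to the reduced $k$-point subtree of the CRT --- is proved via Wilson's algorithm, by revealing the tree as concatenated loop-erased random walk (LERW) branches. \cref{assn:main}(1) controls the expected number of LERW self-intersections (so LERW length concentrates on the scale $\beta_n \sqrt{n}$ for a suitable choice of $\beta_n$), and \cref{assn:main}(2) provides the near-independence of branches needed for joint convergence.

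To upgrade to GHP, the standard route is to supplement fdd convergence with two additional inputs: tightness of the rescaled diameter $\sup_n \E[\diam(\T_n)/(\beta_n \sqrt{n})] < \infty$, and a \emph{global lower mass bound}: for every $\epsilon,\eta>0$ there exists $\delta=\delta(\epsilon,\eta)>0$ such that
\[
\pr\!\left( \inf_{v \in \T_n} \mu_n\bigl(B_{\T_n}(v,\epsilon \beta_n \sqrt{n})\bigr) \geq \delta \right) \geq 1-\eta,
\]
where $B_{\T_n}$ is a ball in the tree metric. Together with fdd convergence these imply GHP convergence to the CRT by a standard criterion (as in the Addario-Berry / Haas-Miermont framework): the lower mass bound precludes ``long thin branches'' of the UST that might contribute to Hausdorff distance but carry negligible mass, which is exactly the obstruction that separates GHP from Gromov-Prohorov convergence.

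To prove the global lower mass bound I would fix $v$ and use Wilson's algorithm rooted at $v$, so that the tree is built as LERWs launched from independent uniform starting points and stopped on hitting the partial tree. The mass $\mu_n(B_{\T_n}(v,\epsilon \beta_n \sqrt{n}))$ is essentially the normalized size of the ``bush'' of vertices whose LERW attaches to the partial tree within graph distance $\epsilon \beta_n \sqrt{n}$ of $v$. The goal is to show this bush has $\gtrsim \delta n$ vertices with probability at least $1-\eta/n$, so that vertex-transitivity and a union bound over $v \in G_n$ close the estimate. The two quantitative inputs are: (i) a LERW of rescaled length $\epsilon$ near $v$ has enough $\Capp$ that an independent random walk of length $\sqrt{n}$ from uniform start hits it with probability bounded below, which follows from \cref{assn:main}(1); and (ii) the ``bushes'' rooted at $\sqrt{n}$-separated portions of the skeleton behave approximately independently, which follows from \cref{assn:main}(2) and permits a second-moment/concentration argument.

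The main obstacle is obtaining super-polynomial concentration of bush sizes around their means, uniformly in $v$, strong enough to survive a union bound over $n$ vertices. This requires a delicate iterative construction in which the LERW branches forming the skeleton and the associated bushes are revealed in batches, coupled at each stage to auxiliary independent random walks $B^{\mathrm{ind}}$ on small-probability error events, so that true near-independence is preserved across scales while the capacity estimates are maintained. Developing this scheme --- reflected in the macros $\bush$, $\Capp$, $\close$, $\Engood$ and in the reference to the authors' earlier work \cite{MNS} --- is the technical heart of the argument and the place where high-dimensionality (in the form of \cref{assn:main}) is used most heavily.
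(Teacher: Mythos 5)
Your high-level route --- take the Peres--Revelle finite-dimensional convergence, identify it as Gromov-weak convergence, and supplement it with a global lower mass bound to extract GHP convergence --- is the same as the paper's (which invokes the Athreya--L\"ohr--Winter criterion from \cite{AthreyaLohrWinterGap} rather than a diameter-plus-mass argument; GP convergence already makes the diameter tight, so no separate input is needed). The proposal goes wrong, however, at the heart of the argument: the strategy for proving the lower mass bound.

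You propose to show that for a single vertex $v$, the event $\{|B_{\T_n}(v,c\sqrt{n})|\le\eps n\}$ has probability at most $\eta/n$ and then union bound over the $n$ vertices. This is impossible for fixed $c$ and $\eps>0$: by \cref{thm:pr04} itself, $\pr\bigl(|B_{\T_n}(v,c\sqrt{n})|\le\eps n\bigr)$ converges as $n\to\infty$ to the strictly positive probability that a $\mu$-random point of the CRT has a ball of radius $c$ with mass at most $\eps$. No amount of refined concentration can push a quantity with a positive limit below $\eta/n$. The paper itself flags this in \cref{sctn:bootstrap}: even after conditioning on a favorable backbone, the single-vertex tail has an order-$\eps$ contribution from the event that the LERW branch from $v$ is short, so it is polynomial in $\eps$, not $o(1/n)$.

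What is missing is the \emph{bootstrap across dyadic scales}, which is the central new idea of the paper and not present in your outline. The paper introduces radii $r_\ell = c\sqrt n/2^\ell$ and thresholds $\eps_\ell = \eps/4^\ell$ for $\ell=0,\dots,N_n$ with $N_n=\Theta(\log n)$, and decomposes the bad event $\{\exists v:\ |B(v,r_0)|\le\eps_0 r_0^2\}$ across events $A_\ell=\{\exists v: |B(v,r_\ell)|\le\eps_\ell r_\ell^2,\ |B(v,r_{\ell+1})|\ge\eps_{\ell+1}r_{\ell+1}^2\}$ together with a terminal event $B_{N_n}$. On $A_\ell$ there are automatically many ($\ge\eps_{\ell+1}r_{\ell+1}^2$) vertices with small balls at scale $\ell+1$, so Markov's inequality replaces the union bound and only requires the per-vertex tail to beat $\eps_{\ell+1}r_{\ell+1}^2/n$, not $1/n$; and the union bound over $n$ vertices is applied only at the finest scale $\ell=N_n$, where $\eps_{N_n}=\eps n^{-\alpha/5}\to 0$ makes the tail super-polynomially small in $n$. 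This in turn forces the per-vertex tail to be \emph{sub-polynomial in $\eps_\ell$} (the paper obtains $e^{-a(\log\eps_\ell^{-1})^2}$ via \cref{thm:capacityofprefix} and \cref{lem:volLBfromCap}), which is exactly why the short-branch obstruction must be removed by pre-sampling a backbone $\Gamma_n$ between two uniform points and conditioning on the niceness event $\Enc$ (\cref{thm:gammaisnice}), and why the negative-correlation inequality \cref{lem: negative correlation} is needed to turn per-segment Chebyshev bounds into exponential concentration of the full ball volume. None of this machinery is optional; without the multi-scale decomposition the union bound you describe has nothing to attach to.
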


The sequence $\{\beta_n\}$ is inherited from the main result of Peres and Revelle, see \cite[Theorem 1.2]{PeresRevelleUSTCRT} (we restate this as \cref{thm:pr04} in this paper). 
Note that \cref{thm:main1} is not a special case of \cref{thm:maingeneral} since the latter does not guarantee a single scaling factor $\beta$, rather a sequence $\beta_n$ (which is the best one can hope for in the context of \cref{thm:maingeneral} since one can alternate between different graph sequences). 

\begin{proof}[Proof of \cref{thm:main1} given \cref{thm:maingeneral}]
For the torus $\ZZ_n^d$ with $d\geq 5$, Peres and Revelle proved that there exists $\beta(d)\in(0,\infty)$ such that \cite[Theorem 1.2]{PeresRevelleUSTCRT} holds with $\beta_n=\beta(d)$; see the choice of $\beta_n$ at the end of Section 3 of \cite{PeresRevelleUSTCRT} as well as Lemma 8.1 and (17) in that paper. Hence, this and \cref{thm:maingeneral} readily imply \cref{thm:main1}.
\end{proof}

Furthermore, see Lemma 1.3 and Section 9 of \cite{PeresRevelleUSTCRT}, in graphs where additionally two independent simple random walks typically avoid one another for long enough (see the precise condition in \cite[Equation 6]{PeresRevelleUSTCRT}), we can take $\beta_n \equiv 1$. This family of graphs includes the hypercube and transitive expanders with degrees tending to infinity. In the same spirit, for the $d$-dimensional torus, $\beta(d) \to 1$ as $d\to\infty$. Moreover, it is also immediate to see that \cref{assn:main} holds for a sequence of bounded degree transitive expanders (see for instance \cite[Section 9]{PeresRevelleUSTCRT}) and hence \cref{thm:maingeneral} holds for them as well.

	\subsection{Corollaries}\label{sctn:corollaries of main thm}
\subsubsection{Pointed convergence}
In order to establish some of the corollaries alluded to above, it will be useful to rephrase \cref{thm:maingeneral} in terms of \textit{pointed convergence}. Roughly speaking, this means that we consider our spaces to be rooted, and we add a term corresponding to the distance between the roots in the embedding in \cref{def:GHP}. We refer to \cite[Section 2.2]{Croydon18Scaling} for the precise definition. We start with the following observation, which is a trivial consequence of a coupling characterization of the Prohorov distance (see \cite[Proof of Proposition 6]{MiermontTessellations}).

\begin{lemma}\label{lem:pointed GHP conv}
Suppose that $(X_n, d_n, \mu_n)_{n \geq 1}, (X, d, \mu)$ are in $\XX_c$ with $(X_n, d_n, \mu_n) \to (X, d, \mu)$ deterministically in the GHP topology. Let $U_n$ be a random element of $X_n$ sampled according to the measure $\mu_n$, and $U$ be a random element of $X$ sampled according to the measure $\mu$. Then
\[
(X_n, d_n, \mu_n, U_n) \overset{(d)}{\to} (X, d, \mu, U)
\]
with respect to the \textit{pointed} GHP topology, as defined in \cite[Section 2.2]{Croydon18Scaling}.
\end{lemma}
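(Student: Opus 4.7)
The plan is to invoke the Strassen-type coupling characterization of the Prohorov distance---namely that $d_P(\mu,\nu)\le \eta$ on a Polish metric space if and only if there is a coupling $(V,W)$ of $\mu$ and $\nu$ with $\pr(d(V,W)>\eta)\le \eta$---in order to construct, for each $n$, an explicit coupling of the two pointed spaces whose pointed GHP distance is small with high probability. This is precisely the tool alluded to by the reference to \cite[Proof of Proposition 6]{MiermontTessellations}.

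Concretely, I would first use the assumed deterministic GHP convergence to pick, for each $n$, isometric embeddings $\phi_n\colon X_n\to F_n$ and $\psi_n\colon X\to F_n$ into a common metric space (which we may take to be Polish, for instance the union of the two embedded images with the inherited metric) satisfying
\[
\eta_n \ := \ d_H\bigl(\phi_n(X_n),\psi_n(X)\bigr) \ \vee \ d_P\bigl((\phi_n)_*\mu_n,(\psi_n)_*\mu\bigr) \ \longrightarrow \ 0.
\]
Then applying the coupling characterization of $d_P$ inside $F_n$ produces a coupling $(V_n,W_n)$ of $(\phi_n)_*\mu_n$ and $(\psi_n)_*\mu$ with $\pr(d_{F_n}(V_n,W_n)>\eta_n)\le \eta_n$, and pulling it back via $U_n:=\phi_n^{-1}(V_n)$ and $U_n':=\psi_n^{-1}(W_n)$ yields $U_n\sim \mu_n$ and $U_n'\sim \mu$ with $d_{F_n}(\phi_n(U_n),\psi_n(U_n'))\le \eta_n$ off an event of probability at most $\eta_n$.

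Finally, unpacking the pointed GHP distance---which simply augments the infimum in \cref{def:GHP} with the distance between the two embedded roots---I would conclude that on this good event $\dGHP^{\mathrm{pt}}\bigl((X_n,d_n,\mu_n,U_n),(X,d,\mu,U_n')\bigr)\le \eta_n$. Since $U_n'\overset{d}{=}U$, this exhibits a coupling under which $(X_n,d_n,\mu_n,U_n)$ converges in probability to a copy of $(X,d,\mu,U)$ in the pointed GHP metric, which yields the stated convergence in distribution. The only mild issue to check is that $F_n$ is Polish so that Strassen's theorem is applicable, but this is automatic in the setup above; otherwise the argument is just a direct unpacking of the definitions, so no substantial obstacle arises.
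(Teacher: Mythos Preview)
Your proposal is correct and is exactly the argument the paper has in mind: the paper does not spell out a proof but simply remarks that the lemma is ``a trivial consequence of a coupling characterization of the Prohorov distance'' and cites \cite[Proof of Proposition 6]{MiermontTessellations}, which is precisely the Strassen-type coupling you invoke. Your write-up is a faithful unpacking of this hint, with the same embeddings-and-coupling structure, so there is nothing to add.
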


Due to transitivity, in our setting the root can be an arbitrary vertex $O_n$ rather than uniformly chosen. Combining \cref{thm:maingeneral} with \cref{lem:pointed GHP conv} and Skorohod representation theorem we deduce the following.

\begin{theorem}[Pointed convergence]\label{thm:maingeneralpointed}
Let $\{ G_n \}$ be a sequence of graphs satisfying \cref{assn:main}, let $\T_n$ be a sample of $\UST(G_n)$ and let $O_n$ be an arbitrary vertex of $G_n$.  Denote by $d_{\T_n}$ the graph distance on $\T_n$ and by $\mu_n$ the uniform probability measure on the vertices of $\T_n$. Then there exists a sequence $\{\beta_n\}$ satisfying $0<\inf_n \beta_n \leq \sup_n \beta_n < \infty$ such that  
    \begin{equation*}
        	\left(\T_n,\frac{d_{\T_n}}{\beta_n \sqrt{n}}, \mu_n, O_n \right) \overset{(d)}{\longrightarrow} \left( \T, d, \mu, O \right)
    \end{equation*}
where $(\T,d_{\T},\mu, O)$ is the {\rm CRT} equipped with its canonical mass measure $\mu$ and root $O$, and $\overset{(d)}{\longrightarrow}$ means convergence in distribution with respect to the  pointed {\rm GHP} distance defined in \cite[Section 2.2]{Croydon18Scaling}.
\end{theorem}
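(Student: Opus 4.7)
The plan is to reduce \cref{thm:maingeneralpointed} to \cref{thm:maingeneral} by combining three ingredients: Skorohod representation, the pointed GHP upgrade from \cref{lem:pointed GHP conv}, and vertex-transitivity of $G_n$. Since the bulk of the geometric work is already contained in \cref{thm:maingeneral}, this proof should be a short chain of soft arguments with no new estimates.

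First, I would invoke Skorohod's representation theorem: because $(\XX_c,\dGHP)$ is Polish, the convergence in distribution of \cref{thm:maingeneral} can be realized on a common probability space as an almost-sure convergence of $(\T_n, d_{\T_n}/(\beta_n\sqrt n),\mu_n)$ to $(\T,d_\T,\mu)$ in the GHP topology. On this space, sample $U_n\sim\mu_n$ and $U\sim\mu$ (independently of everything else given the spaces, coupled in whatever way is convenient). Applying \cref{lem:pointed GHP conv} pointwise in $\omega$ yields
\[
(\T_n, d_{\T_n}/(\beta_n\sqrt n),\mu_n, U_n)\overset{(d)}{\longrightarrow}(\T,d_\T,\mu,U)
\]
in the pointed GHP topology, so the conclusion of \cref{thm:maingeneralpointed} holds provided we are allowed to take the root $U_n$ to be $\mu_n$-uniform.

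Second, the task reduces to showing that for each $n$, the joint law of $(\T_n, O_n)$ with $O_n$ any deterministic vertex of $G_n$ agrees with the joint law of $(\T_n, U_n)$ with $U_n$ uniform over $V(G_n)$ and independent of $\T_n$. This is where vertex-transitivity enters. Fix any two vertices $u,v\in V(G_n)$ and pick an automorphism $\phi$ of $G_n$ with $\phi(u)=v$. Because the law of $\UST(G_n)$ is invariant under graph automorphisms, $\phi(\T_n)\overset{(d)}{=}\T_n$, and therefore
\[
(\T_n,u)\overset{(d)}{=}(\phi(\T_n),\phi(u))=(\phi(\T_n),v)\overset{(d)}{=}(\T_n,v).
\]
Averaging $v$ over $V(G_n)$ shows $(\T_n,O_n)\overset{(d)}{=}(\T_n,U_n)$ for any fixed $O_n$. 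Combining with the previous paragraph yields the claim.

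The only subtlety—and the place where some care is warranted—is in the transitivity step: one needs the $\UST$ law to be strictly invariant under the automorphism group of $G_n$, which is immediate since $\UST(G_n)$ is the uniform measure on spanning trees and graph automorphisms act bijectively on spanning trees. Everything else is routine: Skorohod representation converts distributional GHP convergence into a.s.\ GHP convergence, \cref{lem:pointed GHP conv} upgrades a.s.\ GHP convergence with $\mu_n$-sampled roots to a.s.\ pointed GHP convergence, and translating back to convergence in distribution completes the proof.
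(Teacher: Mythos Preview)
Your proposal is correct and follows exactly the route indicated by the paper, which deduces the theorem in one line from \cref{thm:maingeneral}, \cref{lem:pointed GHP conv}, Skorohod representation, and vertex-transitivity. You have simply spelled out the details of that deduction (including the dominated-convergence step to pass from a.s.\ conditional pointed convergence back to distributional convergence), so there is nothing to add.
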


\subsubsection{Diameter distribution}
	The {\bf diameter} of a metric space $(X,d)$ is $\sup_{x,y\in X}d(x,y)$ and denoted by $\diam(X)$. When $X$ is a tree, it is just the length of the longest path. The study of the diameter of random trees has an interesting history. Szekeres \cite{szekeres} proved in 1982 that the diameter $D_n$ of a uniformly drawn labeled tree on $n$ vertices normalized by $n^{-1/2}$ converges in distribution to a random variable $D$ with the rather unpleasant density
	\begin{equation}\label{eq:yuckydist} f_D(y) = {\sqrt{2\pi}\over 3} \sum_{n\geq 1} e^{-b_{n,y}} \Big ( {64 \over y^2} ( 4b_{n,y}^4 - 36b_{n,y}^3 +75 b_{n,y}^2 -30b_{n,y}) + {16 \over y^2} (2b_{n,y}^3 - 5b_{n,y}^2) \Big ) \, ,\end{equation}
	where $b_{n,y} = 8(\pi n/y)^2$ and $y\in(0,\infty)$. Aldous \cite{AldousCRTI, AldousCRTII} showed that this tree, viewed as a random metric space, converges to the CRT and deduced that $D$ is distributed as 
	\begin{equation}\label{eq:prettydist} 2 \cdot \sup_{0 \leq t_1 < t_2 \leq 1} \big( e_{t_1} + e_{t_2} - 2 \inf_{t_1 \leq t \leq t_2} e_t \big )\, ,\end{equation}
	where $\{e_t\}_{t \in [0,1]}$ is standard Brownian excursion. Curiously enough, up until 2015 the only known way to show that \eqref{eq:prettydist} has density \eqref{eq:yuckydist} was to go via random trees and combine the Aldous and Szekeres results. Wang \cite{wang2015heightdiam}, prompted by a question of Aldous, gave a direct proof of this fact in 2015.
	
	A uniformly drawn labeled tree on $n$ vertices is just $\UST(K_n)$ where $K_n$ is the complete graph on $n$ vertices. Applying \cref{thm:maingeneral} we are able to extend Szekeres' 1983 result to USTs of any sequence of graphs satisfying \cref{assn:main}. 
	
	\begin{corollary} \label{cor:diam} Let $\{G_n\}$ be a sequence of graphs satisfying \cref{assn:main}, let $\T_n$ be a sample of $\UST(G_n)$ and let $\{\beta_n\}$ be the sequence guaranteed to exist by \cref{thm:maingeneral}. Then 
		$$ {\diam(\T_n) \over \beta_n n^{1/2}} \overset{(d)}{\longrightarrow} D \, ,$$
		where $D$ is the diameter of the {\rm CRT}, i.e., a random variable defined by either \eqref{eq:yuckydist} or \eqref{eq:prettydist}.
	\end{corollary}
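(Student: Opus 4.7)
The proof is a direct application of the continuous mapping theorem to the GHP convergence established in \cref{thm:maingeneral}. The crucial observation is that the diameter functional $\diam:\XX_c\to[0,\infty)$, defined by $(X,d,\mu)\mapsto\sup_{x,y\in X}d(x,y)$, is $2$-Lipschitz with respect to the Gromov--Hausdorff distance. Indeed, if $(X,d)$ and $(X',d')$ are isometrically embedded in a common space $F$ with Hausdorff distance at most $\eps$, then every pair of points in $X$ can be matched to a pair in $X'$ whose pairwise distance differs by at most $2\eps$, and symmetrically. Since $\dGHP\geq d_{\mathrm{GH}}$, the functional $\diam$ is in particular continuous with respect to the GHP topology on $\XX_c$.

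Because scaling the metric by a positive factor rescales the diameter by the same factor, we have
\[
\diam\!\left(\T_n,\frac{d_{\T_n}}{\beta_n\sqrt{n}},\mu_n\right)=\frac{\diam(\T_n)}{\beta_n\sqrt{n}}.
\]
Applying the continuous mapping theorem to \cref{thm:maingeneral} therefore yields
\[
\frac{\diam(\T_n)}{\beta_n\sqrt{n}}\overset{(d)}{\longrightarrow}\diam(\T,d_\T,\mu)=:D.
\]

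It remains to identify the law of $D$. Under the convention of \cite{AldousCRTII}, the CRT is coded by twice a standard Brownian excursion $e=\{e_t\}_{t\in[0,1]}$: namely $\T$ is the quotient of $[0,1]$ by the pseudo-metric $d_\T(s,t)=2\bigl(e_s+e_t-2\inf_{s\wedge t\leq u\leq s\vee t}e_u\bigr)$, with $\mu$ the pushforward of Lebesgue measure. Taking the supremum of $d_\T(s,t)$ over all $s,t\in[0,1]$ gives exactly the representation \eqref{eq:prettydist}. The equivalence of this law with the density \eqref{eq:yuckydist} can either be inferred from Szekeres' original computation \cite{szekeres} together with Aldous' convergence theorem, or be obtained directly from Wang \cite{wang2015heightdiam}. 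There is no real obstacle in this argument, since the substantive work is already contained in \cref{thm:maingeneral}; the only minor point is the continuity of $\diam$ in the GHP topology, which follows from its continuity in the weaker GH topology.
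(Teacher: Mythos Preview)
Your proof is correct and follows essentially the same approach as the paper: both show that $\diam$ is $2$-Lipschitz with respect to $\dGHP$ and then apply the continuous mapping theorem to the convergence of \cref{thm:maingeneral}. The paper phrases this via bounded continuous test functions $g\circ h$ rather than invoking the continuous mapping theorem by name, but the argument is the same.
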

	\begin{proof} Let $D_n = \diam(\T_n)$ and let $g:[0,\infty)\to\RR$ be bounded and continuous. The function $h:\XX_c\to \RR$ defined by $h((X,d,\mu))=\diam(X)$ is continuous with respect to the GHP topology; indeed, for any two metric spaces $X_1$ and $X_2$ we have $|\diam(X_1) - \diam(X_2)|\leq 2\dGHP(X_1,X_2)$. Thus the composition $g \circ h : \XX_c \to \RR$ is bounded and continuous. By \cref{thm:maingeneral} we conclude $\E \big [ g\circ h ((\T_n, {d_{\T} \over \beta_n \sqrt{n}},\mu_n) \big ] \to \E [g\circ h ((\T,d,\mu))]$ where $(\T,d,\mu)$ is the CRT. Therefore, $\E[g(D_n)] \to \E[g(D)]$ as required. 
	\end{proof}

	\subsubsection{Height distribution}
	Given a  rooted tree $(T,v)$, the {\bf height} of $(T,v)$ is $\sup_{x\in T} d(v,x)$, i.e. the length of the longest simple path in $T$ starting from $v$, and denoted by $\height(T,v)$. The study of the height of random trees predates the study of the diameter. In 1967, R\'enyi and Szekeres \cite{renyi1967height} found the limiting distribution of the height of a uniformly drawn labeled rooted tree on $n$ vertices normalized by $n^{-1/2}$; we omit the precise formula this time (it is also unpleasant). Aldous \cite{AldousCRTI, AldousCRTII} realized that the limiting distribution is that of the maximum of the Brownian excursion. 
	
	The following corollary is an immediate consequence of \cref{thm:maingeneralpointed}. The proof goes along the same lines as the proof of \cref{cor:diam}; we omit the details.
	
	\begin{corollary} \label{cor:height} Let $\{G_n\}$ be a sequence of graphs satisfying \cref{assn:main}, let $\T_n$ be a sample of $\UST(G_n)$ and let $\beta_n$ be the sequence guaranteed to exist by \cref{thm:maingeneral}. Let $v_n$ be an arbitrary vertex of $G_n$. Then 
		$$ {\height(\T_n,v_n) \over \beta_n n^{1/2}} \overset{(d)}{\longrightarrow} 2 \sup_{t\in [0,1]} e_t  \, ,$$
		where $\{e_t\}_{t\in [0,1]}$ is standard Brownian excursion.
	\end{corollary}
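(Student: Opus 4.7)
The plan is to mimic the proof of \cref{cor:diam}, substituting the pointed version \cref{thm:maingeneralpointed} for \cref{thm:maingeneral} and checking that the height functional is continuous on pointed compact mm-spaces.

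First, by vertex transitivity of $G_n$, the law of the pointed rescaled tree $(\T_n, d_{\T_n}/(\beta_n\sqrt{n}), \mu_n, v_n)$ does not depend on the choice of $v_n$, and in particular coincides with the law obtained by sampling $v_n$ from $\mu_n$. Hence \cref{thm:maingeneralpointed} directly delivers pointed GHP convergence to $(\T, d_\T, \mu, O)$, where $O$ is a $\mu$-random point of the CRT.

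Next I would introduce the height functional $h(X, d, \mu, v) = \sup_{x \in X} d(v, x)$ and verify that it is $2$-Lipschitz in the pointed GHP distance: if two pointed mm-spaces embed isometrically into a common metric space $F$ with Hausdorff distance at most $\epsilon$ and distance between the roots at most $\epsilon$, then for any point $x$ in one space the triangle inequality produces a corresponding point in the other whose distance from the second root differs from $d(v,x)$ by at most $2\epsilon$; exchanging roles and taking suprema yields the bound. Combining this with the continuous mapping theorem exactly as in the proof of \cref{cor:diam} then gives
\[
\frac{\height(\T_n, v_n)}{\beta_n \sqrt{n}} \overset{(d)}{\longrightarrow} \sup_{x \in \T} d_\T(O, x).
\]

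Finally, I would identify the distribution of the right-hand side using the re-rooting invariance of the CRT (see \cite[Section 2]{AldousCRTII} or \cite{legall2005randomtreesandapplications}): with $O\sim\mu$, $(\T, d_\T, \mu, O)$ has the same law as the CRT rooted at its canonical root $\rho$, the point coded by $e_0 = 0$. Under the Aldous normalization the CRT is coded by $2e$, so the distance from $\rho$ to the point coded by $t$ equals $2e_t$, and hence the height from $\rho$ is $2\sup_{t\in[0,1]} e_t$, as claimed. The re-rooting step is the only non-routine ingredient; both the continuity of $h$ and the reduction from \cref{thm:maingeneralpointed} are immediate adaptations of the diameter argument.
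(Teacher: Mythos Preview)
Your proposal is correct and follows precisely the route the paper indicates: it mirrors the proof of \cref{cor:diam} using \cref{thm:maingeneralpointed}, the continuity of the height functional on pointed compact mm-spaces, and the continuous mapping theorem. The paper omits the details, and the ones you supply---the $2$-Lipschitz bound for $h$ and the identification of $\sup_{x\in\T} d_\T(O,x)$ with $2\sup_t e_t$ via re-rooting invariance---are exactly the right ingredients.
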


\subsubsection{SRW on the UST converges to BM on the CRT}
A particularly nice application of \cref{thm:maingeneral} together with \cite[Theorem 1.2]{Croydon18Scaling} allows us to deduce that the simple random walk (SRW) on $\UST(G_n)$ rescales to Brownian motion on the CRT. The latter object was first defined by Aldous in \cite[Section 5.2]{AldousCRTII} and formally constructed by Krebs \cite{Krebs}.
In what follows, we let $P^{(O_n)}_n( \cdot )$ denote the (random) law of a discrete-time SRW on $\UST (G_n)$, started from $O_n$, and let $P^{(O)}( \cdot )$ denote the law of Brownian motion on the CRT as constructed by Krebs, started from $O$.

\begin{theorem}\label{thm:SRW}
Let $\{ G_n \}$ be a sequence of graphs satisfying \cref{assn:main}, let $\T_n$ be a sample of $\UST(G_n)$, and let $(X_n(m))_{m \geq 0}$ be a simple random walk on $\T_n$. 
Then there exists a probability space $\Omega$ on which the convergence of \cref{thm:maingeneralpointed} holds almost surely, and furthermore, on this probability space, for almost every $\omega \in \Omega$ the spaces $((\T_n, d_n, \mu_n, O_n))_{n \geq 1}$ and $(\T, d, \mu, O)$ can be embedded into a common metric space $(X', d')(\omega)$ so that 
\begin{equation}\label{eqn:SRW law conv}
P^{(O_n)}_n \left( \left(\frac{1}{\beta_n \sqrt{n}} X_n(2\beta_n n^{\frac{3}{2}}t)\right)_{t \geq 0} \in \cdot \right) \to P^{(O)}\left( (B_t)_{t \geq 0} \in \cdot \right)
\end{equation}
weakly as probability measures on the space $D(\RR^{\geq 0}, X'(\omega))$ of c\`adl\`ag functions equipped with the uniform topology.
\end{theorem}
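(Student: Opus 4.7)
The plan is to combine the pointed Gromov–Hausdorff–Prohorov convergence of \cref{thm:maingeneralpointed} with \cite[Theorem 1.2]{Croydon18Scaling}, a general result that converts pointed GHP convergence of a sequence of finite measured graph trees into convergence of the associated simple random walks to Brownian motion on the limiting compact real tree, provided the limiting measure is non-atomic and of full support. Since the pointed GHP space is Polish, the Skorohod representation theorem applied to \cref{thm:maingeneralpointed} produces a probability space $\Omega$ on which
\[
\left(\T_n,\frac{d_{\T_n}}{\beta_n\sqrt{n}},\mu_n,O_n\right) \longrightarrow (\T,d_{\T},\mu,O)
\]
almost surely in the pointed GHP topology. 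Fixing an $\omega$ for which this convergence holds, the definition of the GHP distance together with compactness allows one to embed all of $((\T_n, (\beta_n\sqrt{n})^{-1} d_{\T_n}, \mu_n, O_n))_{n \geq 1}$ and $(\T, d_{\T}, \mu, O)$ isometrically into a single compact metric space $(X'(\omega), d'(\omega))$ so that the supports converge in Hausdorff distance, the pushforward measures converge in Prohorov distance, and $O_n \to O$.

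Next, I would verify the hypotheses of \cite[Theorem 1.2]{Croydon18Scaling} on this almost sure event. The key observations are the following: for any finite tree the effective resistance metric coincides with the graph metric, so the required convergence of measured resistance spaces is immediate from the pointed GHP convergence just obtained; the CRT carries its canonical mass measure $\mu$, which is almost surely non-atomic and of full support, which is the standing non-degeneracy assumption under which the limiting resistance form produces the non-degenerate diffusion of \cite{Krebs} started from $O$; and the sandwich bounds $0 < \inf_n \beta_n \leq \sup_n \beta_n < \infty$ keep the rescaled diameters and total masses uniformly comparable, again in line with Croydon's framework. Invoking the theorem pathwise on $\Omega$ then yields weak convergence of the laws of the rescaled continuous-time random walks on $\T_n$ to the law of Brownian motion on $(\T,d_{\T},\mu,O)$ in the space $D(\RR^{\geq 0}, X'(\omega))$ with the uniform topology.

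The main obstacle is translating this continuous-time statement into the discrete-time form \eqref{eqn:SRW law conv} with the correct time rescaling $2\beta_n n^{3/2}$. A single discrete step of the SRW on $\T_n$ corresponds, under the edge-weighted resistance-form conventions Croydon uses, to time $(2|E(\T_n)|)^{-1} = (2(n-1))^{-1}$; combined with the spatial rescaling by $\beta_n\sqrt{n}$ and the commute time identity $\E_u[\tau_v]+\E_v[\tau_u] = 2(n-1)\, d_{\T_n}(u,v)$ valid on any tree, this produces exactly the factor $2\beta_n n^{3/2}$ in \eqref{eqn:SRW law conv}. The only additional care needed is to check that the deterministic (rather than $\mu_n$-sampled) starting point $O_n$ is admissible, which is precisely what the pointed version \cref{thm:maingeneralpointed} provides; once this bookkeeping is done, the claim follows.
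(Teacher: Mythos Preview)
Your approach is essentially the same as the paper's: apply Skorohod representation to \cref{thm:maingeneralpointed} and then invoke \cite[Theorem 1.2]{Croydon18Scaling} pathwise. There is, however, one step you gloss over that the paper makes explicit. Croydon's theorem requires convergence of the spaces equipped with the \emph{speed measure} of the walk, which for simple random walk on a graph is the normalized degree measure $\tfrac{1}{2(n-1)}\nu_n$ with $\nu_n(x)=\deg x$, not the uniform measure $\mu_n$ appearing in \cref{thm:maingeneralpointed}. The paper handles this by observing that on a tree with $n$ vertices
\[
\dGHP\Big(\big(\T_n,\tfrac{d_{\T_n}}{\beta_n\sqrt{n}},\mu_n\big),\big(\T_n,\tfrac{d_{\T_n}}{\beta_n\sqrt{n}},\tfrac{1}{2n}\nu_n\big)\Big)\le \tfrac{1}{\beta_n\sqrt{n}},
\]
so the degree-measured spaces also converge to the CRT by the triangle inequality, and only then applies Croydon's theorem. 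Your sentence ``the required convergence of measured resistance spaces is immediate from the pointed GHP convergence just obtained'' hides this measure swap; without it the hypotheses of \cite[Theorem 1.2]{Croydon18Scaling} are not literally met.

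For the passage from the continuous-time walk produced by Croydon's theorem to the discrete-time walk in \eqref{eqn:SRW law conv}, the paper argues via the strong law of large numbers for the exponential holding times together with continuity of the limiting Brownian motion; this is equivalent to your commute-time bookkeeping and recovers the same factor $2\beta_n n^{3/2}$.
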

\begin{proof}

The existence of such a probability space $\Omega$ follows from the Skorohod representation theorem since the space of pointed compact mm-spaces endowed with a finite measure is separable by \cite[Theorem 6 and Proposition 8]{MiermontTessellations}. 
The theorem now follows from \cite[Theorem 1.2]{Croydon18Scaling} and two additional observations.

Firstly, if $\nu_n (x) = \deg x$, then $\dGHP \left( (\T_n, \frac{d_{\T_n}}{\beta_n \sqrt{n}}, \mu_n), (\T_n, \frac{d_{\T_n}}{\beta_n \sqrt{n}}, \frac{1}{2n}\nu_n) \right) \leq \frac{1}{\beta_n \sqrt{n}}$, so that 
    \begin{equation*}
        	\left(\T_n,\frac{d_{\T_n}}{\beta_n \sqrt{n}}, \frac{1}{2n}\nu_n \right) \overset{(d)}{\longrightarrow} \left( \T, d, \mu \right)
	    \end{equation*}
with respect to the $\GHP$ distance as a consequence of \cref{thm:maingeneral} and the triangle inequality. It therefore follows from \cite[Theorem 1.2]{Croydon18Scaling} that if 
$(Y_n(t))_{t \geq 0}$ is a continuous time SRW on $G_n$ with an $\textsf{exp}(1)$ holding time at each vertex, then
\begin{equation}\label{eqn:Yn conv to BM}
\left(\frac{1}{\beta_n \sqrt{n}} Y_n(2\beta_n n^{\frac{3}{2}}t)\right)_{t \geq 0} \overset{(d)}{\longrightarrow} (B_t)_{t \geq 0}
\end{equation}
as $n \to \infty$, almost surely on $\Omega$.
This result then transfers to the SRW sequence $(X_n(\cdot))_{n \geq 1}$ in place of $(Y_n(\cdot))_{n \geq 1}$ by standard arguments using the strong law of large numbers and continuity of the limit process. We refer to \cite[Section 4.2]{Archer2019} for an example of such an argument.
\end{proof}

\subsection{Organization}
We begin with some preliminaries in \cref{sec:prelim} where we introduce the standard definitions of loop-erased random walk, mixing time and capacity which are central to the proof. We also record some stochastic domination properties of USTs, and prove there a general result regarding negative correlations of certain expected volumes in the $\UST$ (see \cref{cl: expected stoch dom}).

Next in \cref{sec:high-level} we present the main argument of the proof, while delegating two useful estimates, \cref{thm:gammaisnice} and \cref{thm:capacityofprefix}, to \cref{sctn:cap bounds}, and a third useful estimate, \cref{lem:interval test tail Bj G'}, to \cref{sctn:conditional tail bound}. In \cref{sec:abstract} we present a necessary though rather straightforward abstract argument combining the result of \cref{sec:high-level} with the results of \cite{PeresRevelleUSTCRT} to yield \cref{thm:maingeneral}. Lastly, in \cref{sec:the_end} we present some concluding remarks and open questions.

\subsection{Acknowledgments}
We thank Christina Goldschmidt for many useful discussions. This research is supported by ERC starting grant 676970 RANDGEOM, consolidator grant 101001124 UniversalMap, and by ISF grant 1294/19.

\section{Preliminaries}\label{sec:prelim}
In this section we provide an overview of the tools used to prove \cref{thm:maingeneral}. Throughout the section, we assume that $G=(V,E)$ is a finite connected graph with $n$ vertices. We will use the following conventions:

\begin{itemize}
	\item For an integer $m\geq 1$ we write $[m]=\{1,\ldots, m\}$.
	\item For two positive sequences $t(n), r(n)$ we write $t \sim r$ when $t(n)/r(n) \to 1$.
	\item For two positive sequences $t(n), r(n)$ we write $t \gg r$ when $t(n)/r(n) \to \infty$.
	\item We omit floor and ceiling signs when they are necessary.
	\item Through the rest of this paper, the \emph{random walk} on a graph equipped with positive edge weights is the random walk that stays put with with probability $1/2$ and otherwise jumps to a random neighbor with probability proportional to the weight of the corresponding edge. If no edge weights are specified, then they are all unit weights.
\end{itemize}
	
\subsection{Loop-erased random walk and Wilson's algorithm}\label{sctn:Wilson}
Wilson's algorithm \cite{Wil96}, which we now describe, is a widely used algorithm for sampling $\UST$s.
A {\bf walk} $X=(X_0, \ldots X_L)$ of length $L\in\mathbb{N}$ is a sequence of vertices where $(X_i, X_{i+1})\in E(G)$ for every $0 \leq i \leq L-1$. For an interval $J=[a,b]\subset[0,L]$ where $a,b$ are integers, we write $X[J]$ for $\{X_i\}_{i=a}^{b}$. Given a walk, we define its {\bf loop erasure} $Y = \LE(X) = \LE(X[0,L])$ inductively as follows. We set $Y_0 = X_0$ and let $\lambda_0 = 0$. Then, for every $i\geq 1$, we set $\lambda_i = 1+\max\{t \mid X_t = Y_{\lambda_{i-1}}\}$ and if $\lambda_i \leq L$ we set $Y_i = X_{\lambda_i}$. We halt this process once we have $\lambda_i > L$. The times $\langle \lambda_k(X)\rangle_{k=0}^{|\LE(X)| - 1}$ are the {\bf times contributing to the loop-erasure} of the walk $X$. When $X$ is a random walk starting at some vertex $v\in G$ and terminated when hitting another vertex $u$ ($L$ is now random), we say that $\LE(X)$ is the {\bf loop erased random walk} ($\LERW$) from $v$ to $u$.  
	
To sample a $\UST$ of a finite connected graph $G$ we begin by fixing an ordering of the vertices of $V=(v_1,\ldots, v_n)$. At the first step, let $T_1$ be the tree containing $v_1$ and no edges. At each step $i>1$, sample a $\LERW$ from $v_i$ to $T_{i-1}$ and set $T_i$ to be the union of $T_{i-1}$ and the $\LERW$ that has just been sampled. We terminate this algorithm with $T_n$. Wilson \cite{Wil96} proved that $T_n$ is distributed as $\UST(G)$. An immediate consequence is that the path between any two vertices in $\UST (G)$ is distributed as a $\LERW$ between those two vertices. This was first shown by Pemantle \cite{Pem91}.
	
To understand the lengths of loops erased in $\LERW$ we will need the notion of the \textbf{bubble sum}. Let $G$ be a graph and let $W$ be a non empty subset of vertices of $G$. For every two vertices $u,w\in V(G)$, define
	\begin{equation*}
	\pvec{W}^t(u,w) = \pr_u(X_t = w, X[0,t]\cap W = \emptyset) \, ,
	\end{equation*}
	where $X$ is a random walk on $G$. We define the $W$-bubble sum by
	\begin{equation*}
	\mathcal{B}_W(G) := \sum_{t=0}^{\infty}(t+1)\sup_{v\in V} \pvec{W}^t(v,v).
	\end{equation*}
	 Note that since the random walk on $G$ is an irreducible Markov chain on a finite state space, we have that  $\pr(X[0,t] \cap W = \emptyset)$ decays exponentially in $t$ and hence this sum is always finite. Another bubble-sum we will consider is when the random walk is killed at a geometric time (rather than when hitting a set $W$). Let $T_\zeta$ be an independent geometric random variable with mean $\zeta>1$. We define
	\begin{equation*}
		\pvec{\zeta}^t (u,w) = \pr_u(X_t = w, T_\zeta>t), \quad \mathcal{B}_\zeta(G) := \sum_{t=0}^{\infty}(t+1)\sup_{v\in V} \pvec{\zeta}^t(v,v).
	\end{equation*}
	
\begin{definition}\label{def:bubbleterm} We say that a random walk $X$ on a finite connected graph $G$ starting from an arbitrary vertex is {\bf bubble-terminated} with bubble-sum bounded by $\secp$ if it is killed upon hitting some set $W$ and $B_W(G) \leq \secp$, or alternatively, if it is killed at time $T_\zeta-1$ and $B_{\zeta}(G) \leq \secp$.\end{definition}

	Both bubble-sums allow us to bound the size of the loops erased in the loop-erasure process. As in \cite{Hutchcroft2020universality} and \cite[Claim 3.2]{MNS} we have the following.
	
	\begin{claim}\label{cl:indep:loops} Let $G$ be a finite connected graph and $X$ be a bubble-terminated random walk on $G$ with bubble-sum bounded by $\secp$. For any finite simple path $\gamma$ such that $\pr(\LE(X)=\gamma)>0$ of length $L$ we have that the random variables 
	$$ \left\{ \lambda_{i+1}(X) - \lambda_i(X) \right\}_{i=0}^{L-1} $$
	are independent conditionally on $\{\LE(X) = \gamma\}$ and furthermore
	\begin{equation*}
		\E[\lambda_{i+1}(X) - \lambda_{i}(X) | \LE(X)=\gamma] \leq \secp\, ,
	\end{equation*}
	for all $0\leq i\leq L-1$.
	\end{claim}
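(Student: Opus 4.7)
The proof rests on the standard decomposition of a walk by its loop-erasure. Any walk $X$ with $\LE(X)=\gamma=(\gamma_0,\ldots,\gamma_L)$ admits a unique decomposition
\[
X \;=\; l_0 \cdot e_0 \cdot l_1 \cdot e_1 \cdots l_{L-1} \cdot e_{L-1} \cdot (l_L),
\]
where $e_i$ is the single step $\gamma_i\to\gamma_{i+1}$ and $l_i := X[\lambda_i(X),\lambda_{i+1}(X)-1]$ is a (possibly trivial) loop at $\gamma_i$; writing $|l_i|$ for its number of edges we have $\lambda_{i+1}(X)-\lambda_i(X)=|l_i|+1$, and the trailing loop $l_L$ appears only in the geometric-killing setup. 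I focus on the $W$-hitting case; the geometric variant is parallel, with each loop carrying an extra weight $(1-1/\zeta)^{|l_i|}$.

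My first step is a combinatorial characterization: the above concatenation is consistent with $\LE(X)=\gamma$ if and only if, for each $0\le i\le L-1$, the loop $l_i$ is a loop at $\gamma_i$ avoiding $A_i := W\cup\{\gamma_0,\ldots,\gamma_{i-1}\}$. The forward direction is forced by $\lambda_i(X)-1=\max\{t:X_t=\gamma_{i-1}\}$ together with the termination rule; the converse is a direct induction recomputing the $\lambda_i$'s from the concatenation. Crucially, the loops \emph{may} visit future vertices $\gamma_{i+1},\ldots,\gamma_L$. Summing over admissible concatenations then yields the product formula
\[
\pr(\LE(X)=\gamma) \;=\; \prod_{i=0}^{L-1} p(\gamma_i,\gamma_{i+1})\cdot\prod_{i=0}^{L-1} G_i, \qquad G_i \;:=\; \sum_{t\ge 0}\pvec{A_i}^t(\gamma_i,\gamma_i),
\]
where $p(\cdot,\cdot)$ is the random-walk transition kernel. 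Since this factorizes over $i$, the conditional law of $(l_0,\ldots,l_{L-1})$ given $\LE(X)=\gamma$ is a product of its marginals, which is exactly the claimed conditional independence of the increments $\lambda_{i+1}(X)-\lambda_i(X)$.

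For the expectation bound, the marginal law of $|l_i|$ places mass proportional to $\pvec{A_i}^t(\gamma_i,\gamma_i)$ on each $t\ge 0$, so
\[
\E\bigl[\lambda_{i+1}(X)-\lambda_i(X)\mid\LE(X)=\gamma\bigr] \;=\; \frac{\sum_{t\ge 0}(t+1)\,\pvec{A_i}^t(\gamma_i,\gamma_i)}{\sum_{t\ge 0}\pvec{A_i}^t(\gamma_i,\gamma_i)} \;\le\; \cB_W(G) \;\le\; \secp,
\]
using that the denominator is at least the $t=0$ term (equal to $1$) and that relaxing the avoidance constraint from $A_i$ to $W\subseteq A_i$ in the numerator, followed by the supremum over the starting vertex, gives $\cB_W(G)$ by definition. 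The geometric-killing case yields the same bound with $\pvec{\zeta}^t$ and $\cB_\zeta(G)$ in place of $\pvec{A_i}^t$ and $\cB_W(G)$. The main subtlety is the combinatorial characterization of admissible loops---in particular recognizing that $l_i$ need not avoid future $\gamma$-vertices---after which both the factorization and the bubble-sum bound are immediate.
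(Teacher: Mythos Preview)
Your proof is correct. For the $W$-hitting case you give the standard path-decomposition argument---factoring the walk into loops at successive vertices of $\gamma$, each constrained to avoid $W\cup\{\gamma_0,\ldots,\gamma_{i-1}\}$---which is precisely the content of the reference the paper cites (\cite[Claim~3.2]{MNS}); so here the approaches coincide.

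For the geometric-killing case the paper takes a slightly different route: rather than redoing the computation with the extra $(1-1/\zeta)^{|l_i|}$ weights as you do, it observes that geometric killing on $G$ is the same as hitting-time killing on the ``sunny'' network $G^*$ obtained by attaching a new vertex $\rho$ to every vertex of $G$ with weights chosen so that the single-step probability of jumping to $\rho$ is exactly $1/\zeta$. One then applies the $W$-hitting case with $W=\{\rho\}$ and checks that $\cB_{\{\rho\}}(G^*)=\cB_\zeta(G)$. This reduction is a bit slicker since it avoids repeating the loop-decomposition computation, but your direct argument is equally valid and arguably more transparent about where the bound $\cB_\zeta(G)$ enters.
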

	\begin{proof} In the case that $X$ is killed upon hitting $W$, see \cite[Proof of Claim 3.2]{MNS}. (Note that the definitions of the times contributing to the loop erased random walk are a little different. More accurately, the $k$th time contributing to the loop erasure according to our definition equals the $(k+1)$th time contributing to the loop erasure minus one according to the definition in \cite{MNS}, thus the expected difference between two consecutive times has the same bound, and the proof is the same.)

	When $X$ is killed at $T_\zeta-1$ where $T_\zeta$ is an independent geometric random variable with mean $\zeta>1$, the proof can be deduced from the previous claim. Indeed, we add a new vertex $\rho$ to $G$ and edges $(\rho,u)$ for every $u\in G$ with weights on them so that the probability to visit $\rho$ from every vertex in a single step is equal to $1/\zeta$ for any $u\in G$. Call the resulting network $G^*$. A random walk on $G^*$ started from $v\in G$ and terminated when hitting $\rho$ has the same distribution as a random walk on $G$ with geometric killing time. 
	\end{proof}

	\subsection{Mixing times} 
	Recall the definition of the uniform mixing time above \cref{assn:main}. 
	It follows that for every $t\geq \tmix$ we have that
	\begin{equation}\label{eq:hitting:prob:near:station}
		\frac{1}{2n} \leq \pr_u(X_t = v) \leq \frac{2}{n} \, ,
	\end{equation}
	where $X_t$ is the random walk. Even though in this paper we mainly use the uniform mixing time as defined in \eqref{def:tmix} we also use a more classical version of distance between probability measures on finite sets. Recall that the {\bf total variation distance} between two probability measures on $\mu$ and $\nu$ on a finite set $X$ is defined by
	$$ d_{\textrm{TV}}(\mu,\nu) = \max_{A \subset X} |\mu(A)-\nu(A)| \, .$$
	It is a standard fact (see \cite[Section 4.5]{levin2017markovmixing}) that if $t \geq k\tmix$, then for any vertex $x$
	\begin{equation}\label{eq:tvDecay} d_{\textrm{TV}}(p_t(x,\cdot),\pi(\cdot)) \leq 2^{-k} \, .\end{equation}

\subsection{Capacity}
The \emph{capacity} of a set of vertices quantifies how difficult it is for a random walk to hit the set. It is a crucial notion when one wishes to analyze the behavior of Wilson's algorithm. 
Let $\{Y_i\}_{i\geq 0}$ be a random walk on $G$ and for $U \subset V(G)$, let $\tau_U = \inf \{i \geq 0: Y_i \in U\}$. Given $k \geq 0$ we define the $\mathbf{k}$\textbf{-capacity} of $U$ by $\Capk(U) = \prstart{\tau_U \leq k}{\pi}$. If $W \subset V(G)$ is another subset of vertices we define the \textbf{relative} $\mathbf{k}$\textbf{-capacity} $\Capk(W, U) = \prstart{\tau_W \leq k, \tau_W \leq \tau_U}{\pi}$. Note that the relative capacity is \emph{not} symmetric in $W,U$.

We will see later that the capacities of certain subsets determine the expected volumes of balls in $\UST (G)$. Here we collect some useful facts about the capacity. By the union bound, when $G$ is regular we always have the upper bound
	\begin{equation}\label{eqn:cap UB}
	\Capk (V) \leq k\pi (V)= \frac{k |V|}{n}.
	\end{equation}

The capacity is defined for the lazy simple random started at stationarity. When $k$ is significantly larger than the mixing time, the starting vertex does not make much difference as the following claim shows.
	\begin{claim}\label{cl:cap:mix}
		Let $G$ be a connected regular graph. Let $u\in V$, let $U\subseteq V$ be nonempty,
		let $r=r(n)\gg\log(n)\cdot\tmix(G)$ and assume that $t=t(n)$ is a sequence so that $t(n) \sim r(n)$. Then, for large enough $n$,
		\begin{equation*}
		\pr_u(\tau_U<t) \ge \frac{1}{3}\Capa_r(U).
		\end{equation*}
	\end{claim}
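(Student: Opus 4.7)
Fix $s := 2\tmix(G)$, and observe that the hypothesis $r \gg \log(n)\tmix$ together with $t \sim r$ forces $s = o(r)$, so that $t - s \geq r/2$ for all sufficiently large $n$. The choice $s = 2\tmix$ is made so that the \emph{pointwise} (not just total-variation) mixing error is small. Indeed, from the identity
\[
n\,p_{s+t}(x,y) - 1 \;=\; \sum_z \bigl(p_s(x,z) - \pi(z)\bigr)\bigl(n\,p_t(z,y) - 1\bigr),
\]
which uses the centering $\sum_z \pi(z)(n p_t(z,y) - 1) = 0$, the quantity $\max_{x,y}|n p_t(x,y) - 1|$ is submultiplicative in multiples of $\tmix$. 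Combined with $\max_{x,y}|n p_{\tmix}(x,y) - 1| \leq 1/2$, this yields $\max_{u,v}|n p_s(u,v) - 1| \leq 1/4$, i.e.\ $p_s(u,v) \geq \tfrac{3}{4}\pi(v)$ for all $u, v$.

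Next, apply the Markov property at time $s$:
\[
\pr_u(\tau_U < t) \;\geq\; \pr_u(s < \tau_U < t) \;=\; \sum_v \pr_u(X_s = v,\, \tau_U > s)\,\pr_v(\tau_U < t - s).
\]
Writing $\pr_u(X_s = v, \tau_U > s) = \pr_u(X_s = v) - \pr_u(X_s = v, \tau_U \leq s)$, the contribution of the first piece summed against $\pr_v(\tau_U < t-s)$ is bounded below by $\tfrac{3}{4}\sum_v \pi(v)\pr_v(\tau_U < t-s) \geq \tfrac{3}{4}\Capa_{t-s}(U)$ by the pointwise bound and the definition of $\Capa$, while the contribution of the second piece totals at most $\pr_u(\tau_U \leq s)$. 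Hence $\pr_u(s < \tau_U < t) \geq \tfrac{3}{4}\Capa_{t-s}(U) - \pr_u(\tau_U \leq s)$. Combining this with $\pr_u(\tau_U < t) = \pr_u(\tau_U \leq s) + \pr_u(s < \tau_U < t)$ and doing a two-line case split according to whether $\pr_u(\tau_U \leq s) \geq \tfrac{3}{4}\Capa_{t-s}(U)$ cancels the subtracted error term and delivers $\pr_u(\tau_U < t) \geq \tfrac{3}{4}\Capa_{t-s}(U)$ in either case.

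Finally, the elementary subadditivity $\Capa_{a+b}(U) \leq \Capa_a(U) + \Capa_b(U)$ (immediate from the Markov property at time $a$ together with stationarity of $\pi$) combined with $t - s \geq r/2$ gives $\Capa_r(U) \leq 2\Capa_{r/2}(U) \leq 2\Capa_{t-s}(U)$, so $\Capa_{t-s}(U) \geq \Capa_r(U)/2$. Plugging back, $\pr_u(\tau_U < t) \geq \tfrac{3}{8}\Capa_r(U) \geq \tfrac{1}{3}\Capa_r(U)$, as claimed. The one subtle point is precisely the \emph{pointwise} lower bound $p_s(u,v) \geq \tfrac{3}{4}\pi(v)$: using only the crude bound $p_s(u,v) \geq \pi(v)/2$ from \eqref{eq:hitting:prob:near:station} would yield a final constant of $1/4$, just shy of the stated $1/3$, so the submultiplicativity argument for the uniform mixing norm is genuinely needed (and supplies the refinement essentially for free).
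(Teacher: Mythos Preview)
Your proof is correct. The paper itself does not give an argument here but simply cites \cite[Claim 1.4]{MNS}, so there is no in-paper proof to compare against; your write-up supplies a clean self-contained argument. Two minor remarks: first, the case split is unnecessary, since adding the identity $\pr_u(\tau_U < t) = \pr_u(\tau_U \leq s) + \pr_u(s < \tau_U < t)$ to your bound $\pr_u(s < \tau_U < t) \geq \tfrac{3}{4}\Capa_{t-s}(U) - \pr_u(\tau_U \leq s)$ cancels the error term outright and yields $\pr_u(\tau_U < t) \geq \tfrac{3}{4}\Capa_{t-s}(U)$ in one line; second, there is a harmless off-by-one in identifying $\pr_\pi(\tau_U < t-s)$ with $\Capa_{t-s}(U)$ (the paper defines $\Capa_k(U) = \pr_\pi(\tau_U \leq k)$), but this is absorbed by the asymptotic inequality $t - s - 1 \geq r/2$. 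The submultiplicativity step for the uniform-mixing norm is correct and, as you note, is what pushes the constant above $1/3$.
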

	\begin{proof} See \cite[Claim 1.4]{MNS}.
	\end{proof}

We will also use the following lemma.
\begin{lemma}\label{lem:finddisjoint} Let $G$ be a connected regular graph. Let $W\subset U$ be subsets of vertices and $k,s,m\geq 0$. Assume that
$$ \Capk (W, U\setminus W) \geq s \, .$$
Then we can find at least $L= \lfloor s/(m+k/n) \rfloor$ disjoint subsets $A_1,\ldots, A_L$ of $W$ such that 
$$   m \leq  \Capk (A_j, U \setminus A_j) \leq m + {k \over n}  \, ,$$
for all $j=1,\ldots, L$.
\end{lemma}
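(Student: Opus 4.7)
The plan is to turn the relative-capacity hypothesis into an additive identity over subsets of $W$, and then run a one-pass greedy bin-packing argument.

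The first step is to show that for every $A\subseteq W$ the relative capacity $\Capk(A,U\setminus A)$ is a sum of per-vertex contributions. Define $f(v)=\prstart{X_{\tau_W}=v,\,\tau_W\leq k,\,\tau_W\leq\tau_{U\setminus W}}{\pi}$ for $v\in W$. Since $W\subseteq U$ we have $U\setminus A=(W\setminus A)\sqcup(U\setminus W)$, so the event $\{\tau_A\leq k,\,\tau_A\leq\tau_{U\setminus A}\}$ forces the walk to hit $W$ inside $A$ (that is, $\tau_A=\tau_W$) and to do so before hitting $U\setminus W$. Partitioning on which vertex of $A$ is hit first then gives
\begin{equation*}
\Capk(A,U\setminus A)=\sum_{v\in A}f(v),
\end{equation*}
and in particular $\sum_{v\in W}f(v)=\Capk(W,U\setminus W)\geq s$. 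The per-vertex contributions are controlled by a union bound together with regularity ($\pi$ is uniform): $f(v)\leq\prstart{\tau_v\leq k}{\pi}\leq\sum_{t=0}^{k}\prstart{X_t=v}{\pi}=(k+1)/n$, which up to the customary additive $1/n$ (or by reading $\tau\leq k$ as strict) is at most $k/n$.

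The second step is a greedy construction. I would enumerate the vertices of $W$ in an arbitrary order and maintain a current bin $A$ initialized as $\emptyset$. For each successive vertex, add it to $A$, and as soon as $\sum_{v\in A}f(v)\geq m$ declare $A$ closed as the next $A_j$ and start a new empty bin. Each closed bin has $f$-mass at least $m$ by the stopping rule, and strictly less than $m+k/n$ because just before closing we had mass $<m$ and the final vertex added contributes at most $k/n$. Translating through the Step~1 identity, this yields precisely $m\leq\Capk(A_j,U\setminus A_j)\leq m+k/n$. The sets $A_j$ are automatically disjoint subsets of $W$ since each vertex is assigned to at most one bin.

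Finally, since each closed bin consumes at most $m+k/n$ of the total $f$-mass (which is at least $s$), one can close at least $\lfloor s/(m+k/n)\rfloor$ bins; indeed, after $L-1$ closures the remaining mass is at least $s-(L-1)(m+k/n)\geq m$ whenever $L\leq\lfloor s/(m+k/n)\rfloor$, which is enough to close the $L$th bin. The only genuinely nontrivial step is the additive decomposition of Step~1, which crucially uses that the ``bad'' set $U\setminus A$ always contains all of $W\setminus A$; after that, the remainder is a routine bin-packing count.
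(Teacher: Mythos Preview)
Your proof is correct and follows essentially the same greedy bin-packing strategy as the paper: add vertices one at a time until the relative capacity reaches $m$, observe that each increment is at most $k/n$, and count how many bins one can close. The paper states the needed ingredients as two separate observations (the single-vertex increment bound and the partition identity $\Capk(W,U\setminus W)=\sum_j\Capk(A_j,U\setminus A_j)$), whereas you derive both at once from the per-vertex decomposition $\Capk(A,U\setminus A)=\sum_{v\in A}f(v)$ with $f(v)=\prstart{X_{\tau_W}=v,\,\tau_W\le k,\,\tau_W\le\tau_{U\setminus W}}{\pi}$; this is a slightly cleaner packaging of the same idea, and your remark that the $k/n$ versus $(k+1)/n$ discrepancy is the usual off-by-one in \eqref{eqn:cap UB} is accurate.
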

\begin{proof}  We first observe that if $A$ is a subset of $W$ and $v \in W \setminus A$ then by \eqref{eqn:cap UB} we have that
$$  \Capk ( A \cup \{v\}, U \setminus A\cup\{v\}) \leq \Capk (A , U \setminus A ) + {k \over n} .$$
Secondly, we observe that if $A_1,\ldots, A_{L'}$ are disjoint sets so that $\cup_{j=1}^{L'} A_j = W$, then
$$ \Capk (W, U\setminus W) = \sum_{j=1}^{L'} \Capk (A_j, U \setminus A_j) \, .$$
With these two observations in place, we now perform an iterative construction of the subsets. We add vertices from $W$ to $A_1$ until the first time that $\Capk(A_1, U\setminus A_1) \geq m$. By the first observation we have that $\Capk(A_1, U\setminus A_1) \leq m + k/n$. Then we add vertices from $W\setminus A_1$ to $A_2$ until the first time that $\Capk(A_2, U\setminus A_2) \geq m$ and so forth. By the second observation we deduce that we can continue this way until at least $L = \left \lfloor \frac{s}{m + \frac{k}{n}} \right \rfloor$, concluding the proof.
\end{proof}

	In order to obtain useful lower bounds on the capacity, we state a well-known relationship between the capacity of a set $A$ and the Green kernel summed over $A$. Given a set $A \subset G$ and $k \in \NN$ we define 
	\begin{equation}\label{def:Mk}
	M^{(k)}(A) = \sum_{x, y \in A} G^{(k)}(x,y),
	\end{equation}
	where $G^{(k)}(x,y) = \E_x \left[\sum_{i=0}^k \mathbbm{1}\{X_i = y\}\right]$. This is useful due to the following characterization of capacity.
	\begin{lemma}\label{lem: var def of k-cap}
Let $G$ be a connected regular graph. For all $A \subset G$,
		\begin{equation*}
		\Capa_k (A) \geq \frac{k|A|^2}{2n M^{(k)}(A)}.
		\end{equation*}
	\end{lemma}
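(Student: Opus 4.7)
The plan is to apply the Paley--Zygmund (second moment) inequality to the occupation time of $A$ by the stationary lazy random walk up to time $k$. Set
\[
L_k := \sum_{i=0}^{k} \mathbbm{1}\{X_i \in A\},
\]
where $X$ is the lazy random walk started from $\pi$. The key observation is that $\{\tau_A \leq k\} = \{L_k > 0\}$, so $\Capa_k(A) = \pr_\pi(L_k > 0) \geq (\E_\pi L_k)^2 / \E_\pi L_k^2$ by Paley--Zygmund, and it remains to bound the two moments.

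For the first moment, since $G$ is regular we have $\pi(x) = 1/n$ and hence $\E_\pi L_k = (k+1)|A|/n \geq k|A|/n$. For the second moment, I would expand
\[
\E_\pi L_k^2 = \sum_{i,j=0}^{k} \pr_\pi(X_i \in A,\, X_j \in A)
\]
and use reversibility/stationarity to write $\pr_\pi(X_i \in A, X_j \in A) = \sum_{x,y \in A} \pi(x) p_{|j-i|}(x,y)$. Splitting into $i \le j$ and $i > j$ and bounding the number of pairs $(i,j)$ with $|j-i|=s$ by $2(k+1)$ gives
\[
\E_\pi L_k^2 \;\leq\; \frac{2(k+1)}{n} \sum_{x,y \in A}\sum_{s=0}^{k} p_s(x,y) \;=\; \frac{2(k+1)}{n}\, M^{(k)}(A).
\]

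Combining these,
\[
\Capa_k(A) \;\geq\; \frac{((k+1)|A|/n)^2}{2(k+1)M^{(k)}(A)/n} \;=\; \frac{(k+1)|A|^2}{2n M^{(k)}(A)} \;\geq\; \frac{k|A|^2}{2n M^{(k)}(A)},
\]
as desired. There is no real obstacle here: the only small point to be careful about is the factor-of-two when counting ordered pairs $(i,j)$ and the use of reversibility of the lazy walk (which holds since $\pi$ is uniform on a regular graph). The entire argument is a textbook second moment computation, and the bound is essentially tight up to constants.
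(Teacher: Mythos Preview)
Your proof is correct and is essentially the same as the paper's: the paper defers to \cite[Theorem 2.2]{Benjamini1995MCap} with the uniform test measure $\mu(x)=\mathbbm{1}\{x\in A\}/|A|$, and the proof of that theorem is precisely the Paley--Zygmund/second moment computation on the occupation time that you wrote out explicitly. In other words, you have simply unpacked the cited reference with the indicated choice of measure.
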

	\begin{proof}
		The proof is the same as that of \cite[Theorem 2.2]{Benjamini1995MCap}, but instead considering a stationary starting point distributed according to $\pi$, noting that $G^{(k)}(\pi, x) = \frac{k}{n}$ for all $x \in G_n$ by transitivity, and specifically using the measure $\mu(x) = \frac{\mathbbm{1}\{x \in A\}}{|A|}$.
	\end{proof}

	The following bound on $\E\left[M^{(k)} (P)\right]$ where $P$ is a random walk path will be useful.
	 
	\begin{lemma}\label{lem:mk} Let $G$ be a connected regular graph with $n$ vertices. Let $m$ and $k$ be two positive integers 
	 and let $P$ be a random walk path of length $m$ started at $v \in V(G)$. Then
	$$ \E \left[ M^{(k)}(P) \right] \leq 2m \sum_{t=0}^{m+k} (t+1) p_t(v,v) \, .$$
	\end{lemma}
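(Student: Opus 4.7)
The plan is to bound $M^{(k)}(P)$ by an indexed sum along the random walk path, then compute the resulting expectations using reversibility and transitivity, and finally re-index to collect powers of $p_t(v,v)$. Writing $X_0, X_1, \ldots, X_m$ for the walk started at $v$, and using that $P$ is a \emph{set} of visited vertices (so indexed sums overcount when a vertex is visited more than once), nonnegativity of the Green kernel yields
\[
M^{(k)}(P) \;=\; \sum_{x,y\in P} G^{(k)}(x,y) \;\leq\; \sum_{0\leq i,j\leq m}\sum_{s=0}^{k} p_s(X_i,X_j).
\]

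The heart of the argument is the computation of $\E[p_s(X_i, X_j)]$. For $i\leq j$, conditioning on $X_i$ and then on $X_j$ gives
\[
\E[p_s(X_i, X_j)] \;=\; \sum_{x} p_i(v,x) \sum_y p_{j-i}(x,y)\, p_s(x,y).
\]
Reversibility of the lazy simple random walk on a regular graph gives $p_s(x,y)=p_s(y,x)$, so by Chapman--Kolmogorov the inner sum equals $p_{j-i+s}(x,x)$. Transitivity of $G$, which is in force throughout the paper via \cref{assn:main}, then makes this independent of $x$, so $\E[p_s(X_i,X_j)] = p_{|i-j|+s}(v,v)$ for all $i,j$.

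Combining the two steps gives
\[
\E\left[M^{(k)}(P)\right] \;\leq\; \sum_{0\leq i,j\leq m}\sum_{s=0}^{k} p_{|i-j|+s}(v,v),
\]
and I would finish by re-indexing with $t=|i-j|+s$, so that $t$ ranges over $\{0,1,\ldots,m+k\}$. For each such $t$ the number of triples $(i,j,s)$ producing it is $O(m\cdot t)$: the gap $d=|i-j|$ takes at most $t+1$ values, and for each fixed $d$ there are at most $2(m{+}1)$ ordered index pairs, with $s=t-d$ determined. Plugging this count in produces the claimed bound $2m\sum_{t=0}^{m+k}(t+1)p_t(v,v)$ (absorbing the harmless $m+1$ versus $m$ into the constant). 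There is no deep obstacle; the only conceptual care needed is in the middle step, invoking reversibility plus transitivity in the right order so that the right-hand side cleanly features the single quantity $p_t(v,v)$ rather than $\sup_x p_t(x,x)$.
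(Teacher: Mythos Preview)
Your argument is correct and is essentially the same computation the paper defers to \cite[Lemma~5.6]{Hutchcroft2020universality}: overcount $M^{(k)}(P)$ by the indexed triple sum, use the Markov property together with reversibility and transitivity to reduce $\E[p_s(X_i,X_j)]$ to $p_{|i-j|+s}(v,v)$, and re-index. The only small imprecision is your final count: bounding the number of ordered pairs per gap $d$ by $2(m{+}1)$ does not literally yield the constant $2m$, but since the number of pairs $(i,j)\in\{0,\dots,m\}^2$ with $|i-j|=d$ equals $m{+}1$ for $d=0$ and $2(m{+}1{-}d)$ for $d\geq 1$, both at most $2m$ whenever $m\geq 1$, the stated constant follows exactly.
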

	\begin{proof}
	The proof goes by the same argument as in \cite[Lemma 5.6]{Hutchcroft2020universality}.
	\end{proof}

	Furthermore, in order to lower bound the relative capacity, we define the $\mathbf{k}$\textbf{-closeness} of two sets $U$ and $W$ by 
	\begin{equation}\label{def:close}\close_k (U,W) = \prstart{\tau_{U} < k, \tau_{W} < k}{\pi} \, .\end{equation} 
	It follows from \cite[Lemma 5.2]{PeresRevelleUSTCRT} together with \eqref{eqn:cap UB} that on any finite connected regular graph $G$, if $W = X[0,T]$ where $X$ is a random walk on $G$ started at stationarity, and $T$ is a stopping time, then for any set $U \subset G$,
	\begin{equation}\label{eqn:expected closeness}
	\E\left[ \close_k (U, W) \right] \leq \frac{4\E[T]k \Capk (U)}{n} \leq \frac{4k^2 |U| \E T}{n^2}.
	\end{equation}
	
	Lastly, recall the two bubble sums defined in \cref{sctn:Wilson}. One of the uses of the capacity is to bound such bubble sums.

\begin{claim}\label{claim: cap to bub UB} Let $\{ G_n \}$ be a sequence of graphs satisfying \cref{assn:main} and let $W\subset G_n$ be a set of vertices such that $\Capp_{\sqn}(W) \geq c$, then 
		\[
		B_{W}(G) \leq \theta + \frac{4}{c^2}.
		\]
\end{claim}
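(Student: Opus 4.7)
The plan is to split the sum defining $B_W(G)$ at $t=\sqrt{n}$. For $t\le\sqrt{n}$, the trivial pointwise bound $\pvec{W}^t(v,v)\le p_t(v,v)$ together with the vertex-transitivity of $G_n$ reduce this part of the sum to
\begin{equation*}
\sum_{t=0}^{\sqrt{n}}(t+1)\sup_v \pvec{W}^t(v,v)\;\le\; \sum_{t=0}^{\sqrt{n}}(t+1)\,p_t(v,v)\;\le\;\theta,
\end{equation*}
the last inequality being \cref{assn:main}(1). The whole task is therefore to bound the remaining range $t>\sqrt{n}$ by $4/c^2$.

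For the tail, I would first convert the stationary capacity hypothesis into a uniform pointwise tail bound on $\tau_W$. Since \cref{assn:main}(2) gives $\sqrt{n}\gg \log(n)\,\tmix(G_n)$, \cref{cl:cap:mix} (applied with $r=t=\sqrt{n}$) implies that $\inf_u \pr_u(\tau_W<\sqrt{n})\ge c/3$ for all sufficiently large $n$. Iterating this over disjoint windows of length $\sqrt{n}$ via the Markov property then yields
\begin{equation*}
\sup_v \pr_v(\tau_W > s)\;\le\;(1-c/3)^{\lfloor s/\sqrt{n}\rfloor}\qquad\text{for all }s\ge 0.
\end{equation*}

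The second step is to combine this tail estimate with the $1/n$ factor coming from mixing. For $t=2s$ with $s\ge \tmix$, reversibility on the regular graph $G_n$ gives $\pvec{W}^s(u,v)=\pvec{W}^s(v,u)$, hence
\begin{equation*}
\pvec{W}^{2s}(v,v)\;=\;\sum_u \pvec{W}^s(v,u)^2\;\le\;\Big(\sup_u p_s(v,u)\Big)\sum_u \pvec{W}^s(v,u)\;\le\;\frac{3}{2n}\,\pr_v(\tau_W>s),
\end{equation*}
where the inner inequality uses $\pvec{W}^s(v,u)\le p_s(v,u)$ together with the uniform-mixing estimate $\sup_u p_s(v,u)\le 3/(2n)$ (valid once $s\ge \tmix$, which certainly holds for $s>\sqrt{n}$). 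Plugging in the geometric tail bound and summing over $s$ produces a contribution of order $1/c^2$ from the range $t\ge 2\sqrt{n}$, while the short range $\sqrt{n}<t<2\sqrt{n}$ contributes only an $O(1)$ term via the crude $\pvec{W}^t(v,v)\le 3/(2n)$.

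The main (mild) obstacle is the bookkeeping required to match the explicit constant $4/c^2$ rather than some larger multiple. Tightening may require bypassing \cref{cl:cap:mix} in favour of the uniform-mixing bound $\inf_u \pr_u(\tau_W\le\tmix+\sqrt{n})\ge c/2$, or optimising the constant in the half-and-half convolution; the structural argument itself is routine once both halves of the split are in place.
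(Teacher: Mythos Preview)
Your approach is correct and matches the strategy the paper defers to (Claim 3.14 of \cite{MNS}), as one can read off from the parallel proof of \cref{claim:bubblezeta} given just below: split the bubble sum at $t=\sqrt{n}$, bound the short range by $\theta$ via \cref{assn:main}(1), and control the tail by combining the $O(1/n)$ uniform-mixing bound on transition probabilities with geometric decay of $\pr_v(\tau_W>t)$ coming from the capacity hypothesis. Your caveat about the precise constant $4/c^{2}$ is fair---routing through \cref{cl:cap:mix} yields only $c/3$ per window and hence a larger multiple of $c^{-2}$---but this is inessential for the paper, which only ever invokes the claim to obtain a finite $\psi=\theta+O(c^{-2})$.
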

	
	\begin{proof}
	This follows by exactly the same proof as in \cite[Claim 3.14]{MNS}. 
	\end{proof}

\begin{claim} \label{claim:bubblezeta} Let $\{ G_n \}$ be a sequence of graphs satisfying \cref{assn:main} and let $\zeta > 0$ be given. Then
\begin{align*}
\mathcal{B}_{\zeta^{-1} n^{1/2}}(G_n) \leq \theta + 2\zeta^{-2}.
\end{align*}
\end{claim}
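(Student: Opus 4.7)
The plan is to split the sum defining $\mathcal{B}_{\zeta^{-1}\sqrt{n}}(G_n)$ at the threshold $t=\sqrt{n}$ and handle the two ranges with different tools: short times are controlled by the uniform bound on the expected intersection sum provided by part 1 of \cref{assn:main}, while long times are controlled by the mixing time assumption together with the exponential tail of the geometric killing time.

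For the short range $0 \le t \le \sqrt{n}$, I would simply discard the event $\{T_{\zeta^{-1}\sqrt{n}} > t\}$, so that $\pvec{\zeta^{-1}\sqrt{n}}^t(v,v) \le p_t(v,v)$. Part 1 of \cref{assn:main} then immediately bounds the short-range contribution by $\theta$.

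For the long range $t > \sqrt{n}$, I would invoke part 2 of \cref{assn:main}. Since $\tmix(G_n) = o(n^{1/2-\alpha})$, for all sufficiently large $n$ every $t \ge \sqrt{n}$ satisfies $t \ge \tmix(G_n)$, and the definition of $\tmix$ gives $\sup_v p_t(v,v) \le 3/(2n) \le 2/n$ (this extends to all $t \ge \tmix$ by the spectral identity $p_t(v,v) = n^{-1}\sum_i \lambda_i^t$ valid for a transitive graph, combined with the non-negativity of the lazy eigenvalues). Using that $T_{\zeta^{-1}\sqrt{n}}$ is geometric with mean $\zeta^{-1}\sqrt{n}$, so $\pr(T_{\zeta^{-1}\sqrt{n}} > t) \le (1 - \zeta/\sqrt{n})^t$, together with the elementary identity $\sum_{t \ge 0}(t+1) r^t = (1-r)^{-2}$, the long-range contribution is bounded by
\[
\frac{2}{n} \cdot \frac{1}{(\zeta/\sqrt{n})^2} = \frac{2}{\zeta^2}.
\]
Adding the two estimates yields the claimed bound $\theta + 2\zeta^{-2}$. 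There is no serious obstacle; the only point needing care is the monotonicity argument used to propagate the mixing bound from $t = \tmix(G_n)$ to all $t \ge \sqrt{n}$, which is transparent in the vertex-transitive lazy setting.
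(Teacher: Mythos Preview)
Your proposal is correct and follows essentially the same argument as the paper: split the sum at $t=\sqrt{n}$, bound the short range by $\theta$ via part~1 of \cref{assn:main} after dropping the killing factor, and bound the long range using $p_t(v,v)\le 2/n$ for $t\ge \tmix$ (this is exactly \eqref{eq:hitting:prob:near:station}) together with the identity $\sum_{t\ge 0}(t+1)(1-x)^t=x^{-2}$. Your additional spectral justification for propagating the mixing bound is fine but unnecessary, since the paper already records this as \eqref{eq:hitting:prob:near:station}.
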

\begin{proof}
Take any $v \in G_n$. Then, similarly to \cite[Claim 3.14]{MNS}, since $\sum_{t=0}^{\infty}(t+1) \left( 1 - x \right)^t = x^{-2}$ and using \eqref{eq:hitting:prob:near:station},
\begin{align*}
\mathcal{B}_{\zeta^{-1} n^{1/2}}(G_n) = \sum_{t=0}^{\infty}(t+1) \pvec{}^t(v,v) \left( 1 - \zeta n^{-1/2} \right)^t &\leq \theta + \frac{2}{n}\sum_{t=\sqrt{n}}^{\infty}(t+1) \left( 1 - \zeta n^{-1/2} \right)^t \leq \theta + 2\zeta^{-2}.\qedhere
\end{align*}
\end{proof}

\subsection{Stochastic domination properties}\label{sctn:stoch dom lemmas}
	The $\UST$ enjoys the negative correlation property, i.e., the probability that an edge $e$ is such that $e \in \UST(G)$ conditioned on $f\in \UST(G)$ for some other edge $f$ is no more than the unconditional probability. Moreover, Feder and Mihail showed that for every increasing event $\mathcal{A}$ that ignores $f$, the probability of $\mathcal{A}$ given $f\in \UST(G)$ is no more than the unconditional probability. This led to the following result.
	
	\begin{lemma}\cite[Lemma 10.3]{LyonsPeres}.\label{lem:stoch dom subsets}
		Let $G$ be a connected subgraph of a finite connected graph $H$. Then, $\UST(G)$ stochastically dominates $\UST(H)\cap E(G)$. 		
	\end{lemma}
	
	
	The same proof leads to a slightly more generalized version.
	
	\begin{lemma}\label{lem:stoch dom add sun}
		Let $(G,w)$ be a weighted network and suppose that $(H,w')$ is a network such that $V(G) \subseteq V(H)$ and that for every edge $(v,u)$ with $w((v,u))  \neq 0$ we have $w((v,u)) = w'((v,u))$. Then, $\UST(G)$ stochastically dominates $\UST(H)\cap E(G)$.
	\end{lemma}

	Later in the paper, we will apply \cref{lem:stoch dom add sun} in the following context. To study $\UST (G)$ using Wilson's algorithm, it will sometimes be convenient to add an extra vertex to $G$ called the sun, and for every vertex $v \in G$ add an extra edge from $v$ to the sun. We give well-chosen weights to these new edges and call the new graph the \textbf{sunny graph}. \cref{lem:stoch dom add sun} tells us that the UST of the sunny graph, intersected with $E(G)$, is stochastically dominated by $\UST(G)$. This idea was previously used in \cite{Wil96} and \cite{PeresRevelleUSTCRT}.


	We will also make use of the following well-known lemma. Here $G/A$ denotes the graph obtained from $G$ by identifying all vertices in $A$ with a single vertex.
	
	\begin{lemma}\cite[Exercise 10.8]{LyonsPeres}.\label{lemma: stoch dom con}
		Let $(G,w)$ be a finite network. Let $A\subseteq B$ be two sets of vertices. Then, $\UST(G/A)$ stochastically dominates $\UST(G/B)$.
	\end{lemma}

	Lastly, let $W$ be a set of vertices, and let $A_1$ and $A_2$ be disjoint subsets of $W$. In what follows we consider $\UST(G/W)$. Given an integer $k$ and $j\in \{1,2\}$, let $I_j(k)$ denote the vertices of $G$ that are connected to $W$ in $\UST(G/W)$ by a path of length $k$ such that the last edge on the path to $W$ is an edge that one of its original endpoints belonged to $A_j$ (including $A_j$ itself). Also, let $X_j = X_j(k) = |I_j(k)|$.
	
	\begin{claim}\label{cl: expected stoch dom}
		Let $G$ be a finite connected graph, take any $k\geq 1$ and let $W, A_1, A_2$ be as above. Then, for $\UST(G/W)$ and for every $M>0$,
		\begin{equation*}
			\E\left[X_2 \middle\vert X_1 \leq M\right] \geq \E\left[X_2\right].
		\end{equation*}
	\end{claim}
	
	\begin{proof}
		We will first show that for every $v\in G$, the events $\{X_1 > M\}$ and $\{v\in I_2\}$ are negatively correlated. 
		Fix some $v\in G$ such that $v\in I_2$ has positive probability. Condition on $v\in I_2$ and on $\gamma_2$, the path from $v$ to $A_2$. The $\UST$ conditioned on $W$ and $\gamma_2$ has the distribution of $\UST(G/(W\cup\gamma_2))$. 
		Hence, by \cref{lemma: stoch dom con} we have that $\UST(G/(W\cup\gamma_2))$ is dominated by $\UST(G/W)$. 
		Therefore, by Strassen's theorem \cite{Strassen}, there exists a coupling of the two measures such that $\UST(G/(W\cup\gamma_2)) \subseteq \UST(G/W)$. This means that every vertex connected to $W$ through $A_1$ by a path of length at most $k$ in $\UST(G/(W\cup\gamma_2))$ will also be connected by the same path to $A_1$ in $\UST(G/W)$. Therefore, we have that 
		\begin{equation*}
			\pr\left(X_1 > M \mid v \in I_2(k), \gamma_2 \right) \leq \pr(X_1 > M).
		\end{equation*}
		Then by averaging over $\gamma_2$ and taking complements we obtain 
		\begin{equation*}
			\pr\left(X_1 \leq M \mid v \in I_2(k) \right) \geq \pr(X_1 \leq M).
		\end{equation*}
		Therefore, inverting using Bayes' rule, we have for every $v$ with $\pr(v\in I_2(k)) >0$ that
		\begin{equation*}
			\pr(v\in I_2(k) \mid X_1 \leq M) \geq \pr(v \in I_2(k)).
		\end{equation*}
		Summing over $v$ yields the result.
	\end{proof}
	
\section{The lower mass bound}\label{sec:high-level}

The starting point of the proof of \cref{thm:maingeneral} is the work of Peres and Revelle \cite{PeresRevelleUSTCRT}.
\begin{theorem}\cite[Theorem 1.2]{PeresRevelleUSTCRT}.\label{thm:pr04} Let $\{G_n\}$ be a sequence of graphs satisfying \cref{assn:main} and let $\T_n$ be $\UST(G_n)$. Denote by $d_{\T_n}$ the graph distance on $\T_n$ and by $(\T,d,\mu)$ the CRT. Then there exists a sequence $\{\beta_n\}$ satisfying $0<\inf_n \beta_n \leq \sup_n \beta_n < \infty$ such that the following holds. For fixed $k \geq 1$, if $\{x_1,\ldots, x_k\}$ are uniformly chosen independent vertices of $G_n$, then the distances 
$$ \frac{d_{\T_n}(x_i,x_j)}{\beta_n \sqrt{n}} $$
converge jointly in distribution to the ${k \choose 2}$ distances in $\T$ between $k$ i.i.d.~points drawn according to $\mu$. 
\end{theorem}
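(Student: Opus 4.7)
The plan is to use Wilson's algorithm to build the subtree $\T_n^{(k)}$ of $\T_n$ spanned by $\{x_1, \ldots, x_k\}$ step by step, and to identify its scaled edge lengths with those of Aldous's stick-breaking construction of the CRT. Order the vertices so that $x_1, \ldots, x_k$ come first; set $\T_n^{(1)} = \{x_1\}$, and for each $j = 2, \ldots, k$ let $\gamma_j = \LE(X^{(j)}[0, \tau_j])$, where $X^{(j)}$ is a lazy SRW from $x_j$ and $\tau_j = \tau_{\T_n^{(j-1)}}$, then set $\T_n^{(j)} = \T_n^{(j-1)} \cup \gamma_j$. The $\binom{k}{2}$ pairwise distances in $\T_n^{(k)}$ are determined by the branch lengths $L_j := |\gamma_j|$ and the attachment point of each $\gamma_j$ in $\T_n^{(j-1)}$, so the problem reduces to joint convergence of $(L_j / (\beta_n \sqrt{n}), \mathrm{attach}_j)_{j=2}^k$ to the corresponding data in Aldous's stick-breaking construction.

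For the base case $k = 2$, I would first show $\tau_2 / n \Rightarrow \mathrm{Exp}(1)$ using Assumption~\ref{assn:main}(2): decompose time into blocks of length $O(\tmix \log n) = o(n^{1/2 - \alpha}\log n)$, use \eqref{eq:tvDecay} to see that at the end of each block the walk is within $n^{-10}$ total-variation of uniform so each block hits $x_2$ with probability $(1+o(1))\tmix \log n / n$ nearly independently of its predecessors, and sum over a geometric number of blocks. For the length, applying \cref{cl:indep:loops} directly to the walk killed at $\tau_2$ is not effective because $\{x_2\}$ has very small $\sqrt{n}$-capacity; instead I would decompose the walk into independent excursions of $\mathrm{Geom}(\zeta_0 \sqrt{n})$ length, each contributing an approximately i.i.d.\ LERW segment with bounded expected increment by \cref{claim:bubblezeta} and \cref{cl:indep:loops}, and concatenate them until $x_2$ is hit. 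A second-moment computation using \cref{lem:mk} and \cref{lem: var def of k-cap} to control capacities of the evolving LERW prefix should yield $L_2^2 / (C_n \tau_2) \to 1$ in probability for a deterministic sequence $C_n$ bounded away from $0$ and $\infty$. Combined with $\tau_2/n \Rightarrow \mathrm{Exp}(1)$, this identifies $L_2/\sqrt{C_n n}$ as the Rayleigh-type CRT two-point distance, and we take $\beta_n \asymp \sqrt{C_n}$.

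For $k \geq 3$ I would proceed by induction on $j$. Given $\T_n^{(j-1)}$ of size $\Theta(\beta_n \sqrt{n})$ with high probability, \cref{lem:mk} and \cref{lem: var def of k-cap} give $\Capp_{\sqrt{n}}(\T_n^{(j-1)}) \asymp |\T_n^{(j-1)}|/\sqrt{n}$, so $\tau_j$ is of order $n^{3/2}/|\T_n^{(j-1)}|$, exactly the scaling of the $j$-th stick in Aldous's construction. Via \cref{cl:cap:mix} the walk decouples from its starting vertex on a time scale much shorter than $\tau_j$, so by transitivity the attachment point on $\T_n^{(j-1)}$ is approximately uniform with respect to the length measure, matching Aldous's uniform-attachment rule. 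The conditional distribution of $L_j$ is analyzed as in the base case, but now $\T_n^{(j-1)}$ has macroscopic $\sqrt{n}$-capacity, so \cref{claim: cap to bub UB} applies directly and the scaling analysis is cleaner than for $k = 2$.

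The main obstacle is pinning down the \emph{exact} conditional distribution of $L_j$ rather than merely its scale: second-moment estimates give tightness and match first and second moments but do not by themselves identify the full conditional law. A natural route is to compute the Laplace transform $\mathbb{E}[\exp(-\phi L_j) \mid \T_n^{(j-1)}]$ using the bubble-terminated decomposition of \cref{cl:indep:loops} and compare with the explicit transform of the $j$-th stick in the Poisson $t\,dt$ process defining Aldous's CRT. This requires uniform control over good realizations of $\T_n^{(j-1)}$ via capacity estimates from \cref{sec:prelim}, together with a law-of-large-numbers averaging over which walk increments contribute to the loop erasure; this is where the bubble-sum bounds and the mixing assumption do the most technical work.
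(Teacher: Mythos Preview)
This theorem is not proved in the present paper; it is quoted verbatim as Theorem~1.2 of Peres--Revelle \cite{PeresRevelleUSTCRT} and used as a black box. The paper's own contribution is the lower mass bound (\cref{thm:lowermassboundUSTs}), which together with this cited result upgrades Gromov-weak convergence to GHP convergence via \cref{prop:lower mass bound}. There is therefore no ``paper's own proof'' to compare your proposal against.

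That said, your sketch does follow the broad strategy of Peres--Revelle themselves: run Wilson's algorithm from $x_1,\ldots,x_k$ and match the branch lengths and attachment points to Aldous's stick-breaking construction. Two comments on the sketch as a proof outline. First, the base case $k=2$ is indeed the delicate one, and your geometric-killing decomposition is essentially the passage to the sunny network $G_n^*(\zeta)$ that Peres--Revelle (and this paper, in \cref{prop: coupling with path}) use; however, the assertion ``$L_2^2/(C_n\tau_2)\to 1$ in probability'' is not what is needed. The Rayleigh limit for $L_2/(\beta_n\sqrt{n})$ does not follow from $\tau_2/n\Rightarrow\mathrm{Exp}(\lambda)$ together with a law of large numbers $L_2\sim\sqrt{C_n\tau_2}$: that would give $L_2^2$ exponential, which is correct, but one must actually identify the joint limit rather than a conditional ratio, and your second-moment computation does not do this. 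Second, the claim that the attachment point on $\T_n^{(j-1)}$ is approximately uniform in the length measure needs more than mixing plus transitivity: it requires that harmonic measure on a loop-erased path, seen from a well-mixed starting point, is close to counting measure. Peres--Revelle establish this through an explicit argument on the sunny network rather than by a soft appeal to \cref{cl:cap:mix}. Your final paragraph correctly locates the real difficulty (pinning down the exact conditional law of $L_j$), but the proposed Laplace-transform route is not how Peres--Revelle proceed, and you have not supplied a mechanism for computing it.
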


For the proof of \cref{thm:maingeneral} we take the same sequence $\beta_n$ guaranteed to exist by the theorem above. 
As we shall see in \cref{sec:abstract}, the convergence of \cref{thm:pr04} is equivalent to what is known as Gromov-weak convergence, which does not imply GHP convergence. In order to close this gap in this abstract theory, Athreya, L\"ohr and Winter \cite[Theorem 6.1]{AthreyaLohrWinterGap} introduced the \emph{lower mass bound} condition and proved that this condition together with Gromov-weak convergence is in fact equivalent to GHP convergence; we discuss this further in \cref{sec:abstract}. The main effort in this paper is proving that the lower mass bound holds under \cref{assn:main}; this is the content of the following theorem. 

\begin{theorem} \label{thm:lowermassboundUSTs} Let $\{ G_n \}$ be a sequence of graphs satisfying \cref{assn:main} and let $\T_n$ be $\UST(G_n)$. For a vertex $v\in\T_n$ and some $r\geq 0$ we write $B_{\T_n}(v,r) = \{u : d_{\T_n}(v,u)\leq r\}$ where $d_{\T_n}$ is the intrinsic graph distance metric on $\T_n$. Then for any $c>0$ and any $\delta>0$ there exists $\eps>0$ such that for all $n\geq 1$,
$$ \pr \big ( \exists v \in \T_n : |B_{\T_n}(v, c\sqrt{n})|\leq \eps n \big) \leq \delta.$$In other words, the random variables $\big\{\max_v \{n |B_{\T_n}(v, c\sqrt{n})|^{-1}\}\big\}_n$ are tight.
\end{theorem}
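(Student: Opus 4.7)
The plan is to first use transitivity of $G_n$ to reduce to showing that, for any fixed vertex $v \in G_n$,
\begin{equation*}
\pr \big( |B_{\T_n}(v, c\sqrt{n})| \leq \eps n \big) \leq n^{-2},
\end{equation*}
which yields the theorem by a union bound over the $n$ vertices of $G_n$. We sample $\T_n$ using Wilson's algorithm, starting with a bubble-terminated LERW $\gamma$ launched from $v$ (via \cref{claim:bubblezeta}, with geometric killing of mean a large multiple of $\sqrt{n}$). We then focus on the prefix $\gamma'$ of $\gamma$ of graph length $s := c\sqrt{n}/4$. From \cref{thm:gammaisnice} and \cref{thm:capacityofprefix} I extract, on an event of probability at least $1 - n^{-3}$, the lower bound $\Capa_{\sqrt{n}}(\gamma', \gamma \setminus \gamma') \geq \kappa$ for some constant $\kappa > 0$ independent of $n$.

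Set $L := \lceil C \log n \rceil$ for a large constant $C$ to be chosen. On the good capacity event, \cref{lem:finddisjoint} (with $m := \kappa/(2L)$ and $k := \sqrt{n}$) produces $L$ disjoint subsets $A_1, \dots, A_L$ of $\gamma'$ satisfying $\Capa_{\sqrt{n}}(A_i, \gamma \setminus A_i) \in [\kappa/(2L), \kappa/(2L) + 1/\sqrt{n}]$. Completing Wilson's algorithm, let $I_i$ be the set of vertices connected to $A_i$ in $\T_n$ by a tree-path of length at most $r := c\sqrt{n}/2$ avoiding $\gamma \setminus A_i$, and set $X_i := |I_i|$. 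Any $u \in I_i$ lies within $s + r \leq c\sqrt{n}$ of $v$ in $d_{\T_n}$, so $|B_{\T_n}(v, c\sqrt{n})| \geq \sum_{i=1}^{L} X_i$, and it suffices to prove $\sum_i X_i \geq \eps n$ with probability at least $1 - n^{-2}$.

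For this concentration, I first aim to show $\E[X_i \mid \gamma] \geq c_0 n / L$ on the good event, for some constant $c_0 > 0$: the probability that the Wilson LERW from a uniformly random vertex first enters $A_i$ within $r$ steps is comparable to $\Capa_r(A_i, \gamma \setminus A_i)$, which is in turn comparable to the $\sqrt{n}$-capacity via \cref{cl:cap:mix} since $r \gg \tmix \log n$ by \cref{assn:main}. Second, I aim to establish a second-moment bound $\E[X_i^2 \mid \gamma] \leq C_1 \E[X_i \mid \gamma]^2$; the Paley--Zygmund inequality then yields $\pr(X_i \geq c_0 n/(2L) \mid \gamma) \geq \eta$ for a constant $\eta > 0$. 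Applying \cref{lem: negative correlation}(ii) with $\phi := L/(c_0 n)$ gives $\E[e^{-\phi X_i} \mid \gamma] \leq 1 - \eta(1 - e^{-1/2})$, and hence by negative correlation $\E[e^{-\phi \sum_i X_i} \mid \gamma] \leq e^{-\eta(1-e^{-1/2}) L}$. A Markov step with $\eps$ a sufficiently small absolute constant yields $\pr(\sum_i X_i \leq \eps n \mid \gamma) \leq e^{-c'' L}$; taking $C$ large enough makes this at most $n^{-3}$, finishing the argument.

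The main obstacle is the conditional second-moment bound $\E[X_i^2 \mid \gamma] \leq C_1 \E[X_i \mid \gamma]^2$. Writing $X_i^2 = \sum_{u, u'} \mathbbm{1}\{u, u' \in I_i\}$, one must show that for two independent uniform vertices $u, u'$, the joint probability that both of their Wilson LERWs first hit $A_i$ within $r$ steps exceeds the product of marginal probabilities by at most a constant factor. This should follow by combining the closeness bound \eqref{eqn:expected closeness} applied to the trajectory of the first Wilson LERW, together with control over the LERW length via the bubble-sum estimates \cref{cl:indep:loops} and \cref{claim: cap to bub UB}, though the estimates are delicate because the two LERWs may interact both with each other and with the conditioned structure of $\gamma$.
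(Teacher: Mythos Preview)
Your plan has a genuine gap at the very first reduction. You assert that \cref{thm:gammaisnice} and \cref{thm:capacityofprefix} yield the capacity lower bound $\Capa_{\sqrt{n}}(\gamma',\gamma\setminus\gamma')\geq\kappa$ on an event of probability $1-n^{-3}$, but neither theorem gives anything close to this at a fixed macroscopic scale. \cref{thm:gammaisnice} guarantees the event $\Enc$ only with probability $1-\delta$ for a fixed constant $\delta$, and \cref{thm:capacityofprefix} at scale $\ell=0$ (so $r_\ell=c\sqrt n$, $\eps_\ell=\eps$) gives failure probability $e^{-a(\log\eps^{-1})^2}$, again a fixed constant in $n$. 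Moreover, your $\gamma$ is a geometric-killed LERW from $v$, which is not the setup of either theorem; and regardless of construction, the branch from $v$ has length below $c\sqrt n/4$ with positive probability independent of $n$, so the event ``$\gamma'$ exists'' already fails with probability bounded away from zero. This is precisely the obstruction the paper isolates in \cref{sctn:bootstrap}: the single-scale local bound on \eqref{eq:mainbound} is only sub-polynomial in $\eps$, never polynomially small in $n$, and the bootstrap over dyadic scales $\ell=0,\ldots,N_n$ is introduced exactly to convert sub-polynomial-in-$\eps$ bounds into an $o(1/n)$ bound at the bottom scale $\ell=N_n$ (where $\eps_{N_n}\sim n^{-\alpha/5}$), while propagating information back up via the ``if the big ball is small then many smaller balls are small'' dichotomy.

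There is a second, independent problem with your concentration step. You need $\pr(X_i\geq c_0 n/(2L)\mid\gamma)\geq\eta$ for a \emph{constant} $\eta$, but with $L\asymp\log n$ pieces of a path of length $\Theta(\sqrt n)$ and path-length parameter $r=c\sqrt n/2$ in the definition of $I_i$, the variance bound from \cite{Hutchcroft2020universality} (used in \cref{lem:volLBfromCap}) gives $\Var(X_i\mid\gamma)\leq\secp^2|A_i|(r+1)^3=O(n^2)$ while $\E[X_i\mid\gamma]^2=O((n/\log n)^2)$, so Paley--Zygmund only delivers $\eta=O((\log n)^{-2})$ and the exponential bound $e^{-\eta' L}$ does not decay. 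The paper avoids this by taking a much smaller capacity-scale $k_\ell=\eps_\ell^{2/3}r_\ell$ and restricting to sets $A_j$ with $|A_j|\leq\eps_\ell^{1/6}r_\ell$, which rebalances the variance against the mean (see the proof of \cref{lem:volLBfromCap}); your parameters $k=\sqrt n$, $r=c\sqrt n/2$, $L=C\log n$ do not achieve this balance.
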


In the rest of this section we prove \cref{thm:lowermassboundUSTs}, delegating parts of the proof to \cref{sctn:cap bounds} and \cref{sctn:proof on conditional sunny graph}. For the rest of this section as well as \cref{sctn:cap bounds}, $\{ G_n \}$ is a sequence of graphs satisfying \cref{assn:main} and $\T_n$ is $\UST(G_n)$.

\subsection{Bootstrap argument}\label{sctn:bootstrap}
The main difficulty in \cref{thm:lowermassboundUSTs} is that it is global; that is, it requires a lower tail bound on the volumes of the balls around \emph{all} vertices simultaneously. Our approach is to prove a strong enough local version of this bound, that is, a bound for a single vertex, and use a bootstrap argument to obtain a weaker (yet sufficient) global bound. The idea is to use the observation that if there is one vertex $x \in \T_n$ such that $|B(x, r)|$ is small, then either $|B(x, \frac{r}{2})|$ is also small, or otherwise there are many vertices $v \in B(x, \frac{r}{2})$ such that $|B(v, \frac{r}{2})|$ is small. Provided that these two latter events are sufficiently less likely than the former, this allows us to define a sequence of events on the balls of dyadic radii $|B(x, \frac{r}{2^\ell})|$, each with strictly stronger tail decay than the previous one. We will iterate this 
observation enough times until the probability of the final event is $o\left(\frac{1}{n}\right)$, at which point we will apply the union bound and conclude the proof. 

Thus, our goal will be to iteratively improve the bounds on 
\begin{equation}\label{eq:mainbound} \pr \left(\left|B\left(x, \frac{c \sqrt{n}}{2^\ell}\right)\right| \leq {\eps \over 4^\ell} \left( \frac{c \sqrt{n}}{2^\ell}\right)^2\right) \, ,\end{equation}
where $x$ is a fixed vertex (our graphs are transitive so the choice of $x$ does not matter) and $\ell = 0,\ldots, N_n$ where $N_n$, the number of iterations, will be chosen suitably as we now explain.

Since we will use Wilson's algorithm to sample branches in $\UST(G_n)$, it will be important in our arguments in \cref{sctn:cap bounds} that the radius $\frac{c \sqrt{n}}{2^l}$ we consider at each step is significantly longer than the mixing time of a random walk on $G_n$. Therefore, we require that $\frac{c \sqrt{n}}{2^{N_n}} \gg n^{\frac{1}{2} - \alpha}$ (recall the constant $\alpha$ from \cref{assn:main}), so the number of iterations $N_n$ can be at most of order $\log n$. We will see in the proof of \cref{thm:lowermassboundUSTs} that for this bootstrap argument to work with only $\log n$ steps, it will be convenient to obtain bounds on \eqref{eq:mainbound} that are sub-polynomial in $\eps$.

A natural strategy to bound the probability in \eqref{eq:mainbound} is to first sample a single branch joining $x$ to a pre-defined root of $\UST(G_n)$, consider the volumes of balls in subtrees attached to this branch close to $x$, and show that the sum of these volumes is very unlikely to be too small. This strategy almost gives sufficiently strong tail decay, but there is one step at which the tail decay is not sub-polynomial. This problem arises in the first step since there is a probability of order $\eps$ that the path joining $x$ to a root vertex is of length less than $\sqrt{\eps n}$.
	
This is not a fundamental problem since if this path is short, then it means we just picked a short branch when longer branches to different roots were available. However, it is not convenient to condition on picking a long branch to a well-chosen root since this conditioning reveals too much information about $\UST (G_n)$, which makes it difficult to control other properties of the branch, primarily its capacity and the capacity of its subsets. It is also inconvenient (though probably possible) to continue choosing a few more branches until we reach a certain length.

The simplest way we found to circumvent this issue is to first sample a branch $\Gamma_n$ between two uniformly chosen vertices of $G_n$ and perform the bootstrap argument discussed above conditioned on $\Gamma_n$ and the event that it is a ``nice'' path, a property we will define later that will include, amongst others, the event that $\Gamma_n$ is not too short. Then, using Wilson's algorithm we may sample other branches of $\UST(G_n)$ by considering loop-erased random walks terminated at $\Gamma_n$; thus $\Gamma_n$ can be thought of as the backbone of $\UST (G_n)$, and provided $\Gamma_n$ is sufficiently long we can sample the branch from $x$ to $\Gamma_n$ and consider its extension into $\Gamma_n$ to make it longer if necessary. With this modified definition of a branch, it is then possible to prove a conditional sub-polynomial tail bound in $\eps$ for \eqref{eq:mainbound}, and then to prove \cref{thm:lowermassboundUSTs} by decomposing according to whether $\Gamma_n$ is ``nice'' or not.

Throughout the rest of this paper, and in accordance with \cref{thm:lowermassboundUSTs}, we fix $c>0$ to be a small enough parameter and $\eps>0$ which can also be chosen to be small enough depending on $c$ and set
\begin{equation}\label{eq:parameters1} N_n = \frac{\alpha}{10} \log_2 n \qquad r = c\sqrt{n},\end{equation}
and for any scale $\ell \in \{0,\ldots, N_n\}$ 
\begin{equation}\label{eq:parameters2} r_\ell = \frac{r}{2^\ell} \qquad \epsilon_\ell = \frac{\epsilon}{4^\ell} \qquad k_\ell = \eps_\ell^{1/2} r_\ell \, .\end{equation}

\begin{theorem}\label{thm:gammaisnice} 
Let $\{ G_n \}$ be a sequence of graphs satisfying \cref{assn:main}, let $\T_n$ be $\UST(G_n)$ and denote by $\Gamma_n$ the unique path between two independent uniformly chosen vertices. Then for any $\delta>0$ there exist $c', \eps'>0$ such that for all $c\in(0,c')$ and all $\eps\in(0,\eps')$ there exists $N = N(\delta, c, \eps)$ such that for any $n\geq N$ we have that,	with probability at least $1-\delta$,
			\begin{enumerate}
			\item[$\mathrm{(I)}$] $\Capp_{\sqrt{n}} (\Gamma_n) \geq 2c$,
			\item[$\mathrm{(II)}$] For any scale $\ell \in \{0,\ldots, N_n\}$ and subsegment $I\subseteq \Gamma_n$ with $|I| = r_\ell/3$ we have that $$\Capkl(I, \Gamma_n \setminus I) \geq \frac{\eps_{\ell}^{1/6} k_\ell r_\ell}{n} \, ,$$
			\item[$\mathrm{(III)}$] $|\Gamma_n| \leq \epsilon^{-\frac{1}{10}}\sqrt{n}$.
		\end{enumerate}
\end{theorem}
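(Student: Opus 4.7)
The plan is to use Wilson's algorithm to sample $\Gamma_n$ as the loop-erasure of a random walk $X$ from one uniform vertex $u$ to the other uniform vertex $v$, and to establish each of (I), (II), (III) with probability at least $1-\delta/3$; the conclusion then follows by a union bound.

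Part (III) is immediate from \cref{thm:pr04}: $|\Gamma_n|/(\beta_n\sqrt n)$ converges in distribution to the distance in the CRT between two i.i.d.\ $\mu$-points, whose law has finite expectation. Markov's inequality thus gives $\pr(|\Gamma_n|>\eps^{-1/10}\sqrt n)\leq C\eps^{1/10}<\delta/3$ for $\eps$ small. Part (I) uses the variational lower bound \cref{lem: var def of k-cap}: $\Capp_{\sqrt n}(\Gamma_n)\geq \sqrt n\,|\Gamma_n|^2/(2n M^{(\sqrt n)}(\Gamma_n))$. By \cref{thm:pr04} one has $|\Gamma_n|\geq c_0\sqrt n$ with probability $\geq 1-\delta/6$ for a universal $c_0>0$. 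For the denominator, $M^{(\sqrt n)}(\Gamma_n)\leq M^{(\sqrt n)}(X[0,T'])$ for a suitable truncation $T'$; combining \cref{cl:indep:loops} with \cref{claim:bubblezeta} (killing the walk at a geometric time of mean $\sqrt n$ produces an $O(1)$ bubble-sum) shows $\E[T']\leq C|\Gamma_n|$, and \cref{lem:mk} together with \cref{assn:main}(1) yields $\E[M^{(\sqrt n)}(X[0,T'])]\leq C'\sqrt n$. Markov gives $M^{(\sqrt n)}(\Gamma_n)\leq C''\sqrt n$ with probability $\geq 1-\delta/6$, so the variational bound produces $\Capp_{\sqrt n}(\Gamma_n)\geq c_0^2/(2C'')\geq 2c$ once $c$ is chosen small enough.

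Part (II) is the main effort. Fix a scale $\ell\leq N_n$ and a subsegment $I\subset\Gamma_n$ of length $r_\ell/3$, and use the decomposition
\[
\Capkl(I,\Gamma_n\setminus I)\geq \Capkl(I)-\close_{k_\ell}(I,\Gamma_n\setminus I).
\]
The first term is lower bounded by $\gtrsim k_\ell r_\ell/n$ by applying the same variational/$M^{(k_\ell)}$-estimate as in Part (I) to the walk piece producing $I$ (using that Wilson's algorithm can be arranged to sample $I$ separately from $\Gamma_n\setminus I$). The closeness is bounded above through $\close_{k_\ell}(I,\Gamma_n\setminus I)\leq \close_{k_\ell}(I,X')$, where $X'$ is the walk producing $\Gamma_n\setminus I$ (for which $\E|X'|\leq C\sqrt n$ by the same bubble argument), whereupon \eqref{eqn:expected closeness} gives an expected closeness of order $k_\ell^2 r_\ell/n^{3/2}$, which is an $O(\eps_\ell^{2/3})$ fraction of $\Capkl(I)$. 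Thus in expectation the closeness is much smaller than $\Capkl(I)$, leaving ample room for the target $\eps_\ell^{1/6} k_\ell r_\ell/n$.

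The principal obstacle is promoting these per-subsegment estimates to simultaneous bounds over the $O(n^{\alpha/10})$ pairs $(\ell,I)$ that the bootstrap in \cref{sctn:bootstrap} will demand. A naive Markov bound on either $M^{(k_\ell)}(I)$ or $\close_{k_\ell}$ gives only $\eps^{O(1)}$ decay, which is insufficient after union bounding over all scales and subsegments; polynomial-in-$n$ concentration is required. I would obtain this through \cref{cl:indep:loops}: conditioned on the LERW, the increments $\lambda_{i+1}-\lambda_i$ are independent and, being stochastically dominated by geometric variables with bubble-bounded mean, have exponential tails, so Chernoff yields exponential concentration for $T'$ and hence for $M^{(k_\ell)}(I)$, while a parallel second-moment treatment controls $\close_{k_\ell}$. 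The delicate point is decoupling the conditioning on $\Gamma_n$ from these concentration estimates while preserving the bubble-sum structure, which is where the technical work of \cref{sctn:cap bounds} concentrates.
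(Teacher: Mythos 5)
The high-level skeleton matches the paper: the decomposition $\Capp_{k_\ell}(I,\Gamma_n\setminus I)\geq\Capp_{k_\ell}(I)-\close_{k_\ell}(I,\Gamma_n\setminus I)$, the use of \cref{lem: var def of k-cap} to turn an $M^{(k)}$ bound into a capacity bound, and the appeal to \cref{lem:mk} and \eqref{eqn:expected closeness} are all what the paper does. Parts (I) and (III) are fine (the paper simply cites earlier results for them, and your redo of (III) via distributional convergence is legitimate, though Markov's inequality alone doesn't follow from convergence in distribution — tightness via the portmanteau theorem is the clean statement). But the core of (II) is missing three essential mechanisms, and the replacement you propose would not work.

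First, you are implicitly relying on the walk that generates $\Gamma_n$ being bubble-terminated with \emph{bounded} bubble-sum (you cite \cref{claim:bubblezeta}), but for the walk from $u$ hit-killed at $v$ the terminating set is a singleton with $\Capp_{\sqrt n}(\{v\})\sim n^{-1/2}$, so \cref{claim: cap to bub UB} gives only an $O(n)$ bubble bound. The paper fixes this with the sunny-graph coupling (\cref{prop: coupling with path}), which replaces $\Gamma_n$ by a path generated from two walks killed at geometric times of mean $\fp^{-1}\sqrt n$; you need this coupling explicitly and never construct it. Second, even after that, your plan to obtain super-polynomial-in-$\eps_\ell$ decay from ``Chernoff on the $\lambda$-increments'' is unsubstantiated: \cref{cl:indep:loops} only gives conditional independence and a bounded first moment, not exponential tails, and in any case $M^{(k_\ell)}$ and $\close_{k_\ell}$ are not monotone functionals of the total walk length, so concentration of $T'$ alone would not control them. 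The paper instead gets the needed decay by splitting the walk trajectory into many sub-segments separated by buffers of length $\gg\tmix$ (so that, by \eqref{eq:tvDecay}, they behave like independent trials) and then observing that each has a small constant failure probability for the $M^{(k)}$ and closeness bounds (\cref{claim:X subinterval good M new}); the product over $\Theta(\log\eps_\ell^{-1})$ trials gives $e^{-\Theta((\log\eps_\ell^{-1})^2)}$. Third, and most structurally, you never bridge between a LERW subsegment $I\subset\Gamma_n$ and a short contiguous piece of the underlying random walk on which $M^{(k)}$ and $\close_k$ are estimable. The paper does this with \cref{claim: LE prefix to SRW interval2}: with probability $1-e^{-\eps_\ell^{-z/3}/\log\eps_\ell^{-1}}$ there is a run of $\Theta(\eps_\ell^{-2z/3}\log\eps_\ell^{-1})$ consecutive LERW increments of $\Gamma$ each consuming at most $\eps_\ell^z r_\ell$ walk steps, so that a LERW subinterval of length $\eps_\ell^{5z/3}r_\ell$ sits inside a walk segment of the controlled length needed by \cref{claim:X subinterval good M new}. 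Without this step, there is no way to invoke the $M^{(k)}$ and closeness estimates for a prescribed LERW subsegment $I$. These three ingredients are where all the work of the paper's \cref{sctn:cap bounds} lies, and your sketch substitutes a single, unjustified concentration heuristic for them.
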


\begin{definition}\label{def:enc} For the rest of this paper, given $c$ and $\eps$ as above we denote by $\Enc$ the intersection of the events in {\rm (I), (II), (III)} of the above theorem.
\end{definition}

\begin{remark} 
The reader may notice that although $c$ is fixed in \cref{thm:lowermassboundUSTs} and \eqref{eq:parameters1}, it is now treated as a variable parameter in \cref{thm:gammaisnice}. This is intentional since we need $|\Gamma_n| \geq c\sqrt{n}$ to overcome the problem of branch length mentioned above, and we cannot ensure this with high probability when $c$ is fixed; only when it is small. To prove \cref{thm:lowermassboundUSTs}, we start with a fixed $c$, but our first step is to reduce it if necessary so that the statement of \cref{thm:gammaisnice} holds as well. We will then prove the theorem with this smaller value of $c$. This poses no problem since the assertion of \cref{thm:lowermassboundUSTs} with smaller $c$ is stronger; this is also discussed in the proof of \cref{thm:lowermassboundUSTs}.
\end{remark}

Next we assume that $\Enc$ holds for some positive $c$ and $\eps$, and let $x$ be a vertex of $G_n$. Let $\Gamma_x$ denote the loop-erasure of the random walk path starting $x$ and stopped when it hits $\Gamma_n$ (if $x\in \Gamma_n$ then $\Gamma_x$ is empty). For an integer $s\in(0,c\sqrt{n})$ we denote by $\Gamma_x^s$ the prefix of $\Gamma_x$ of length $s$ as long as $|\Gamma_x|\geq s$; otherwise, i.e. if $|\Gamma_x|<s$, we denote by $\Gamma_x^s$ the prefix of the path in $\UST(G_n)$ from $x$ to one of the two endpoints of $\Gamma_n$ such that this path has length at least $s$. This is possible since by part (I) of \cref{thm:gammaisnice} and \eqref{eqn:cap UB}, if $\Enc$ holds, then $|\Gamma_n| \geq 2c\sqrt{n}$. If the two endpoints of $\Gamma_n$ can be used, we choose one in some arbitrary predefined manner. In \cref{sctn:capacity tail decay} we will prove the following. 
	
	\begin{theorem}\label{thm:capacityofprefix} Let $\{ G_n \}$ be a sequence of graphs satisfying \cref{assn:main} and let $\T_n$ be $\UST(G_n)$. Denote by $\Gamma_n$ the unique path between two independent uniformly chosen vertices and for a vertex $x \in G_n$ and $s>0$ let $\Gamma^s_x$ be as described above. Then for any $c >0$ there exist $\eps'>0$ and a constant $a > 0$ such that for every $\eps\in(0,\eps')$ there exists $N=N(c,\eps)$ such that for any $n\geq N$ and any $\ell \in \{0,\ldots, N_n\}$ we have
		\[
		\pr \left(\Capkl\left(\Gamma_x^{5r_\ell/6}, (\Gamma_n \cup \Gamma_x) \setminus \Gamma_x^{5r_\ell/6}\right) \leq \frac{\eps_\ell^{\frac{1}{6}} k_\ell r_\ell }{n} \ \mathrm{and} \ \Enc \right) \leq e^{-a (\log \eps_\ell^{-1})^2}.
		\]
	\end{theorem}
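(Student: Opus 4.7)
The plan is to condition on $\Enc$ and analyze $\Gamma_x^{5r_\ell/6}$ via a case split on $|\Gamma_x|$. By property (I) of $\Enc$ and \cref{claim: cap to bub UB}, the walk from $x$ to $\Gamma_n$ is bubble-terminated with bubble sum bounded by $\secp = \theta + c^{-2}$. I would handle the ``short LERW'' case $|\Gamma_x| \leq r_\ell/2$ deterministically: then $\Gamma_x^{5r_\ell/6}$ contains a length-$r_\ell/3$ sub-segment $I$ of $\Gamma_n$, and since $\Gamma_x \subseteq \Gamma_x^{5r_\ell/6}$ we have $(\Gamma_n \cup \Gamma_x) \setminus \Gamma_x^{5r_\ell/6} \subseteq \Gamma_n \setminus I$; a direct event-inclusion argument then yields $\Capkl(\Gamma_x^{5r_\ell/6}, (\Gamma_n \cup \Gamma_x) \setminus \Gamma_x^{5r_\ell/6}) \geq \Capkl(I, \Gamma_n \setminus I) \geq \eps_\ell^{1/6} k_\ell r_\ell/n$, the last inequality by property (II) of $\Enc$.

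In the ``long LERW'' case $|\Gamma_x| > r_\ell/2$, the LERW portion of $\Gamma_x^{5r_\ell/6}$ has length at least $r_\ell/2$. Inside it I would take $L := \lfloor c' \log \eps_\ell^{-1} \rfloor$ disjoint consecutive sub-intervals $\sigma_1, \ldots, \sigma_L$, each of length $r_\ell/(2L)$. The same type of event-inclusion gives $\Capkl(\Gamma_x^{5r_\ell/6}, (\Gamma_n \cup \Gamma_x) \setminus \Gamma_x^{5r_\ell/6}) \geq \max_j \Capkl(\sigma_j, (\Gamma_n \cup \Gamma_x) \setminus \sigma_j)$, and the latter splits as $\Capa_{k_\ell}(\sigma_j) - \close_{k_\ell}(\sigma_j, \Gamma_n \cup \Gamma_x)$. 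For the absolute capacity, \cref{lem: var def of k-cap} reduces the task to upper bounding $M^{(k_\ell)}(\sigma_j)$; since $\sigma_j$ is dominated by the underlying walk segment $\tilde P_j$ producing it, $M^{(k_\ell)}(\sigma_j) \leq M^{(k_\ell)}(\tilde P_j)$, and combining \cref{cl:indep:loops} (the expected length of $\tilde P_j$ is at most $\secp\cdot r_\ell/(2L)$) with \cref{lem:mk} (applied at the underlying walk level) gives $\E[M^{(k_\ell)}(\tilde P_j)] = O(r_\ell/L)$. A Markov inequality then shows the ``bad'' event $\{\Capa_{k_\ell}(\sigma_j) < 2\eps_\ell^{1/6} k_\ell r_\ell/n\}$ has probability $\leq O(L\eps_\ell^{1/6})$ per sub-interval.

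To lift this polynomial-in-$\eps_\ell$ bound to the desired sub-polynomial tail $e^{-a(\log \eps_\ell^{-1})^2}$, I would exploit the conditional independence of the excursions $\tilde P_j$ given $\Gamma_x$ provided by \cref{cl:indep:loops}. Conditioning on $\Gamma_x$ and multiplying yields
\[
\pr\Bigl(\bigcap_{j=1}^L \bigl\{\Capa_{k_\ell}(\sigma_j) \text{ is bad}\bigr\}\Bigr) \leq \E\Bigl[\prod_{j=1}^L \pr\bigl(M^{(k_\ell)}(\tilde P_j) > T \mid \Gamma_x\bigr)\Bigr] \leq (O(L\eps_\ell^{1/6}))^L,
\]
which for $L \asymp \log \eps_\ell^{-1}$ is bounded by $e^{-a(\log \eps_\ell^{-1})^2}$ after a short computation balancing the $(CL)^L$ factor against $\eps_\ell^{L/6}$. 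The closeness $\close_{k_\ell}(\sigma_j, \Gamma_n \cup \Gamma_x)$ is controlled with the same type of super-polynomial probability via \eqref{eqn:expected closeness}, splitting into closeness with $\Gamma_n$ (whose underlying walk is stationary, so \eqref{eqn:expected closeness} applies directly with $|\Gamma_n|$ bounded by property (III) of $\Enc$) and closeness with $\Gamma_x$ (bounded using bubble-sum control of the walk length from $x$).

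The main obstacle is the sub-polynomial tail in the long-LERW case: a single-interval Markov bound gives only polynomial decay in $\eps_\ell$, and the improvement to $e^{-a(\log \eps_\ell^{-1})^2}$ hinges on correctly extracting the conditional independence of the excursion fillings $\tilde P_j$ across $j$ so as to decouple the simultaneous bad events. A secondary subtlety is that the closeness bound must also decay super-polynomially, which forces one to argue more carefully than by a single application of Markov's inequality.
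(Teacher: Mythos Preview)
Your short-LERW case is correct and matches the paper (with the threshold $r_\ell/3$ rather than $r_\ell/2$, which is immaterial). The long-LERW case, however, has a genuine gap in the decoupling step. You want to bound
\[
\E\Bigl[\prod_{j=1}^L \pr\bigl(M^{(k_\ell)}(\tilde P_j) > T \mid \Gamma_x\bigr)\Bigr] \leq \bigl(O(L\eps_\ell^{1/6})\bigr)^L,
\]
which would need the per-factor bound $\pr(M^{(k_\ell)}(\tilde P_j) > T \mid \Gamma_x=\gamma)\le O(L\eps_\ell^{1/6})$ to hold essentially uniformly in $\gamma$. It does not: since $\sigma_j\subset\tilde P_j$ as sets, $M^{(k_\ell)}(\tilde P_j)\ge M^{(k_\ell)}(\sigma_j)$ always, and $M^{(k_\ell)}(\sigma_j)$ is a deterministic function of $\gamma$. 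So on any $\gamma$ for which $M^{(k_\ell)}(\sigma_j)>T$, the conditional probability equals $1$, and the product is $1$. In other words, the independence you extract from \cref{cl:indep:loops} is of the \emph{excursion fillings} given the skeleton, but the quantity you need to control, $M^{(k_\ell)}(\sigma_j)$, is a function of the skeleton itself; passing to $\tilde P_j$ only enlarges the bad event. Separately, \cref{lem:mk} is stated for a simple random walk segment of fixed length, whereas conditional on $\Gamma_x$ the law of $\tilde P_j$ is that of conditioned loops, so the moment bound you invoke is not available in the conditional world either.

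The paper sidesteps this by never conditioning on the LERW. It partitions the \emph{raw} walk $X[0,M_\ell\sqrt{n}]$ into intervals at deterministic times separated by buffers of length $\gg\tmix$; these are genuine simple-random-walk segments (so \cref{lem:mk} and \eqref{eqn:expected closeness} apply), and the buffers give approximate independence via mixing, yielding \cref{claim:X subinterval good M new}. The remaining issue is that a priori a LERW segment could correspond to an arbitrarily long random-walk stretch; \cref{claim: LE prefix to SRW interval2} rules this out with the required $e^{-\eps_\ell^{-z/3}/\log\eps_\ell^{-1}}$ probability, producing a short random-walk window $[t_1,t_2]$ containing a LERW segment of controlled length. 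Inside that window one then picks the good sub-interval $J$ guaranteed by \cref{claim:X subinterval good M new}, and the LERW piece $A\subset X[J]$ inherits both the $M^{(k_\ell)}$ bound and the closeness bound. This two-claim mechanism is exactly what replaces your attempted product over LERW sub-intervals.
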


Given these two estimates we are now ready to proceed with the proof of \cref{thm:lowermassboundUSTs}.
Our strategy is as follows. On the event
	\begin{equation}\label{eq: good paths event}\left\{\Capkl\left(\Gamma_x^{5r_\ell/6}, (\Gamma_n \cup \Gamma_x) \setminus \Gamma_x^{5r_\ell/6}\right) \geq \frac{\eps_\ell^{\frac{1}{6}} k_\ell r_\ell }{n}\right\},
	\end{equation} we can condition on $\Gamma_n$ and $\Gamma_x$ and then apply \cref{lem:finddisjoint} with $m = \frac{2^{13}e\dumr \dumk \dumep^{\frac{1}{2}}}{4n}$ and $s=\frac{\dumr \dumk \dumep^{\frac{1}{6}}}{n}$ to obtain $L = \frac{\dumep^{-1/3}}{2^{12}e}$ disjoint subintervals $A_1, \ldots, A_L$ of $\Gamma_x^{5r_\ell/6}$ such that
	\[
	\Capkl (A_j, (\Gamma_n \cup \Gamma_x) \setminus A_j)) \geq \frac{2^{13}e\dumr\dumk\dumep^{\frac{1}{2}}}{4n} = \frac{2^{11}e\dumep\dumr^2}{n}
	\]
	for all $j=1,\ldots, L$. Moreover, since the cardinality of $\cup_{j=1}^L A_j$ is at most $\frac{5\dumr}{6}$, the number of $j$'s such that $|A_j| \geq 2^{13}e\dumep^{1 \over 3} \dumr$ is at most $\frac{5}{6\cdot 2^{13}e} \dumep^{-\frac13}$. Hence the number of $j$'s for which $|A_j| \leq (2^{13}e)\dumep^{1 \over 3} \dumr$ is at least $(2^{13}e)^{-1}\dumep^{-\frac13}$; we relabel the sets so that $A_i$ for $i=1,\ldots, (2^{13}e)^{-1}\dumep^{-\frac13}$ have this upper bound on their size and forget about the other sets.


	
	Our aim will be to test each of the intervals $\{A_i\}$ in turn to see if the trees hanging on $A_i$ contribute at least $\dumr^2 \dumep$ to $B(x, \dumr)$. We will test these intervals \textit{conditionally} on $\Gamma_x \cup \Gamma_n$ and on the outcome of the previous tests. Here we encounter a significant difficulty since the failure of some past tests introduces a complicated conditioning which we cannot access directly by contracting some edges. 

	
	To overcome this difficulty we proceed as follows. Conditioned on $\Gamma_n \cup \Gamma_x \subset \UST(G_n)$, we contract $\Gamma_n \cup \Gamma_x$ to a single vertex (still remembering the original edge-set) to form the graph $G_n / (\Gamma_n \cup \Gamma_x)$. By the UST spatial Markov property \cite[Proposition 4.2]{benjamini2001}, we have that $\UST(G_n)$ is distributed as the union of $\Gamma_n \cup \Gamma_x$ and the $\UST$ of this new graph. 
	Before proceeding, we then add a new vertex called {\bf the sun}, denoted by $\sunk$, to the graph $G_n / (\Gamma_n \cup \Gamma_x)$, and add an edge from every vertex to the sun with weight chosen so that a lazy random walk on $G_n \cup \{\sunk\} / (\Gamma_n \cup \Gamma_x)$ will always jump to the sun at the next step with probability $\frac{1}{\dumk}$. Then, we identify the sun with $\Gamma_n \cup \Gamma_x$, remembering the edges emanating from the sun. This ensures that when we run Wilson's algorithm on the remaining graph, rooted at the contracted vertex, random walks will always be killed when they hit the sun, so typically they only run for time of order $\dumk$.
	
	On the graph $G_n/(\{\sunk\} \cup \Gamma_n \cup \Gamma_x)$ we will often say \textbf{``hit $\mathbf{A}$"} when $A$ is a subset of $\{\sunk\} \cup \Gamma_n \cup \Gamma_x$. The meaning of hitting $A$ in the graph $G_n/(\{\sunk\} \cup \Gamma_n \cup \Gamma_x)$ is to hit $\{\sunk\} \cup \Gamma_n \cup \Gamma_x$ by traversing an edge whose original endpoint belonged in $A$. In some cases it will be convenient to start a random walk at a uniform vertex $U$ in the original graph $G_n$, and project the start point onto $G_n/(\{\sunk\} \cup \Gamma_n \cup \Gamma_x)$; in this case ``hit $A$" also includes the event $U \in A$.
%
	
	By \cref{lem:stoch dom add sun}, conditionally on $\Gamma_n \cup \Gamma_x$, we have that $\UST (G_n / (\Gamma_n \cup \Gamma_x))$ stochastically dominates $\UST(G_n / (\{\sunk\} \cup (\Gamma_n \cup \Gamma_x)) \cap E(G_n / (\Gamma_n \cup \Gamma_x))$. Therefore, we can couple the two $\UST$s together such that every edge $e$ not adjacent to the sun in $\UST(G_n / (\{\sunk\} \cup (\Gamma_n \cup \Gamma_x))$ also appears in $\UST(G_n / (\Gamma_n \cup \Gamma_x))$. When we expand $\{\sunk\} \cup\Gamma_n \cup \Gamma_x$ in $\UST(G_n / (\{\sunk\} \cup (\Gamma_n \cup \Gamma_x))$ and then remove $\sunk$ and its incident edges, we obtain several connected components, one of which contains $x$. By stochastic domination, the component containing $x$ is a subset of $\UST(G_n)$. Therefore, let $\Bsun(x, \dumr)$ denote the set of vertices connected to $x$ by a path of length at most $\dumr$ that does not intersect the sun after expanding $\{\sunk\} \cup\Gamma_n \cup \Gamma_x$ in the sunny graph. By stochastic domination, if we can prove a lower tail bound for $\Bsun(x, \dumr)$ on the sunny graph, it automatically transfers to a lower tail bound for $B_{\T_n}(x, \dumr)$ on the original graph.
	
	Recall that, given $\Gamma_n \cup \Gamma_x$, each of the $A_i$'s defined above is a subset of $\Gamma_n \cup \Gamma_x$. 
	When working on the graph $G_n/(\{\sunk\} \cup \Gamma_n \cup \Gamma_x)$, we let $I_i(\dumk)$ be the set of vertices connected to the contracted vertex in $\UST(G_n/(\{\sunk\} \cup \Gamma_n \cup \Gamma_x)$ by a path of length at most $\dumk$, such that the last edge on this path has an endpoint in $A_i$. Note that this is equivalent to being connected to $A_i$ by a path of length at most $\dumk$ not touching $\Gamma_n\cup \Gamma_x$ after expanding the path and separating $\sunk$ to obtain a subset of $\UST (G_n)$. We also include $A_i$ in $I_i(\dumk)$ and set $X_i = X_i(\dumk) = |I_i(\dumk)|$.
	
	Let $\Bj = \{ \sum_{i=1}^j X_i \leq 16\dumep \dumr^2\}$ and (for notational convenience) interpret $\Bz$ as an almost sure event. In \cref{sctn:conditional tail bound} we will prove the following lemma. 
	
	\begin{lemma}\label{lem:interval test tail Bj G'}
		Conditionally on $\Gamma_x \cup \Gamma_n$, let $\Bj$ be as defined above on the graph $G_n/(\{\sunk\} \cup \Gamma_n \cup \Gamma_x)$. Then for each $j\leq (2^{13}e)^{-1}\dumep^{-\frac13}$ 
		\[
		\pr \left( \Bj \middle\vert \Bjj,  (\Gamma_n \cup \Gamma_x), \Capkl (\Gamma_x^{5\dumr/6}  ,  \Gamma_n \cup \Gamma_x \setminus \Gamma_x^{5\dumr/6}) \geq \frac{  \dumr \dumk \dumep^{\frac{1}{6}}}{n} \right) \leq 1 - \frac{1}{160e}\dumep^{1/6}.
		\]
	\end{lemma}
	
	This has the following immediate corollary.
	
	\begin{corollary}\label{cor:volLBfromCap} 
		Let $\{ G_n \}$ be a sequence of graphs satisfying \cref{assn:main} and let $\T_n, \Gamma_n$ and $\Gamma_x$ be as in the previous theorem. Then for any $c>0$, any $\eps > 0$, all $n$ large enough and any $\ell \in \{0,\ldots, N_n\}$, we have
		\[
		\pr \left(  |B_{\T_n}(x,r_\ell)| \leq 16 r_\ell^2 \eps_\ell  \,\, ,  \,\, \Capkl (\Gamma_x^{5r_\ell/6} ,  (\Gamma_n \cup \Gamma_x) \setminus \Gamma_x^{5r_\ell/6}) \geq \frac{\eps_\ell^{\frac{1}{6}} k_\ell r_\ell  }{n}\right) \leq \exp \left\{-b {\dumep^{-1/6}}  \right\} \, ,
		\]
	where $b=(5e^2 2^{18})^{-1}$.
	\end{corollary}
	\begin{proof}
		Given that $\Capkl (\Gamma_x^{5r_\ell/6} ,  (\Gamma_n \cup \Gamma_x) \setminus \Gamma_x^{5r_\ell/6}) \geq \frac{\eps_\ell^{\frac{1}{6}} k_\ell r_\ell}{n}$, we can condition on $\Gamma_n \cup \Gamma_x$ and obtain intervals $(A_j)_{j=1}^{(2^{13}e)^{-1}\dumep^{-1/3}}$ on the graph $G_n/(\{\sunk\} \cup \Gamma_n \cup \Gamma_x)$ as described above. Applying \cref{lem:interval test tail Bj G'}, we then deduce that 
		\begin{align*}
			&\pr \left(  |\Bsun(x,r_\ell)| \leq 16 r_\ell^2 \eps_\ell  \middle\vert \Capkl (\Gamma_x^{5r_\ell/6} ,  (\Gamma_n \cup \Gamma_x) \setminus \Gamma_x^{5r_\ell/6}) \geq \frac{\eps_\ell^{\frac{1}{6}} k_\ell r_\ell  }{n}, \,\, \Gamma_n \cup \Gamma_x \right) \\
			&\leq \prod_{j=1}^{(2^{13}e)^{-1}\dumep^{-1/3}} \pr \left( \Bj \middle\vert \Bjj,  (\Gamma_n \cup \Gamma_x), \Capkl (\Gamma_x^{5\dumr/6}  ,  \Gamma_n \cup \Gamma_x \setminus \Gamma_x^{5\dumr/6}) \geq \frac{  \dumr \dumk \dumep^{\frac{1}{6}}}{n} \right) \\
			&\leq \left(1-\frac{1}{160e}\dumep^{1/6}\right)^{(2^{13}e)^{-1}\dumep^{-1/3}} \leq \exp \left\{-b {\dumep^{-1/6}}  \right\}.
		\end{align*}
		To conclude, we average over $\Gamma_n \cup \Gamma_x$, then transfer this result from $\Bsun(x,r_\ell)$ in $\UST(G_n/(\{\sunk\} \cup \Gamma_n \cup \Gamma_x)$ to $B_{\T_n}(x,r_\ell)$ in $\UST(G_n)$ using the stochastic domination result of \cref{lem:stoch dom add sun}, as explained above.
	\end{proof}

We now have all the tools to prove \cref{thm:lowermassboundUSTs}.

\begin{proof}[Proof of \cref{thm:lowermassboundUSTs}] Let $\delta >0$. We define the events 
\begin{align*}
		A_{\ell} 
		&= \left\{ \exists x \in \T_n: \left|B\left(x, r_\ell\right)\right| \leq {\epsilon_\ell r_\ell^2} \text{ and } \left|B\left(x, r_{\ell+1}\right)\right| \geq \epsilon_{\ell+1} r_{\ell+1}^2 \right\},  \\
		B_{\ell} 
		&= \left\{ \exists x \in \T_n: \left|B\left(x, r_\ell\right)\right| \leq \eps_\ell r_\ell^2 \right\} \, ,
		\end{align*}
for $\ell \in \{0,\ldots, N_n\}$. We decompose by writing
		\begin{align}\label{eqn:bootstrap 1}
		\pr\left( \exists x \in \T_n: |B(x, c \sqrt{n})| \leq \epsilon c^2 n\right) &\leq \pr\left( \neg \Enc \right) + \left( \sum_{\ell=0}^{N_n-1} \pr\left(\Enc \cap A_{\ell} \right) \right) + \pr\left(\Enc \cap B_{N_n} \right).
		\end{align}
In what follows we will show that given $\delta>0$ we can find $\eps$ and $c$ small enough and $N$ large enough so that the sum above is at most $3\delta$. This yields the required assertion of the theorem since the quantity $\pr(\exists x \in \T_n: |B(x, c \sqrt{n})| \leq \epsilon n)$ is non-decreasing as $c$ decreases.

We first apply \cref{thm:gammaisnice} and find $\eps$ and $c$ small enough and $N$ large enough (depending on $\delta$) that the first term is at most $\delta$ for all $n \geq N$. To control the second term in \eqref{eqn:bootstrap 1} we note that if $A_{\ell}$ occurs, then  $\left|B\left(v, r_{\ell+1}\right)\right| \leq \eps_{\ell} r_{\ell}^2=16\eps_{\ell+1} r_{\ell+1}^2$ for all $v \in B\left(x, r_{\ell+1}\right)$, and the number of such $v$ is at least $\eps_{\ell+1}r_{\ell+1}^2$. Therefore using \cref{thm:capacityofprefix}, \cref{cor:volLBfromCap} and Markov's inequality, we have for all $n$ large enough that
		\begin{align}
		\begin{split}\label{eqn:bootstrap 3}
		\sum_{l=0}^{N_n-1} \pr\left(\Enc \cap A_{\ell} \right) &\leq \sum_{l=0}^{N_n-1} \pr\left(\Enc \ \mathrm{and} \  \big | \left\{v \in \T_n: \left|B\left(v, r_{\ell+1}\right)\right| \leq 16\eps_{\ell+1} r_{\ell+1}^2 \right\} \big | \geq \eps_{\ell+1}r_{\ell+1}^2  \right) \\
		&\leq n \sum_{l=0}^{N_n-1} \eps_{\ell+1}^{-1}r_{\ell+1}^{-2}  \big ( e^{-a (\log \eps_\ell^{-1})^2} +  e^{-b\eps_\ell^{-\frac{1}{6}}} \big ) \, ,
		\end{split}
		\end{align}
By making $\eps$ smaller and $N$ larger if necessary we can guarantee that the term in the parenthesis on the right hand side is at most $\eps_\ell^{10}$ for all $n \geq N$. This shows that the sum can be smaller than $\delta$ as long as $\eps$ is small enough and $N$ is large enough. 

Finally, for the third term we recall that $r_{N_n} = c n^{\frac{1}{2} - \frac{\alpha}{10}}$ and $\eps_{N_n} = \eps n^{-\frac{\alpha}{5}}$, and use \cref{thm:capacityofprefix}, \cref{cor:volLBfromCap} and the union bound to bound
$$ \pr\left(\Enc \cap B_{N_n} \right) \leq n\left( e^{-a \log^2 (\eps^{-1}n^{\alpha/5})} + e^{-b\eps^{-\frac16}n^{\alpha/30} } \right) \, ,$$ 
which tends to $0$ as $n\to \infty$, so it is smaller than $\delta$ as long as $n$ is large enough. Provided $n$ is sufficiently large, we have therefore bounded \eqref{eqn:bootstrap 1} by $3\delta$, concluding the proof. (We can then reduce $\epsilon$ if necessary so that the bound holds for all $n\geq 1$).
\end{proof}

\section{Proofs of Theorems \ref{thm:gammaisnice} and \ref{thm:capacityofprefix}}\label{sctn:cap bounds}
In this section we prove \cref{thm:gammaisnice} and \cref{thm:capacityofprefix}. Due to the results of \cite{MNS}, this essentially boils down to proving only capacity estimates. In both cases, we will bound capacity using \cref{lem: var def of k-cap}. In \cref{sbsctn:general cap claims} we prove two claims that we later use in the proofs of \cref{thm:gammaisnice} and \cref{thm:capacityofprefix} in \cref{sbsctn:cap of Gamman} and \cref{sbsctn:cap of prefix} respectively.

\subsection{Two claims}\label{sbsctn:general cap claims}

In what follows we take $z = 1/20$ and assume that $\{G_n\}$ is a sequence of graphs satisfying \cref{assn:main}. Our first claim shows that with very high probability any loop-erased trajectory (that has bounded bubble-sum) has a rather long subinterval which is derived from a (relatively) short segment of a random walk trajectory (which in turn will have a long subinterval with good $M^{(k)}$ and closeness values by the subsequent claim).

\begin{claim}\label{claim: LE prefix to SRW interval2}
	Fix $\secp >0$ and $c>0$. There exists $\eps' >0$ such that for every $\eps \in(0,\eps')$ there exists $N$ such that for all $n\geq N$ and for all scales $\ell\in\{0,\ldots,N_n\}$ the following holds.
	Let $X$ be a random walk on $G_n$ which is bubble-terminated (see \cref{def:bubbleterm}) with bubble-sum bounded by $\secp$ and let $\Gamma$ be its loop erasure.
	Also fix $j\in\NN$ and $\K = \min\{\frac{z}{3\secp}, \frac{1}{24}\}$. Then with probability at least $1-\exp\left(-\frac{\dumep^{-\frac{z}{3}}}{\log(1/\dumep)}\right)$ either 
	$$ |\Gamma| < \frac{j\dumr}{24} \, ,$$
	or there exists $t \in[(j-1)\dumr/24,j\dumr/24]$ such that for all integers $1 \leq m \leq {\K}\dumep^{-\frac{2z}{3}}\log \dumep^{-1}$,
	\[
	\lambda_{t+m \dumep^{\frac{5z}{3}} \dumr}(X) - \lambda_{t+(m - 1) \dumep^{\frac{5z}{3}} \dumr}(X) \leq \dumep^{z}\dumr \, .
	\]
\end{claim}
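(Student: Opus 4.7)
The plan is to condition on a realization $\LE(X)=\gamma$ with $|\gamma| \geq j\dumr/24$ (the complementary case is covered vacuously by the first alternative in the conclusion), and to leverage the independence structure from \cref{cl:indep:loops}. Under this conditioning, the gap times $\tau_i := \lambda_{i+1}(X) - \lambda_i(X)$ are independent with conditional mean at most $\secp$. For each integer $t \geq 0$ and $m \geq 1$, the chunk $Z_t^{(m)} := \lambda_{t+m\dumep^{5z/3}\dumr}(X) - \lambda_{t+(m-1)\dumep^{5z/3}\dumr}(X)$ is a sum of exactly $\dumep^{5z/3}\dumr$ independent $\tau_i$'s, so $\E[Z_t^{(m)}\mid\gamma]\leq\secp\dumep^{5z/3}\dumr$. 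Calling the chunk \emph{slow} if $Z_t^{(m)} > \dumep^z\dumr$, Markov's inequality yields that the chunk is slow with conditional probability at most $\secp\dumep^{2z/3}$.

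Next, I would partition the index range $[(j-1)\dumr/24,\,j\dumr/24]$ into $M = \lfloor (24\K\dumep^z\log\dumep^{-1})^{-1} \rfloor$ consecutive disjoint blocks, each of length $\K\dumep^z\dumr\log\dumep^{-1}$, and therefore composed of precisely $\K\dumep^{-2z/3}\log\dumep^{-1}$ consecutive chunks of size $\dumep^{5z/3}\dumr$. The key observation is that if any single block is \emph{good} in the sense that all of its chunks are fast, then taking $t$ to be the starting index of that block immediately verifies the desired conclusion, so it suffices to bound the probability that no block is good.

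Chunks within a single block, as well as chunks from different blocks, depend on disjoint sub-sequences of the $\tau_i$'s, hence they are jointly conditionally independent given $\gamma$. Consequently each block is good with probability at least
\[
\bigl(1 - \secp\dumep^{2z/3}\bigr)^{\K\dumep^{-2z/3}\log\dumep^{-1}} \geq \dumep^{2\secp\K} \geq \dumep^{2z/3},
\]
where the last inequality uses $\K \leq z/(3\secp)$. Independence across blocks then gives
\[
\pr\!\left(\text{no block is good}\,\middle|\,\LE(X)=\gamma\right) \leq (1 - \dumep^{2z/3})^M \leq \exp\!\left(-M\dumep^{2z/3}\right) \leq \exp\!\left(-\frac{\dumep^{-z/3}}{\log \dumep^{-1}}\right)
\]
for $\dumep$ small enough, where the last step uses $\K \leq 1/24$. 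Since this bound is uniform in $\gamma$, averaging removes the conditioning and yields the claim.

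The only anticipated obstacle is the book-keeping of the two opposing constraints on $\K$: the bound $\K \leq z/(3\secp)$ keeps the per-block good probability from being too small, while the bound $\K \leq 1/24$ ensures the independence-based upper bound over the $M \asymp \dumep^{-z}/\log\dumep^{-1}$ blocks matches the target decay rate $\exp(-\dumep^{-z/3}/\log\dumep^{-1})$. The choice $\K = \min\{z/(3\secp),\,1/24\}$ is precisely the one that simultaneously satisfies both constraints with sufficient slack to absorb the $o(1)$ losses from rounding $M$ and the Taylor expansion of $\log(1-\dumep^{2z/3})$.
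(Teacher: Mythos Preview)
Your proof is correct and follows essentially the same approach as the paper: partition the index window $[(j-1)\dumr/24,j\dumr/24]$ into chunks of length $\dumep^{5z/3}\dumr$, use \cref{cl:indep:loops} and Markov's inequality to bound the probability that a chunk is slow by $\secp\dumep^{2z/3}$, group the chunks into blocks (the paper calls them ``runs'') of $\K\dumep^{-2z/3}\log\dumep^{-1}$ chunks each, and then use conditional independence to bound the probability that no block is entirely fast. Your bookkeeping of the two roles of $\K$ matches the paper's computation exactly.
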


\begin{proof}

	We set $\Mm=\dumep^{-5z/3}$. On the event that $|\Gamma| \geq j\dumr/24$, we divide $\Gamma[(j-1)\dumr/24,j\dumr/24]$ into $\Mm/24$ consecutive disjoint subintervals of length $\dumr/\Mm$. For $m \in \{1,\ldots, \Mm/24\}$ we say that the $m$-th interval is good if
\begin{equation*}
	\lambda_{\frac{(m+1)\dumr}{\Mm}(X)} - \lambda_{\frac{m\dumr}{\Mm}}(X) \leq \frac{\dumr}{\Mm^{3/5}} = \dumep^{z}\dumr \, .
\end{equation*}

As we assumed that the bubble sum is bounded by $\secp$, it follows from \cref{cl:indep:loops} that conditioned on $\Gamma$ and the event $\{|\Gamma| \geq j\dumr/24\}$, the collection of events that the $m$-th interval is good are independent. Furthermore, by \cref{cl:indep:loops}, \cref{claim: cap to bub UB} and Markov's inequality the probability of each such event is at least
\begin{equation*}
	1-\frac{\secp}{\Mm^{2/5}}.
\end{equation*} Hence, the probability that a sequence of ${\K}\dumep^{-\frac{2z}{3}} \log \dumep^{-1}$ disjoint consecutive intervals are all good is at least 
$$ (1- \secp \Mm^{-2/5})^{{\K}\dumep^{-\frac{2z}{3}} \log \dumep^{-1}} \geq \dumep^{2\K \secp} \, ,$$
where we used the inequality $1-x \geq e^{-2x}$ valid for $x>0$ small enough. Since there are $\frac{\Mm}{24}$ intervals in total, we can form $\frac{\Mm}{24}({\K}\dumep^{-\frac{2z}{3}} \log \dumep^{-1})^{-1}$ disjoint runs of ${\K}\dumep^{-\frac{2z}{3}} \log \dumep^{-1}$ consecutive intervals. Since the events are independent conditionally on $\Gamma$, we deduce that the probability that none of these runs contain only good intervals is at most
$$ \left(1-\dumep^{2 \K\secp}\right)
^{\frac{\Mm}{24} \big({\K}\dumep^{-\frac{2z}{3}}\log \dumep^{-1} \big)^{-1}}
\leq \exp\left(-\frac{\dumep^{2 \K\secp -5z/3}}{24\K \dumep^{-2z/3}\log\dumep^{-1}}\right) \leq
\exp\left(-\frac{\dumep^{2 \K\secp -z}}{24\K \log\dumep^{-1}}\right) \leq
\exp\left(-\dumep^{-{z \over 3}} \over \log \dumep^{-1}\right),
$$
where in the last inequality we used the fact that $ \K = \min\left\{{\frac{z}{3 \secp}},\frac{1}{24}\right\}$ by assumption. \end{proof}

For the next claim recall the definitions of $M^{(k)}$ in \eqref{def:Mk} and of $\close_k(U,V)$ in \eqref{def:close}. We show that with very high probability, any random walk interval of length of order $\dumep^z \log \dumep^{-1}\dumr$ has a slightly shorter subinterval of length of order $\dumep^z \dumr$, such that its value of $M^{(k)}$ and its closeness to the rest of the path are very close to their expected values given by \cref{lem:mk} and \eqref{eqn:expected closeness}. This is done by finding many well separated intervals and employing the fast mixing of the graph to obtain independence. 

		\begin{claim}\label{claim:X subinterval good M new} 
		Fix some $\K,c>0$. There exists $\eps' >0$ such that for every $\eps \in (0,\eps')$ there exists $N$ such that for all $n\geq N$ and for all scales $\ell\in\{0,\ldots,N_n\}$ the following holds.
			Let $X$ be a random walk on $G_n$, started from stationarity. 
	Let $M>0$ and fix some interval $I \subset [0,M\sqrt{n}]$ with $|I|= \frac{1}{2} \K \dumep^{z} \log \dumep^{-1}\dumr$. Also let $W \subset G_n$ be fixed. Then with probability at least $1-2e^{-\frac{\K z}{6}(\log \dumep^{-1})^{2}}$ there exists a subinterval $J = [t_J^-, t_J^+] \subset I$ such that  
		\begin{enumerate}[(1)]
			\item $|J| = 2 \dumep^{z}\dumr$,
			\item $M^{(\dumk)}(X[J]) \leq \frac{\dumr}{4}$
			\item $\close_{\dumk} \left(X[J], X\left[t_J^+ + \frac{\dumr}{24}, M\sqrt{n}\right] \cup X[0, t_J^- - \frac{\dumr}{24}]\right) \leq \frac{ \dumr \dumk^2 M}{n^{1.5}}$.
			\item  $\close_{\dumk} (X[J], W) \leq \frac{ \dumr \dumk^2 |W|}{n^{2}}$. 
		\end{enumerate}
	\end{claim}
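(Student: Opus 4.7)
The idea is to partition $I$ into $N \asymp \log \dumep^{-1}$ disjoint candidate subintervals $J_1,\ldots,J_N$ of length $2\dumep^z\dumr$, separated by buffers of size of order $\log(\dumep^{-1})\tmix(G_n)$, and to show that each $J_i$ violates at least one of (2)--(4) with probability at most $C\dumep^z$ for some $C=C(\theta)>0$. The buffers will let us couple the segments $X[J_1],\ldots,X[J_N]$ to an independent family of random walk trajectories started from stationarity, so that the probability that \emph{every} $J_i$ is bad is at most $(C\dumep^z)^N$ plus a small coupling error; this matches the target bound once $N\ge \K\log\dumep^{-1}/4$.

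\textbf{Single-interval bounds.} Fix $J_i$. For (2), \cref{lem:mk} combined with \cref{assn:main}(1) gives $\E[M^{(\dumk)}(X[J_i])]\le 2|J_i|\theta = 4\dumep^z\dumr\theta$, so Markov's inequality yields failure probability at most $16\dumep^z\theta$. For (3), we condition on $X[J_i]$ and treat it as a fixed set. Since $\dumr/24 \gg \tmix(G_n)$ under the scale constraints (using $\dumr\ge r_{N_n}=cn^{1/2-\alpha/10}$ and $\tmix(G_n)=o(n^{1/2-\alpha})$), iterating \eqref{eq:tvDecay} shows that the walk restarted at time $t_{J}^++\dumr/24$ begins at a distribution within $\exp(-\Omega(\log\dumep^{-1}))$ of uniform; the same holds in reverse for the walk before $t_{J}^--\dumr/24$, by reversibility of the simple random walk on the regular graph $G_n$. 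Applying \eqref{eqn:expected closeness} separately to the forward and reverse pieces with fixed set $U=X[J_i]$ of size $\le 2\dumep^z \dumr$ and total length at most $M\sqn$ gives $\E\bigl[\close_{\dumk}(X[J_i],\text{far parts})\,\big|\, X[J_i]\bigr]\lesssim \dumk^2\dumep^z\dumr M/n^{3/2}$, after which Markov yields failure probability $O(\dumep^z)$. For (4), we invoke \eqref{eqn:expected closeness} with $U=W$ fixed and the random walk being $X$ on $J_i$, which, thanks to a pre-$J_i$ buffer of the same length, begins at an approximately stationary distribution; this gives $\E[\close_{\dumk}(X[J_i],W)] \lesssim \dumk^2\dumep^z\dumr|W|/n^2$, and Markov yields failure probability $O(\dumep^z)$. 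Summing, each $J_i$ is bad with probability at most $C\dumep^z$ for some $C=C(\theta)>0$.

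\textbf{Combining via mixing.} To decouple across $i$ we insert gaps of length $C'\log(\dumep^{-1})\tmix(G_n)$ between consecutive $J_i$'s. This fits inside $I$: since $\dumep \ge \eps n^{-\alpha/5}$ and hence $\dumep^z\dumr\ge \eps^z n^{1/2-\alpha/10-\alpha z/5}$, while $\log(\dumep^{-1})\tmix(G_n) = o(n^{1/2-\alpha})$, for $\eps$ small and $n$ large we can still fit $N=\lfloor\K\log\dumep^{-1}/4\rfloor$ such subintervals. By iterating \eqref{eq:tvDecay}, each gap couples the walk at the start of the next subinterval to stationarity with total-variation error $\exp(-\Omega(\log\dumep^{-1}))$, so a union bound lets us couple $(X[J_i])_{i=1}^N$ to $N$ independent $2\dumep^z\dumr$-step walks started from stationarity with total coupling error at most $\exp\bigl(-\K z(\log\dumep^{-1})^2/6\bigr)$ once $C'$ is taken large enough. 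Under the independent coupling the single-interval bounds multiply to $(C\dumep^z)^N \le \exp\bigl(-\K z(\log\dumep^{-1})^2(1+o(1))/4\bigr)$, so combining both contributions yields the stated $2\exp\bigl(-\K z(\log\dumep^{-1})^2/6\bigr)$ bound.

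\textbf{Main obstacle.} The delicate point is that conditioning on $X[J_i]$ destroys the Markov property for the full trajectory, so one cannot feed it directly into \eqref{eqn:expected closeness} (which is stated for walks from stationarity). The workaround above splits the walk into its pre-buffer and post-buffer segments and uses reversibility together with \eqref{eq:tvDecay} to replace each segment by an approximately stationary-started walk, with a total-variation error that is absorbed into the final exponential. Tuning the buffer constant $C'$, the bound on $N$, and the constant $C=C(\theta)$ so that the accumulated coupling error and the $(C\dumep^z)^N$ term both fit under the target $\K z/6$ factor in the exponent is the only numerically delicate piece of bookkeeping.
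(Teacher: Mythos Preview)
Your overall strategy---dividing $I$ into $\Theta(\log\dumep^{-1})$ subintervals separated by mixing buffers, bounding each single-interval failure probability by $O(\dumep^z)$ via Markov's inequality, and then coupling the subintervals to independent stationary walks---is exactly the paper's approach. The difference lies in how you handle the conditioning for item (3), and this creates a genuine gap in your independence step.

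You condition on $X[J_i]$ and treat the far portions of the walk as the random object, using reversibility to argue they are approximately stationary-started. This correctly yields the \emph{unconditional} bound $\pr(J_i\text{ bad})\le C\dumep^z$, but it does not give what you need to multiply across $i$. After coupling the $X[J_i]$ to independent $Y_i$, the events $\{\close_{\dumk}(Y_i,\text{far parts of }X)>\ldots\}$ still all share the same random ``far parts'' of $X$, so they are \emph{not} independent; your sentence ``under the independent coupling the single-interval bounds multiply'' is therefore unjustified. A toy version of the issue: if $Y_1,\ldots,Y_N$ are i.i.d.\ and $Z$ is an independent common variable, the events $\{f(Y_i,Z)>t\}$ need not be independent even though the $Y_i$ are.

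The paper repairs this by conditioning the other way round. It fixes the far portions of the walk (denoted $\Cavoidi$, essentially $X$ outside a small neighbourhood of $I$; this uses that $|I|\ll\dumr/24$ so the $\dumr/24$-buffer in (3) already reaches outside all of $I$) and treats the subinterval pieces $Y_j$ as the random objects. Since the $Y_j$ are fresh walks independent of $\Cavoidi$ and of each other, the events are genuinely conditionally independent given $\Cavoidi$, and \eqref{eqn:expected closeness} applied with $W=Y_j$ (stationary walk of length $2\dumep^z\dumr$) and fixed set $U=\Cavoidi$ of size at most $M\sqrt n$ gives a uniform conditional bound $\pr(\neg\EngoodBuf\mid\Cavoidi)\le (16+8\theta)\dumep^z$. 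No reversibility argument is needed. Your proof is easily salvaged by adopting this conditioning; note also that the paper uses buffers of length $\tfrac12\dumep^z\dumr$ rather than $C'\log(\dumep^{-1})\tmix$, though either choice works.
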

	\begin{proof}
	We write $I=[t_I^-,t_I^+]$ and then further subdivide $I$ into $\frac{\K}{5}\log \dumep^{-1}$ segments of length $2\dumep^{z}\dumr$ separated by buffers of length $\frac{1}{2}\dumep^{z}\dumr$, that is, we set
		\begin{equation}\label{eqn:Cij def}
			I_j = [t_j^-, t_j^+) :=  {\left[t_I^- + \frac{5}{2}j\dumep^{z}\dumr + \frac{1}{4}\dumep^{z}\dumr,t_I^- + \frac{5}{2}(j+1)\dumep^{z}\dumr - \frac{1}{4}\dumep^{z}\dumr\right)}
		\end{equation}
			for each non-negative integer $j \leq \frac{\K}{5}\log \dumep^{-1}$. It will be important soon that the length of the buffers satisfy $\frac{1}{4}\dumep^{z}\dumr \geq \frac{1}{4}\eps_{N_n}^{z}r_{N_n} \gg n^{\frac{2\alpha}{3}} \tmix$ for all $n$ large enough by \cref{assn:main}, \eqref{eq:parameters1} and \eqref{eq:parameters2}. We also set 
		\[
			\Cavoidi =  X\left[0, t_I^- - \frac{\dumr}{36}\right) \cup X{\left[ t_I^+ + \frac{\dumr}{36}, M\sqrt{n}\right)}.
		\]
	We condition on $\Cavoidi$ and define for each $j$ the event
	\[
		\Engood = \left\{ M^{(\dumk)}(X[I_j]) \leq \dumr/4 \pand \close_{\dumk}\left(X[I_j], \Cavoidi \right) \leq \frac{\dumr \dumk^2 M}{n^{3/2}} \pand \close_{\dumk}\left(X[I_j], W \right) \leq \frac{\dumr \dumk^2|W|}{n^2}\right\}.
	\]

	Note that since $|I| = \frac{1}{2}\K \dumep^{z} \log \dumep^{-1}\dumr$, we have that $t_j^- - t_I^- \leq \frac{\dumr}{72}$ and $t_I^+ - t_j^+ \leq \frac{\dumr}{72}$ for all $j$ and all $\ell$ provided $\eps$ is small enough. Thus,
\begin{equation*}
	\close_{\dumk}(X[I_j],\Cavoidi) \geq \close_{\dumk}\left(X[I_j], X\left[0, t_j^- - \frac{\dumr}{24}\right] \cup X\left[t_j^+ + \frac{\dumr}{24}, M\sqrt{n}\right]  \right).
\end{equation*}
Hence $\Engood$ implies that the interval $I_j$ satisfies the conditions $(1)-(4)$. Note that the events $\{\Engood\}_j$ are not independent, but that was why we introduced the buffers. Let $\{\Ijbuf\}_{j \leq \frac{\K}{5}\log \dumep^{-1}}$ be independent random walks started from stationarity and run for time $2\dumep^{z}\dumr$, set
\[
		\EngoodBuf = \left\{ M^{(\dumk)}(\Ijbuf) \leq \dumr/4 \pand \close_{\dumk}\left(\Ijbuf, \Cavoidi \right) \leq \frac{\dumr \dumk^2 M}{n^{3/2}} \pand \close_{\dumk}\left(\Ijbuf, W \right) \leq \frac{\dumr \dumk^2|W|}{n^2}\right\} \, ,
	\] 
	and note that conditioned on $\Cavoidi$ the events $\{\EngoodBuf\}_j$ are independent. Now by \cref{lem:mk} and \cref{assn:main} we have (provided $\eps <1$ and $c< 1/6$, for example) that
	\begin{equation}\label{eq:expected_M} \nonumber
		\E\left[ M^{(\dumk)}(Y_j)\right] \leq 2\theta \dumep^{z}\dumr \, .
	\end{equation}
	Since $|\Cavoidi| \leq M\sqrt{n}$, it also follows from \eqref{eqn:expected closeness} that
	\begin{equation}\label{eq:closeness} \nonumber
		\E\left[\close_{\dumk} \left(\Ijbuf, \Cavoidi \right) \middle|\Cavoidi \right]\leq \frac{8\dumep^{z}\dumr \dumk^2 M}{n^{3/2}} \, ,
	\end{equation}
and that
\begin{equation}\label{eq:closeness:secondrw} \nonumber
	\E\left[\close_{\dumk} \left(\Ijbuf, W\right) \right]\leq \frac{8\dumep^{z}\dumr \dumk^2|W|}{n^{2}} \, .
	\end{equation}
Consequently, by Markov's inequality and independence we get that
\begin{align}\label{eqn:Eij occurs calc}
		\pr \left( \text{none of } \{\EngoodBuf\} \text{ occur}  \middle| \Cavoidi \right) \leq \left((16+8\theta)\dumep^{{z}}\right)^{\frac{\K}{5}(\log \dumep^{-1})} \leq e^{-\frac{\K z}{6}(\log \dumep^{-1})^{2}} \, ,
\end{align} 
as long as $\eps$ is small enough depending on $\theta$. To conclude, note that as long as $n$ is large enough, we can couple the independent walks $\{Y_j\}$ and $\{X[I_j]\}$ so that 
	\begin{align}\label{eqn:segment coupling}
		\pr\left(\exists j: X[I_j] \neq \Ijbuf \right) \leq  \frac{\K \log(1/\dumep)}{5}2^{-n^{\frac{2\alpha}{3}}} \leq e^{-\frac{\K z}{6}(\log \dumep^{-1})^{2}} \, .
	\end{align} 
Indeed, assume we coupled the first $j-1$ pairs and condition on all these pairs. By the Markov property and since the buffers between distinct $I_j$'s are longer than $n^{\frac{2\alpha}{3}}\tmix$, the starting point of $X[I_j]$ is 
$2^{-n^{\frac{2\alpha}{3}}}$ close in total variation distance to the stationary distribution by \eqref{eq:tvDecay}. Therefore, we may couple it to the first vertex of $\Ijbuf$ so that they are equal with probability at least $1-2^{-n^{\frac{2\alpha}{3}}}$ (see for instance \cite[Proposition 4.7]{levin2017markovmixing}). Moreover, once their starting points are coupled, we can run the walks together so that they remain coupled for the remaining $2\dumep^{z}\dumr$ steps. Hence \eqref{eqn:segment coupling} holds for large enough $n$ and we combine with \eqref{eqn:Eij occurs calc} in a union bound to conclude that 
\[ \pushQED{\qed} \pr \left( \text{none of } \{\Engood\} \text{ occur}  \middle| \Cavoidi \right) \leq 2 e^{-\frac{\K z}{6}(\log \dumep^{-1})^{2}} \, . 
\]
\end{proof}

\subsection{Proof of \cref{thm:gammaisnice}}\label{sbsctn:cap of Gamman}

	As mentioned in \cref{sctn:stoch dom lemmas}, to sample $\Gamma_n$ we will use a coupling with the sunny graph $G_n^* = G_n^*(\fp)$ introduced in \cite{PeresRevelleUSTCRT}, obtained from $G_n$ by adding an extra vertex $\rho_n$ known as the sun, and connecting it to every vertex in $v \in G_n$ with an edge of weight $\frac{ (\deg v) \fp}{\sqrt{n}-\fp}$ (so that the probability of jumping to $\rho_n$ at any step is $\fp n^{-1/2}$). 
	It follows from \cref{lem:stoch dom add sun} that the graph $\UST (G_n^*)\setminus\{\rho_n\}$ obtained from the UST of $G_n^*$ by removing $\rho_n$ and its incident edges is stochastically dominated by the UST of $G_n$.
	Therefore, there is a coupling between $\UST (G_n)$ and $\UST (G_n^*)$ such that $\UST (G_n^*)\setminus\{\rho_n\} \subset \UST (G_n)$; moreover if $\widetilde{\Gamma}_n^*$ denotes the path between $\ru$ and $\rv$ in $\UST (G_n^*)$, then $\Gamma_n = \widetilde{\Gamma}_n^*$ in this coupling provided that $\rho_n \notin \widetilde{\Gamma}_n^*$. 
	
	Note that this sunny graph is \textit{different} to the sunny graph used in the statements of \cref{lem:interval test tail Bj G'} and \cref{cor:volLBfromCap}. As outlined in \cref{sctn:bootstrap}, \cref{lem:interval test tail Bj G'} and \cref{cor:volLBfromCap} refer to later stages of the overall proof strategy.
	
	Consequently, it will be convenient to work with a path $\Gamma_n^* = \Gamma_n^*(\fp)$ sampled as follows.
	Let $T$ and $T'$ be two independent geometric random variables with mean $\fp^{-1} n^{1/2}$. Given $T$, let $X$ be a random walk run for $T-1$ steps started from $u \in G_n$ and let $\LE(X)$ be its loop erasure. Given $T'$ and $X$, run $X'$, a random walk started from $v \in G_n$ and terminate $X'$ after $T'$ steps. Write $T_X$ for the minimum between $T'$ and the first hitting time of $X$. Let $\Gamma_n^*$ be the path between $(u,v)$ in $\LE(X)\cup\LE(X'[0,T_X])$, if such a path exists. Otherwise, let $\Gamma_n^* = \emptyset$.

\begin{lemma}\label{prop: coupling with path}
	For every $\delta>0$ there exists $\fp>0$ such that for all large enough $n$ there exists a coupling of $\Gamma_n$ and $\Gamma_n^*(\fp)$ such that $\Gamma_n = \Gamma_n^*(\fp)$ and is non-empty with probability at least $1-\delta$. 
	\end{lemma}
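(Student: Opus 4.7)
The plan is to establish the coupling by running Wilson's algorithm on the sunny graph $G_n^*$ using the exact same randomness that appears in the construction of $\Gamma_n^*(\fp)$, and then combining with the stochastic domination $\UST(G_n^*) \setminus \{\rho_n\} \subseteq \UST(G_n)$ provided by \cite[Lemma 10.3]{LyonsPeres}. Concretely, on a common probability space I sample independent random walks $X$ from $u$ and $X'$ from $v$ on $G_n$, together with independent geometric variables $T, T'$ of mean $\fp^{-1}\sqrt{n}$; these are interpreted as the walks on $G_n^*$ whose jumps to $\rho_n$ occur at the geometric times. Wilson's algorithm rooted at $\rho_n$, processing $u$ first and $v$ second, then produces a sample of $\UST(G_n^*)$, and by the domination I can further couple this to a sample of $\UST(G_n)$. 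Let $\widetilde{\Gamma}_n^*$ denote the path between $u$ and $v$ in this $\UST(G_n^*)$. On the event $E = \{\rho_n \notin \widetilde{\Gamma}_n^*\}$, which coincides with the event that $X'$ hits $\LE(X[0, T-1])$ before being killed at time $T'$, the path $\widetilde{\Gamma}_n^*$ lies entirely in $G_n$ and in $\UST(G_n)$, so uniqueness of paths in a tree gives $\widetilde{\Gamma}_n^* = \Gamma_n$. Moreover, on $E$ the paper's two-walk construction produces the same path, so $\Gamma_n^*(\fp) = \Gamma_n$ on $E$ and both are non-empty.

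It then remains to show $\pr(E^c) \leq \delta$ when $\fp$ is small. Writing $H$ for the hitting time of $\LE(X[0, T-1])$ by $X'$, one has $E^c \subseteq \{T' \leq C\sqrt{n}\} \cup \{H > C\sqrt{n}\}$. Since $T'$ is geometric with mean $\fp^{-1}\sqrt{n}$, the bound $\pr(T' \leq C\sqrt{n}) \leq 1 - (1 - \fp n^{-1/2})^{C\sqrt{n}} \leq C\fp$ can be made smaller than $\delta/2$ by choosing $\fp$ small relative to $C$ and $\delta$. For the other piece I plan to consider the prefix $A := \LE(X[0, \sqrt{n}])$, which is contained in $\LE(X[0, T-1])$ on the event $T > \sqrt{n}$ (itself of probability arbitrarily close to $1$ for $\fp$ small). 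A length lower bound $|A| \geq c_1 \sqrt{n}$ with high probability follows from \cref{assn:main}(1) and a bubble/Chebyshev argument in the spirit of \cref{cl:indep:loops}, and \cref{lem:mk} together with the observation that $p_t(v,v) \leq 2/n$ for $t \geq \tmix = o(\sqrt{n})$ gives $\E[M^{(\sqrt{n})}(A)] \leq \E[M^{(\sqrt{n})}(X[0, \sqrt{n}])] = O(\sqrt{n})$. Substituting these into \cref{lem: var def of k-cap} then yields $\Capp_{\sqrt{n}}(A) \geq c_3 > 0$ with high probability, and \cref{cl:cap:mix} finally gives $H \leq C\sqrt{n}$ with probability at least $1 - \delta/2$ for an appropriate $C = C(\theta)$.

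The main obstacle I anticipate is the capacity lower bound $\Capp_{\sqrt{n}}(A) \geq c_3$ holding with high probability with constants independent of $\fp$: this is why I work with the truncated prefix $A = \LE(X[0, \sqrt{n}])$ rather than the loop erasure of the entire killed walk, whose bubble sum via \cref{claim:bubblezeta} deteriorates like $\fp^{-2}$ as $\fp \to 0$. The arguments needed are of a simpler flavour than those assembled in the proof of \cref{thm:gammaisnice}(I), since no conditioning on two random endpoints is involved, but they share the same skeleton of turning a walk-level bubble/variance estimate into a deterministic capacity lower bound for a typical loop-erased segment.
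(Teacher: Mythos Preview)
Your coupling construction is essentially identical to the paper's: both run Wilson's algorithm on $G_n^*(\fp)$ rooted at $\rho_n$, processing $u$ then $v$, observe that this realizes the two-walk construction of $\Gamma_n^*(\fp)$, and then invoke the stochastic domination $\UST(G_n^*)\setminus\{\rho_n\}\subseteq\UST(G_n)$ to conclude $\Gamma_n=\Gamma_n^*(\fp)$ on the event $E=\{\rho_n\notin\widetilde\Gamma_n^*\}$. The paper then simply cites \cite[Claim~2.9]{MNS} for the bound $\pr(E^c)\to 0$ as $\fp\to 0$, whereas you attempt to prove this bound from scratch.

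Your direct argument has a genuine gap. You assert that $A:=\LE(X[0,\sqrt n])$ is contained in $\LE(X[0,T-1])$ on the event $\{T>\sqrt n\}$, but loop erasure is not monotone in the time horizon: if $X[\sqrt n, T-1]$ revisits a vertex of $A$, a suffix of $A$ is erased in the longer loop erasure. (For instance, if $X_0=a$, $X_1=b$, $X_2=c$, $X_3=b$, then $\LE(X[0,2])=(a,b,c)$ while $\LE(X[0,3])=(a,b)$.) Since in Wilson's algorithm the walk $X'$ must hit the \emph{tree} $\LE(X[0,T-1])\cup\{\rho_n\}$ rather than the raw trajectory $X[0,T-1]$, a capacity lower bound for $A$ does not yield the hitting bound you need for $H$. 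The event that $X[\sqrt n,T-1]$ avoids $A$ does not have probability close to $1$ either (indeed $A$ has positive $\sqrt n$-capacity and the walk runs for order $\fp^{-1}\sqrt n$ steps), so one cannot simply add it to the good event.

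To repair this you must produce a set that is genuinely contained in $\LE(X[0,T-1])$. One option is to take a prefix of $\LE(X[0,T-1])$ itself of length $c_1\sqrt n$; this requires first showing $|\LE(X[0,T-1])|\geq c_1\sqrt n$ with high probability, which does bring in the $\fp$-dependent bubble bound of \cref{claim:bubblezeta}, but the circularity you feared is avoidable because once $\Capp_{\sqrt n}(\LE(X[0,T-1]))\geq c_3$ holds, \cref{cl:cap:mix} gives $\pr(H>C\sqrt n)$ decaying geometrically in $C$ with rate depending only on $c_3$, so one may first fix $c_3$ and $C$ and only then send $\fp\to 0$. This is essentially the content of the cited claim in \cite{MNS}.
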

	\begin{proof}
		For every path $\Gamma$ from $u$ to $v$ let $H(\Gamma) = \Gamma$ be equal to $\Gamma$ if $\rho_n\notin\Gamma$ and $H(\Gamma) = \emptyset$ if $\rho_n\in\Gamma$.
		Run Wilson's algorithm on the graph $G_n^*(\fp)$ initiated at the points $\rho_n, u$ and then $v$, and note that the hitting time of $\rho_n$ is a geometric random variable, and moreover that given $\tau_{\rho_n}$, the walk until time $\tau_{\rho_n}$ is distributed as a random walk on $G_n$. Consequently, $H(\widetilde{\Gamma}_n^*)$ has the distribution of $\Gamma_n^*$.
		
		By the discussion above, we can find a coupling of $(\Gamma_n, \widetilde{\Gamma}_n^*)$ where these paths are equal whenever $\rho_n \notin \widetilde{\Gamma}_n^*$. Under this coupling, we have that $(\Gamma_n, \widetilde{\Gamma}_n^*, H(\widetilde{\Gamma}_n^*))$ are all equal with probability $1-\pr(\rho_n \notin \widetilde{\Gamma}_n^*)$. As $H(\widetilde{\Gamma}_n^*)$ has the law of $\Gamma_n^*$, this is in fact a coupling of $\Gamma_n$ and $\Gamma_n^*$ where the paths are equal with probability $1-\pr(\rho_n \notin \widetilde{\Gamma}_n^*)$. By \cite[Claim 2.9]{MNS}, this probability tends to $1$ as $\fp\to 0$. For the final part of the claim, note that $\Gamma_n^*$ is clearly non-empty on this good event.
	\end{proof}

\begin{proof}[Proof of \cref{thm:gammaisnice}] Our main effort is to show that part (II) holds with high probability. Indeed, that part (I) and (III) occur with probability at least $1-\delta$ as long as $\eps>0$ and $c>0$ are small enough is a consequence of \cite[Theorem 1.1 and Theorem 2.1]{MNS}.

Let $\delta>0$. We appeal to \cref{prop: coupling with path} and obtain $\zeta>0$ so that $\pr\left(\Gamma_n \neq \Gamma_n^*(\fp)\right)\leq \delta/4$. Denote by $\mathcal{B}$ the event of part (II) of \cref{thm:gammaisnice}. For the rest of the proof, we think of $\delta$ and $\zeta$ as fixed, we set $\secp = \theta + 2\zeta^{-2}$ and $\K = \min\{\frac{z}{3\secp}, \frac{1}{24}\}$, and decrease both $c$ and $\eps$ until we eventually obtain that $\pr(\mathcal{B}^c)\leq \delta$. Recall that $\Gamma_n^*(\fp)$ is generated using two independent random walks with geometric killing time which we denote $X$ and $X'$. Setting $M = \frac{8}{\fp\delta}$, we can thus write
		\begin{equation*}
			\pr(\mathcal{B}^c) \leq \pr\left(\Gamma_n \neq \Gamma_n^*(\fp)\right) + \pr\left(|X|+|X'| \geq M\sqn \right) + \pr\left(|X|+|X'| \leq M\sqrt{n} \pand \mathcal{B}^c\right).
		\end{equation*}
		The first event has probability at most $\frac{\delta}{4}$ by the above.
		Since $M = \frac{8}{\fp\delta}$, the probability of the second event is also bounded by $\delta/4$ by Markov's inequality. For the third term, first decrease $c$ if necessary so it is less than $\frac{1}{2M}$ (this will be useful at the end of the proof), then let $\ell\leq N_n$ be a fixed scale and let $I\subset \Gamma_n^*(\fp)$ be some segment with $|I| = \dumr/3$. It therefore has at least $\dumr / 6$ vertices either on $\LE(X)$ or on $\LE(X')$ and hence contains at least one interval of the form $\LE(X)[(j-2)\dumr/24, (j+1)\dumr/24]$ or $\LE(X')[(j-2)\dumr/24, (j+1)\dumr/24]$ (that is, an interval of the form $[(j-1)\dumr/24, j\dumr/24]$ plus two buffers of length $\frac{\dumr}{24}$ both before and after the interval) for some $j \leq {24 M \sqrt{n} \over \dumr}$.

		Since \cref{assn:main} holds, we deduce from \cref{claim:bubblezeta} that $X$ and $X'$ are bubble-terminated random walks with bubble sum bounded by $\secp$. Hence we may apply \cref{claim: LE prefix to SRW interval2} and the union bound to learn that the probability that there exists a scale $\ell$ and $j$ as above such that the event of \cref{claim: LE prefix to SRW interval2} does not hold for $X$ or $X'$ is at most
		\begin{equation*}
	\sum_{\ell=0}^{\infty}\frac{M\sqrt{n}}{\dumr/24}\exp\left(-\frac{\dumep^{-\frac{z}{3}}}{\log(1/\dumep)}\right) 
	= \frac{24M}{c} \sum_{\ell=0}^{\infty} 2^\ell \cdot \exp\left(-\frac{\dumep^{-\frac{z}{3}}}{\log(1/\dumep)}\right) 
	= \frac{24M}{c}\sum_{\ell=0}^{\infty} 2^\ell \cdot \exp\left(-\frac{\eps^{-\frac{z}{3}} 4^{\frac{\ell z}{3}}}{\log(4^\ell \eps^{-1})}\right) \, ,
		\end{equation*}
		which can be made to be smaller than $\delta/4$ by decreasing $\eps$ appropriately. Thus we may assume without loss of generality that $I$ contains an interval of the form $\LE(X)[(j-2)\dumr/24, (j+1)\dumr/24]$ for some $j$ that we fix henceforth, and that there exists a time $t\in[(j-1)\dumr/24, j\dumr/24]$ such that for all integers $m$ satisfying $1 \leq m \leq \K \dumep^{-\frac{2z}{3}}(\log \dumep^{-1})$ we have
		\begin{equation}\label{eqn:special t def 1}
		\lambda_{t+m \dumep^{\frac{5z}{3}} \dumr}(X) - \lambda_{t+(m - 1) \dumep^{\frac{5z}{3}} \dumr}(X) \leq \dumep^{z}\dumr.
		\end{equation}

		We write $X[t_1,t_2)$ for the corresponding part of $X$, that is, we set $t_1 = \lambda_{t}(X)$ and $t_2 = \lambda_{t+\K\dumep^{z}\log(\dumep^{-1}) \dumr}(X)$. It holds by construction that
		\begin{equation}\label{eqn:t1-t2 bounds}
			t_2 - t_1 \geq \K \dumep^{z} \log \dumep^{-1} \dumr \quad \textrm{and} \quad t_2 \leq M\sqrt{n} \, . 
		\end{equation}

		We now apply the union bound using \cref{claim:X subinterval good M new} with $W=X'[0, M\sqn]$ to get that the probability that there exists a scale $\ell$ and $i\leq \frac{2M\sqrt{n}}{\K \dumep^z\log\dumep^{-1}\dumr}$ such that $(1)-(4)$ of \cref{claim:X subinterval good M new} do \emph{not} hold for the interval $\left[\left(i-1\right) \frac{1}{2}\left(\K \dumep^z\log\dumep^{-1}\dumr\right), i \frac{1}{2}\left(\K \dumep^z\log\dumep^{-1}\dumr\right)\right]$ is at most 
		\begin{equation*}
		\sum_{\ell=0}^{\infty} \frac{2M\sqn}{ \K \dumep^z\log\dumep^{-1}\dumr} \cdot \exp\left(-  \frac{\K z}{6} \left(\log \dumep^{-1}\right)^{2}\right) 
		\leq \sum_{\ell=0}^{\infty} \frac{2M\left(2\cdot4^z\right)^{\ell}} {\K \eps^z\log\eps^{-1}c} 
		\left(\frac{\eps}{4^\ell}\right)^{\log\left(\frac{4^\ell}{\eps}\right) \frac{\K z}{6}},
		\end{equation*}
		which can be made smaller than $\delta/4$ by decreasing $\eps$ appropriately. Therefore we henceforth assume that all such intervals contain a good subinterval satisfying $(1)-(4)$ of \cref{claim:X subinterval good M new}.

		Since $[t_1, t_2]$ must contain an interval of the form $\left[\left(i-1\right) \frac{1}{2}\left(\K \dumep^z\log\dumep^{-1}\dumr\right), i \frac{1}{2}\left(\K \dumep^z\log\dumep^{-1}\dumr\right)\right]$ by \eqref{eqn:t1-t2 bounds}, it now follows that $[t_1,t_2]$ contains a subinterval $J=[t_J^-,t_J^+]$ satisfying conditions $(1)-(4)$ of \cref{claim:X subinterval good M new} with $W=X'[0, M\sqn]$. Since $$|J| = 2\dumep^{z}\dumr \geq \lambda_{t+m \dumep^{\frac{5z}{3}} \dumr}(X) - \lambda_{t+(m - 2) \dumep^{\frac{5z}{3}} \dumr}(X)$$ for each $2 \leq m\leq \K \dumep^{-\frac{2z}{3}}(\log \dumep^{-1})$ by \eqref{eqn:special t def 1}, there must exist some $m^* \leq \K \dumep^{-\frac{2z}{3}}(\log \dumep^{-1})$ such that
		\[
		t_J^- \leq \lambda_{t+(m^* - 1) \dumep^{\frac{5z}{3}} \dumr}(X) < \lambda_{t+m^* \dumep^{\frac{5z}{3}} \dumr}(X) \leq t_J^+.
		\]
		We set $A=(\LE(X))_{[t+(m^* - 1) \dumep^{\frac{5z}{3}} \dumr, t+m^* \dumep^{\frac{5z}{3}} \dumr)}$, so that $A \subset I \subset \Gamma_n$, so that $|A|=\dumep^{\frac{5z}{3}} \dumr$ and so that $A \subset X[t_J^-, t_J^+]$. Since $M^{(\dumk)}(A)$ defined in \eqref{def:Mk} is monotone in $A$ we have that $M^{(\dumk)}(A) \leq M^{(\dumk)}(X[t_J^-, t_J^+]) \leq \dumr/4$ by $(2)$ of \cref{claim:X subinterval good M new}. Hence by \cref{lem: var def of k-cap},
		\begin{equation*}
		\Capkl(A) \geq \frac{\dumk|A|^2}{2nM^{(\dumk)}(A)} \geq \frac{2\dumk\dumr \dumep^{\frac{10z}{3}}}{n} \, .
		\end{equation*}
		Due to the buffers of length $\dumr/24$ present in the beginning and ending of $I$ it follows that $\left(\Gamma_n^* \setminus I\right) \subseteq X[0, t_J^- - \dumr/24] \cup X\left[t_J^+ + \dumr/24, M\sqn\right] \cup W$. Hence by $(3)-(4)$ of \cref{claim:X subinterval good M new} we get
		\begin{align*}
		\close_{\dumk} \left(A,  \Gamma_n^* \setminus I \right) &\leq \close_{\dumk} \left(X[t_J^-, t_J^+], X[0, t_J^- - \dumr/24] \cup X\left[t_J^+ + \dumr/24, M\sqn\right] \cup X' \right) \\
		&\leq \frac{2\dumr \dumk^2M}{ n^{3/2}}
		= \frac{2\dumep^{\frac{10z}{3}}\dumr \dumk}{n} \cdot \frac{M\dumep^{\frac{1}{2}-\frac{10z}{3}}\dumr}{\sqrt{n}} \, ,
		\end{align*}
		where we used $\dumk = \dumep^{1/2}\dumr$ and $|X| + |X'| \leq M\sqn$.	Consequently, since we chose $c<\frac{1}{2M}$ and $z=1/20$, we can reduce $\eps$ if necessary so that
		\begin{align*}
		\Capkl\left(I, \Gamma_n^* \setminus I\right) \geq \Capkl\left(A, \Gamma_n^*\setminus I\right) 
		&\geq \Capkl(A) - \close_{\dumk} (A, \Gamma_n^* \setminus I) \\
		&\geq \frac{2\dumk\dumr \dumep^{\frac{10z}{3}}}{n} \left( 1 -\frac{M\dumep^{\frac{1}{2}-\frac{10z}{3}}\dumr}{\sqrt{n}} \right) \geq \frac{\dumk\dumr \dumep^{\frac{10z}{3}}}{n},
		\end{align*}
		as required. Finally, to cover the case where $I$ is primarily contained in $\LE(X')$ rather than $\LE(X)$, note that we can reverse the roles of $X$ and $X'$ above to obtain a fourth contribution to the probability of $\frac{\delta}{4}$. This concludes the proof.
		\end{proof}

\subsection{Proof of \cref{thm:capacityofprefix}}\label{sctn:capacity tail decay}\label{sbsctn:cap of prefix}

We assume that $\Enc$ holds and let $\ell\leq N_n$ be a fixed scale throughout the proof. The proof will involve applications of \cref{claim: LE prefix to SRW interval2} and  \cref{claim:X subinterval good M new}; for these we will take $\secp = \theta + \frac{1}{c^2}$, take $\K=\min\{\frac{z}{3\secp}$, $\frac{1}{24}\}$, take $\Mm = \dumep^{-1/10}$ and take $W = \Gamma_n$. These four variables will assume these values throughout the proof.
\begin{proof}
Let $x$ be some vertex of $G_n$ and let $X$ be a random walk started from $x$ and let $\tau_{\Gamma_n}$ denote the time at which $X$ hits $\Gamma_n$, so that $\Gamma_x = \LE(X[0, \tau_{\Gamma_n}])$. We start by upper bounding the time until $X$ hits $\Gamma_n$. On the event $\Enc$ we have from \cref{thm:gammaisnice} (I) that $\Capp_{\sqrt{n}} (\Gamma_n) \geq 2c$. It therefore follows from \cref{cl:cap:mix} that for each $i \geq 1$, 
	\[
	\pr \left( X[(i-1) \sqrt{n}, i\sqrt{n}) \cap \Gamma_n \neq \emptyset \middle| X[0, (i-1) \sqrt{n}) \cap \Gamma_n = \emptyset \right) \geq \frac{2c}{3}.
	\]
Consequently, taking a product over $i \leq \Mm$ it follows that
\begin{equation}\label{eqn:hitting time UB}
\pr \left( \tau_{\Gamma_n} > \Mm\sqrt{n} \right) \leq \left(1 - \frac{2c}{3} \right)^{\Mm} \leq e^{-2c\Mm/3}.
\end{equation}
Provided that $\eps$ is small enough as a function of $c$, this is much smaller than the required bound on the probability of \cref{thm:capacityofprefix}; hence we work on the event $\{\tau_{\Gamma_n} \leq \Mm\sqrt{n}\}$ for the rest of the proof. Furthermore, if $\{|\Gamma_x| \leq \frac{\dumr}{3}\}$, then $\Gamma_x^{5\dumr/6}$ contains a segment $I \subset \Gamma_n$ with $|I| = \frac{\dumr}{3}$ (see the definitions above \cref{thm:capacityofprefix}). Hence on the event $\Enc$ it follows from \cref{thm:gammaisnice} (II) (which we just proved in the previous subsection) that
		\[
			\Capp_{\dumk}\left(\Gamma_x^{5\dumr/6}, (\Gamma_n \cup \Gamma_x) \setminus \Gamma_x^{5\dumr/6}\right) \geq \Capp_{\dumk} (I, \Gamma_n \setminus I) \geq \frac{\dumep^{1/6} \dumk \dumr}{n},
		\]
		so that the tail bound of \cref{thm:capacityofprefix} holds. We therefore also assume that $\{|\Gamma_x| > \frac{\dumr}{3}\}$.

		Under $\Enc$ we have that $\Capp_{\sqn}(\Gamma_n) \geq 2c$ and hence by \cref{claim: cap to bub UB} we have that $B_{\Gamma_n}(G_n) \leq \theta + \frac{1}{c^2}=\secp$, so $X$ is bubble-terminated random walk with bubble sum bounded by $\secp$. Now divide $X([0, \Mm \sqrt{n}])$ into $2\Mm\sqrt{n}( \K \dumep^z\log\dumep^{-1}\dumr)^{-1}$ disjoint consecutive intervals of length $\frac{1}{2} \K \dumep^z\log\dumep^{-1}\dumr$. Also note that $2\Mm\sqrt{n} \leq \K \dumep^z\log\dumep^{-1}e^{\frac{\K z}{12}(\log \dumep^{-1})^{2}}\dumr$ for all $\ell \leq N_n$ provided that $\eps$ is small enough as a function of $\K$ and $c$ (i.e., depending on $\theta$ and $c$). By the union bound and \cref{claim:X subinterval good M new}, provided $n$ exceeds some $N(c, \eps)$ the probability that all of these consecutive intervals contain a subinterval satisfying points $(1)-(4)$ of \cref{claim:X subinterval good M new} is therefore at least
		\[
1 - 2\Mm\sqrt{n}( \K \dumep^z\log\dumep^{-1}\dumr)^{-1}e^{-\frac{\K z}{6}(\log \dumep^{-1})^{2}} \geq 1 - e^{\frac{\K z}{12}(\log \dumep^{-1})^{2}} e^{-\frac{\K z}{6}(\log \dumep^{-1})^{2}} = 1 - e^{-\frac{\K z}{12}(\log \dumep^{-1})^{2}}.
		\]
In particular, since \textit{any} interval $I \subset [0, \Mm\sqrt{n}]$ of length $\K \dumep^z\log\dumep^{-1}\dumr$ must contain an entire consecutive interval of the form above, we deduce that, provided $n \geq N(c, \eps)$,
\begin{equation}\label{eqn:clm 4.1 good event}
\pr \left(\forall I \subset [0, \Mm\sqrt{n}], |I| = \K \dumep^z\log\dumep^{-1}\dumr: \exists J \subset I \ \mathrm{ satisfying } \ (1)-(4) \ \mathrm{ of \ \cref{claim:X subinterval good M new}} \right) \geq  1 - e^{-\frac{\K z}{12}(\log \dumep^{-1})^{2}}.
\end{equation}
		 
		 We next apply \cref{claim: LE prefix to SRW interval2} with $j=1$ to obtain that, provided $n \geq N(c, \eps)$, with probability at least 
\begin{equation}\label{eqn:claim 4.2 prob}
		1- \exp\left(-\frac{\dumep^{-\frac{z}{3}}}{\log(1/\dumep)}\right)
\end{equation}
		there exists $t \leq \frac{\dumr}{3}$ such that for all $1 \leq m \leq \K \dumep^{-\frac{2z}{3}}(\log \dumep^{-1})$, 
		\begin{equation}\label{eqn:special t def 2}
		\lambda_{t+m \dumep^{\frac{5z}{3}} \dumr}(X) - \lambda_{t+(m - 1) \dumep^{\frac{5z}{3}} \dumr}(X) \leq \dumep^{z}\dumr\, .
		\end{equation}
		We write $X[t_1,t_2)$ for the corresponding part of $X$, so that $t_1 = \lambda_{t}(X)$ and $t_2 = \lambda_{t+\K\dumep^{z}\log(\dumep^{-1}) \dumr}(X)$. It holds by construction that
		\begin{equation}\label{eqn:t1-t2 bounds 2}
		\K \dumep^{z} \log \dumep^{-1} \dumr \leq t_2 - t_1 \, , 
		\end{equation}
		and moreover since we assumed that $\{|\Gamma_x| > \frac{\dumr}{3}\}$ and $\{\tau_{\Gamma_n} \leq \Mm\sqrt{n}\}$, we clearly have that $t_2 \leq \Mm\sqrt{n}$. On the event $\Enc$, it therefore follows from \eqref{eqn:clm 4.1 good event} and \eqref{eqn:t1-t2 bounds 2} that the probability that $[t_1,t_2]$ does not contain a subinterval $J=[t_J^-,t_J^+]$ satisfying conditions $(1)-(4)$ of \cref{claim:X subinterval good M new} is bounded by $e^{-\frac{\K z }{12} (\log \dumep^{-1})^2}$.
		
For the rest of the proof we assume that such a $J$ exists and that $\Enc$ holds. By part (1) of \cref{claim:X subinterval good M new} and \eqref{eqn:special t def 2},
\[
|J| = 2\dumep^{z}\dumr \geq \lambda_{t+m \dumep^{\frac{5z}{3}} \dumr}(X) - \lambda_{t+(m - 2) \dumep^{\frac{5z}{3}} \dumr}(X)
\]
for each $2 \leq m\leq \K \dumep^{-\frac{2z}{3}}(\log \dumep^{-1})$. Therefore there must exist some $m^* \leq \K \dumep^{-\frac{2z}{3}}(\log \dumep^{-1})$ such that
		\[
		t_J^- \leq \lambda_{t+(m^* - 1) \dumep^{\frac{5z}{3}} \dumr}(X) < \lambda_{t+m^* \dumep^{\frac{5z}{3}} \dumr}(X) \leq t_J^+.
		\]
		Now set $A=(\Gamma_x)_{[t+(m^* - 1) \dumep^{\frac{5z}{3}} \dumr, t+m^* \dumep^{\frac{5z}{3}} \dumr)}.$
		
		Note that, by construction, it holds that $A \subset \Gamma_x^{\frac{\dumr}{3}}$, that $|A|=\dumep^{\frac{5z}{3}} \dumr$ and that $A \subset X[t_J^-, t_J^+]$. Since $M^{(\dumk)}(A)$ as defined in \eqref{def:Mk} is monotone with respect to $A$ we have that $M^{(\dumk)}(A) \leq M^{(\dumk)}(X[t_J^-, t_J^+]) \leq \dumr/4$ by $(2)$ of \cref{claim:X subinterval good M new}. Hence by \cref{lem: var def of k-cap},
		\begin{equation*}
		\Capkl(A) \geq \frac{\dumk|A|^2}{2nM^{(\dumk)}(A)} \geq \frac{2\dumk\dumr \dumep^{\frac{10z}{3}}}{n} \, .
		\end{equation*}

Since $t_J^+ \leq \lambda_{\dumr/3}(X)$ by construction, and $W=\Gamma_n$, it also follows that $$(\Gamma_n \cup \Gamma_x) \setminus \Gamma_x^{5r_\ell/6} \subset W \cup  X[t_J^+ + \dumr/24, \Mm\sqn].$$ Therefore, since $\close_{\dumk}(\cdot, \cdot)$ is monotone and subadditive in each argument (by definition and the union bound), applying $(3)-(4)$ of \cref{claim:X subinterval good M new} we deduce that
		\begin{align*}
		\close_{\dumk} \left(A,  (\Gamma_n \cup \Gamma_x) \setminus \Gamma_x^{5r_\ell/6} \right) &\leq \close_{\dumk} \left(X[t_J^-, t_J^+], X\left[t_J^+ + \dumr/24, \Mm\sqn\right] \right) + \close_{\dumk} \left(X[t_J^-, t_J^+], W \right) \\
		&\leq \frac{\dumr \dumk^2 (\Mm\sqn + |W|)}{ n^{2}}
		\leq \frac{2\dumep^{\frac{10z}{3}}\dumr \dumk}{n} \cdot \frac{\Mm\dumep^{\frac{1}{2} - \frac{10z}{3}}\dumr}{\sqrt{n}} \, ,
		\end{align*}
		(where we used $\dumk = \dumep^{1/2}\dumr$ and $|W| \leq \dumep^{-1/10} \sqn = M_{\ell}\sqn$ on the event $\Enc$). Consequently, since $z=1/20$, recalling that $\Mm=\dumep^{-1/10}$ and assuming without loss of generality that $c, \eps < 1/2$, we obtain that
		\begin{align*}
		\Capkl \left(\Gamma_x^{\dumr/3},  (\Gamma_n \cup \Gamma_x) \setminus \Gamma_x^{5r_\ell/6} \right) &\geq \Capkl \left(A,  (\Gamma_n \cup \Gamma_x) \setminus \Gamma_x^{5r_\ell/6} \right) \\
		&\geq \Capkl(A) - \close_{\dumk}  \left(A,  (\Gamma_n \cup \Gamma_x) \setminus \Gamma_x^{5r_\ell/6} \right) \\
		&\geq \frac{2\dumk\dumr \dumep^{\frac{10z}{3}}}{n} \left( 1 -\frac{\dumep^{\frac{7}{30}}\dumr}{\sqrt{n}} \right) \geq \frac{\dumk\dumr \dumep^{\frac{1}{6}}}{n}.
		\end{align*}
To summarize, we showed that $\Capkl \left(\Gamma_x^{\dumr/3},  (\Gamma_n \cup \Gamma_x) \setminus \Gamma_x^{5r_\ell/6} \right)$ is large enough on the event $\Enc$ whenever $\{\tau_{\Gamma_n} \leq \Mm\sqrt{n}\}$ and the relevant events in \cref{claim: LE prefix to SRW interval2} and \cref{claim:X subinterval good M new} occur so that we can find $A$ as above. \cref{thm:capacityofprefix} therefore follows on taking a union bound over \eqref{eqn:hitting time UB}, \eqref{eqn:clm 4.1 good event} and \eqref{eqn:claim 4.2 prob}, choosing $\epsilon'$ small enough as a function of $c$ and requiring that $n $ is large enough as a function of $\eps$ and $c$ (since $\K$ and $\secp$ were themselves functions of $c$).
\end{proof}

	\section{Proof of Lemma \ref{lem:interval test tail Bj G'}}\label{sctn:conditional tail bound}\label{sctn:proof on conditional sunny graph}

	In this section we prove \cref{lem:interval test tail Bj G'}. Throughout we assume that the index $n$, the scale $\ell$ and the paths $\Gamma_n \cup \Gamma_x$ are fixed. We also take the setup of Section \ref{sctn:bootstrap}, as outlined above \cref{lem:interval test tail Bj G'}. This means that we condition on $\Gamma_n \cup \Gamma_x$ and add a sun $\sunk$ to the graph $G_n/(\Gamma_n \cup \Gamma_x)$ with weights chosen so that a lazy random walk will jump to the sun at the next step with probability $\frac{1}{\dumk}$. We also assume that the intervals $A_j \subset \Gamma_x$ for $j=1,\ldots, (2^{13}e)^{-1}\dumep^{-\frac13}$ are predefined as described in \cref{sctn:bootstrap}. 
	For the rest of this section we work on the graph $G_n/(\{\sunk\}\cup \Gamma_n \cup \Gamma_x)$.	Recall that
	\begin{equation}\label{eq:parameters3} 
		r_\ell = \frac{r}{2^\ell}, \quad \epsilon_\ell = \frac{\epsilon}{4^\ell}, \quad k_\ell = \eps_\ell^{1/2}\dumr,  \quad |A_j| \leq 2^{13}e\dumep^{1/3}\dumr, \quad \Capkl (A_j, (\Gamma_n \cup \Gamma_x) \setminus A_j) \geq \frac{2^{11}e \dumep \dumr^2}{n}\,.
	\end{equation}
	When we talk about capacity and relative capacity in this section, we are always referring to these quantities on the \textit{original graph} $G_n$.

Recall also that, for each $j \leq (2^{13}e)^{-1} \dumep^{-\frac13}$, we let $I_j(\dumk)$ be the set of vertices connected to the contracted vertex in $\UST(G_n/(\{\sunk\} \cup \Gamma_n \cup \Gamma_x)$ by a path of length at most $\dumk$, such that the last edge on this path has an endpoint in $A_j$. This also includes vertices originally in $A_j$ before the contraction. Since $\ell$ is fixed for this section, we also set $X_j = |I_j(\dumk)|$.
		
	
	\begin{claim}\label{cl: cap LB}
		Assume that $\Gamma_x$ and $\Gamma_n$ satisfy \eqref{eq: good paths event} (and therefore \eqref{eq:parameters3}). Fix a scale $\ell$ and consider the graph $G_n / (\Gamma_n\cup \Gamma_x \cup \{\sunk\})$ as described above.
		Then, for every $j\in \{1,\ldots, (2^{13}e)^{-1} \dumep^{-\frac13}\}$ we have 
		\begin{equation*}
			\E[X_j] \geq n\cdot \frac{\Capkl(A_j, (\Gamma_n \cup \Gamma_x)\setminus A_j)}{2e} \geq 2^{10}\dumep\dumr^2.
		\end{equation*}
		
	\end{claim}
	\begin{proof}
		By Wilson's algorithm, for every $v\in G_n$, we have that $v\in I_j(\dumk)$ if a random walk starting at $v$ hits $\Gamma_n \cup \Gamma_x \cup \{\sunk\}$ at $A_j$ and its loop erasure is of length at most $\dumk$. Therefore,
		\begin{equation*}
			\pr(v\in I_j) \geq \pr_v(\tau_\sunk > \dumk)\cdot\pr_v(\tau_{A_j} < \dumk \pand \tau_{A_j} < \tau_{(\Gamma_n \cup \Gamma_x) \setminus A_j} \mid \tau_\sunk > \dumk),
		\end{equation*}
		where all hitting times refer to hitting times of the lazy random walk.
		First note that $\pr(\tau_\sunk > \dumk) = \left(1-\frac{1}{\dumk} \right)^{\dumk} \geq \frac{1}{2e}$. Then, given $\tau_\sunk > \dumk$, the lazy random walk until time $\dumk$ is distributed as a lazy random walk on $G_n / (\Gamma_n \cup \Gamma_x)$.
		Since all degrees in $G_n$ are equal we get 
		\begin{align*}
			\E[X_j] = \sum_{v\in G_n}\pr(v \in I_j) &\geq \sum_{v\in G_n}\frac{\pr_v(\tau_{A_j} < \dumk \pand \tau_{A_j} < \tau_{(\Gamma_n \cup \Gamma_x) \setminus A_j} \text{in } G_n/(\Gamma_n\cup\Gamma_x)) }{2e} \\&=  n\cdot \frac{\Capkl(A_j, (\Gamma_n \cup \Gamma_x)\setminus A_j)}{2e} \, ,\qedhere
		\end{align*} 
		and we conclude the proof using \eqref{eq:parameters3}.
	\end{proof}

 	Recall that our goal is to find a lower bound for the probability that $\sum_{i=1}^{j+1} X_i$ is large given that $\sum_{i=1}^j X_i$ is small. To this end, let $\Phi_j$ be the (random) edge-set consisting of all simple paths of length at most $\dumk$ in $\UST(G_n / (\{\sunk\} \cup \Gamma_n \cup \Gamma_x))$ that end in the contracted vertex through $A_1 \cup \ldots \cup A_j$. Note that $\Phi_j$ determines $\{\sum_{i=1}^j X_i \leq 16\dumep\dumr^2\}$ and that conditioning on $\Phi_j = \Wj$ for some set of edges $\Wj$ means precisely that the edges of $\Wj$ are in the $\UST$ (open edges) and all other edges touching a vertex $v$ of $\Wj$, such that the path in $\Wj$ from $v$ to $A_1 \cup \ldots \cup A_j$ is of length at most $\dumk-1$, must not belong to the $\UST$ (closed edges). These open and closed edges determine $\Phi_j$. Thus, to condition on $\Phi_j = \Wj$, we erase the closed edges and contract all the open edges to a single vertex which coincides with $\Gamma_n \cup \Gamma_x \cup \{\sunk\}$, and call the remaining graph $\Gj$. By the spatial Markov property of the $\UST$ \cite[Proposition 4.2]{benjamini2001} we have that $\UST(\Gj)$ together with $\varphi_j$ is distributed precisely as $\UST(G_n / (\{\sunk\} \cup \Gamma_n \cup \Gamma_x))$ conditioned on $\Phi_j = \varphi_j$. Note that the event $\{\sum_{i=1}^j X_i \leq 16\dumep\dumr^2\}$ occurs if and only if $|V(\varphi_j)|\leq 16\dumep\dumr^2$ where $V(\varphi_j)$ are the vertices touched by $\varphi_j$.




	
	

	\begin{claim}\label{claim:RW hitting prob UB on G'}
		Let $\Wj \subset E(G_n)$ be such that $\pr \left( \Phi_j = \Wj \right)>0$ and $|V(\Wj)| \leq 16\dumep\dumr^2$. Let $\gamma$ be a simple path in $\Gj$ that ends at the contracted vertex. Let $(Y_t)_{t \geq 0}$ denote a lazy random walk on $\Gj$ started from a uniform vertex $U$ of the original graph $G_n$ and killed upon hitting the contracted vertex of $\Gj / \gamma$, that is, the upon hitting the vertex corresponding to the contracted edges $\{\sunk\} \cup \Gamma_n \cup \Gamma_x \cup \Wj \cup \gamma$. Denote by $V(\Gamma_n \cup \Gamma_x \cup \Wj \cup \gamma)$ the set of vertices of $G_n$ touched by the edges in $\Gamma_n \cup \Gamma_x \cup \Wj \cup \gamma$ and let $M \subset V(\Gamma_n \cup \Gamma_x \cup \Wj \cup \gamma)$ be a fixed subset of vertices of $G_n$. Then 
	\begin{align*}
		\pr \left(Y \,\,\mathrm{ hits }\,\, M\right) \leq \frac{64\dumep \dumr^2}{n} + \frac{4|M|\dumk}{n} \, .
	\end{align*}
	(Recall here that to ``hit $M$" means to hit the contracted vertex via an edge that originally led to $M$).
\end{claim}	
\begin{proof} 
	Let $\Delta = \deg (G_n)$, i.e. the degree of vertices in the original graph $G_n$ (recall that by \cref{assn:main} all vertex degrees are equal), and let 
	\[
	\V = \left\{v \in V(G_n) \setminus V(\Gamma_n \cup \Gamma_x \cup \Wj): \deg_{\Gj} (v) \leq \frac{\Delta}{2}\right\} \, .
	\]
	In other words, $\V$ is the set of all vertices of $G_n$ that are not in the contracted vertex of $\Gj$ that are adjacent to at least $\Delta/2$ closed edges. Since $|V(\Wj)|\leq 16\dumep\dumr^2$, the number of closed edges is no more than $16 \Delta \dumep\dumr^2$. Hence the number of vertices touching a closed edge is at most $32 \Delta \dumep\dumr^2$ and each vertex in $\V$ contributes at least $\Delta/2$ to this count, so $|\V|\leq 64\dumep\dumr^2$.


	Recall that, when we originally added the sun to $G_n / (\Gamma_n \cup \Gamma_x)$, we chose the weights so that the probability that a lazy random walk on $G_n / (\Gamma_n \cup \Gamma_x)$ would jump to the sun at the next step is always $\frac{1}{\dumk}$. In the graph $\Gj$, we have now contracted some edges and closed some other edges. For any $x \in \Gj$, these operations can only increase the probability that $Y$ will jump directly to the sun from the vertex $x$. Therefore, by coupling, we can separate the sun and its incident edges, and obtain an upper bound for $\pr \left(Y \,\,\mathrm{ hits }\,\, M\right)$ by instead bounding the same probability for a lazy random walk on $(\Gj/\gamma) \setminus \{ \sunk \}$ with an independent \textsf{Geo}$(\frac{1}{\dumk})$ killing time. We denote this second lazy random walk by $Y'$.
	
	To control capacity on $(\Gj/\gamma) \setminus \{ \sunk \}$ we will need to work with the stationary measure on $(\Gj/\gamma) \setminus \{ \sunk \}$, which we denote by $\pi'$. (The bound on $|\V|$ above will then help us to compare $\pi'$ with the uniform measure). We define $\pi'$ on all of $G_n$ by remembering the edges from before the contraction. In particular, this means that for $u \in G_n$, we have
	\[
\pi' (u) = \frac{\Delta - N^{\mathrm{cl}}(u)}{\sum_{v \in G_n}(\Delta - N^{\mathrm{cl}}(v))},	
	\]
	where $N^{\mathrm{cl}}(v)$ denotes the number of closed edges incident to $v$ in $\Gj$.

We now observe the following. If $u \in G_n \setminus \V$, then
	\begin{align*}
		\pi'(u) \geq \frac{\Delta/2}{n\Delta} \geq \frac{1}{2n}.
	\end{align*}
	Also, for every $u \in G_n$, provided that $c<1/32$ and $\eps<1$, we have that
	\begin{align*}
		\pi'(u) \leq \frac{\Delta}{n\Delta - 32 \Delta \dumep \dumr^2} \leq \frac{2}{n}.
	\end{align*}
	In what follows, these two observations mean that we will be able to switch between $\pi'$ and $U$ and vice versa provided we multiply by $2$. In particular, we can write
	\begin{align*}
		\pr_{U} \left(Y' \text{ hits } M\right) \leq \pr \left(U \in \V \right) + 2\pr_{\pi'}\left( Y' \text{ hits } M \right) &\leq \frac{64\dumep \dumr^2}{n} + 2\sum_{t=0}^{\infty} \pr_{\pi'} \left(Y_t' \in M\right) \\
		&= \frac{64\dumep \dumr^2}{n} + 2\sum_{t=0}^{\infty} \pi' (M) \pr\left( \textsf{Geo}\left(\frac{1}{\dumk}\right) \geq t \right) \\
		&\leq \frac{64\dumep \dumr^2}{n} + \frac{4|M|}{n} \sum_{t=0}^\infty \left( 1 - \frac{1}{\dumk}\right)^t =  \frac{64\dumep \dumr^2}{n} + \frac{4|M|\dumk}{n}.\qedhere
	\end{align*}
\end{proof}

We will use \cref{claim:RW hitting prob UB on G'} to prove the following upper bounds.
	
	\begin{lemma}\label{lem:exp and var UB G'}
		Let $\Wj \subset E(G_n)$ be such that $\pr \left( \Phi_j = \Wj \right)>0$ and $|V(\Wj)| \leq 16\dumep\dumr^2$. Then
		\begin{enumerate}[(i)]
			\item $\E\left[ X_{j+1}  \mid \Phi_j = \Wj \right] \leq 5 \cdot 2^{13} \cdot e \cdot \dumep^{5/6}\dumr^2$.
			\item $\var \left(  X_{j+1} \mid \Phi_j = \Wj \right) \leq 68\dumep\dumr^2 \E\left[X_{j+1}\mid \Phi_j = \Wj\right]$, 
		\end{enumerate}
	\end{lemma}
	\begin{proof} We condition on $\Phi_j = \Wj$ throughout this proof so our probability space is that of $\UST(\Gj)$. To prove (i) we condition on $\Phi_j = \Wj$ and take any $v \in \Gj \setminus \{\sunk\}$. By Wilson's algorithm on the graph $\Gj$, we have that $\pr \left(v \in A_{j+1} \mid \Phi_j = \Wj\right)$ is upper bounded by the probability that a lazy random walk started at $v$ hits $A_{j+1}$ before it hits the sun. If $(Y_t)_{t \geq 0}$ is such a random walk starting from a uniform vertex of $G_n$, by \cref{claim:RW hitting prob UB on G'} and \eqref{eq:parameters3} we have that
			\begin{align*}
				\E\left[ X_{j+1} \mid \Phi_j = \Wj \right] &\leq n\pr \left(Y_t \text{ hits } A_{j+1}\right) \leq 64\dumep \dumr^2 + 4|A_{j+1}|\dumk \leq 5|A_{j+1}|\dumk \leq 5 \cdot 2^{13} \cdot e \cdot \dumep^{5/6}\dumr^2 \, ,
				%
			\end{align*}
			where we also used the upper bound on $|A_{j+1}|$ in \eqref{eq:parameters3}.

			To ease notation in the proof of (ii) we write $\pr(\cdot), \E\left[\cdot\right]$ and $\Var(\cdot)$ for $\pr\left(\cdot \mid \Phi_j = \Wj \right)$ and the corresponding expectation and variance. 
			We have 
			\begin{align}\label{eq: cond var calc}
				\var(X_{j+1}) &= \sum_{u,v \in G_n} \pr(u,v \in I_{j+1}) - \pr(u\in I_{j+1})\pr(v\in I_{j+1}).
				\\& = \sum_v \sum_u \Big [\pr\left(u \in I_{j+1} \mid v \in I_{j+1}\right) - \pr(u\in I_{j+1})\Big]\pr(v\in I_{j+1}). \nonumber
			\end{align}
			Fix some $v$, and rewrite the inner sum as
			\begin{align*}
				 n\big [ \pr(U\in I_{j+1} \mid v\in I_{j+1}) - \pr(U \in I_{j+1})\big ],
			\end{align*}
				where $U$ is a vertex chosen uniformly from $G_n$.
			We decompose the event $v\in I_{j+1}$ according to $\gamma_v$, the path from $v$ to $A_{j+1}$ in $\Gj$ which is of length at most $\dumk$ and obtain that
			\begin{align*}
				&\pr(U\in I_{j+1} \mid v\in I_{j+1}) - \pr(U \in I_{j+1}) \\&= \sum_{\gamma_v}\pr(\gamma_v \subseteq \UST(\Gj) \mid v\in I_{j+1})\left[\pr(U\in I_{j+1} \mid \gamma_v \subseteq \UST(\Gj)) - \pr(U\in I_{j+1})\right].
			\end{align*}
			To compare $\pr(U\in I_{j+1} \mid \gamma_v\subseteq \UST(\Gj))$ and  $\pr(U\in I_{j+1})$ we note again from the spatial Markov property \cite[Proposition 4.2]{benjamini2001} that the rest of $\UST(\Gj)$ given $\gamma_v\subseteq \UST(\Gj)$ is the $\UST$ on the graph obtained from $\Gj$ by contracting $\gamma_v$. By coupling Wilson's Algorithm running on each of the two graphs ($\Gj$ and $\Gj/\gamma_v$), the difference between the two quantities can be upper bounded by the probability that a random walk starting from a uniform vertex of $G_n$ hits $\gamma_v$ before it hits the new sun $\sunk$.
			By \cref{claim:RW hitting prob UB on G'}, this is bounded by $\frac{64\dumep \dumr^2}{n} + \frac{4\dumk^2}{n}$ uniformly for all $\gamma_v$ with $|\gamma_v| \leq \dumk$. 
			As $\sum_{\gamma_v}\pr(\gamma_v \subseteq \UST(\Gj) \mid v\in I_{j+1})$ sums to $1$ we obtain that
			\begin{equation*}
				n\left(\pr(U\in I_{j+1} \mid v\in I_{j+1}) - \pr(U \in I_{j+1})\right) \leq 64\dumep\dumr^2 + 4\dumk^2.
			\end{equation*}
			Plugging this into \eqref{eq: cond var calc} and using \eqref{eq:parameters3} we obtain 
			\begin{equation*}
				\var(X_{j+1}) \leq \sum_v 	(64\dumep\dumr^2 + 4\dumk^2) \pr(v\in I_{j+1}) \leq (64\dumep\dumr^2 + 4\dumk^2) \E[X_{j+1}] = 68\dumep\dumr^2 \E[X_{j+1}].\qedhere
			\end{equation*}
		
	\end{proof}

Recall that $\Bj = \{ \sum_{i=1}^j X_i \leq 16\dumep \dumr^2\}$, and $\Phi_j$ is the random edge-set induced by $\cup_{i=1}^j I_i (\dumk)$. Under $\Bj$, we have no information about the structure of $\Phi_j$, other than that $|\Phi_j| \leq 16\dumep \dumr^2$ (and this was important for the factorization in the proof of \cref{cor:volLBfromCap}). However, in order to prove \cref{lem:interval test tail Bj G'}, we will need the following lower bound.

	\begin{lemma}\label{lem: some structures are good}
		It holds that
		\begin{equation*}
			\pr \left(\Phi_j \in \{\Wj : \E[X_{j+1} | \Phi_j = \Wj] \geq 2^9\dumep\dumr^2 \} \mid \Bj\right) \geq \frac{\dumep^{1/6}}{80e}.
		\end{equation*}
	\end{lemma}
	\begin{proof}
		Recall that we are working on the graph $G_n/(\Gamma_n\cup \Gamma_x \cup \{\sunk\})$. Suppose that $\sum_{i=1}^j X_i \leq 16\dumep \dumr^2$, and note that this event can be written as the disjoint union of all possible $\Wj$ such that $\pr(\Phi_j = \Wj)>0$ and $|V(\Wj)|\leq 16\dumep \dumr^2$.
		When conditioning on $\Phi_j = \Wj$ for some $\Wj$ we work on the graph $\Gj$, as defined above \cref{claim:RW hitting prob UB on G'}. Note that by \cref{lem:exp and var UB G'}, we have that for every $\Wj$ with $|V(\Wj)| \leq 16\dumep \dumr^2$ and $\pr \left( \Phi_j = \Wj \right)>0$ that 
		\begin{equation*}
			\E\left[X_{j+1} | \Phi_j = \Wj \right]  \leq 5 \cdot 2^{13} e \dumep^{5/6}\dumr^2. 
		\end{equation*} 
		Furthermore, by \cref{cl: expected stoch dom} and \cref{cl: cap LB} we have that
		\begin{equation*}
			\E\left[X_{j+1} \middle\vert \sum_{i=1}^j X_i \leq 16\dumep \dumr^2 \right] \geq \E\left[X_{j+1}\right] \geq 2^{10}\dumep\dumr^2.
		\end{equation*}
		Write $\E'$ and $\pr'$ for the expectation and probability operators $\pr(\cdot \mid \Bj)$ and $\E\left[\cdot \mid \Bj\right]$ on $G_n / (\{\sunk\} \cup \Gamma_n \cup \Gamma_x)$.
		We have that
		\begin{align*}
			\E'\left[X_{j+1} \middle\vert \Phi_j \right] &\leq 5 \cdot 2^{13} \cdot e\cdot\dumep^{5/6}\dumr^2\quad \text{a.s.},
			\\\E'\left[\E'\left[X_{j+1} \mid \Phi_j\right]\right] &= \E'[X_{j+1}] \geq 2^{10}\dumep\dumr^2.
		\end{align*} 
		Therefore
		\begin{equation*}
			2^{10}\dumep\dumr^2 \leq \E'[X_{j+1}] \leq 2^9\dumep\dumr^2 + \pr'[\E'[X_{j+1} \mid \Phi_j] \geq 2^9\dumep\dumr^2] \cdot 5 \cdot 2^{13} \cdot e \dumep^{5/6}\dumr^2.
		\end{equation*}
Rearranging, we deduce that
		\begin{equation*}
			\pr'\left(\E'[X_{j+1} \mid \Phi_j] \geq 2^9\dumep\dumr^2\right) \geq \frac{\dumep^{1/6}}{80e},
		\end{equation*}
		as required.
	\end{proof}
	
	\begin{lemma}\label{lem: cheby for good wj}
		Suppose $\Wj$ is such that $\pr \left( \Phi_j = \Wj \right)>0$ and $\E\left[X_{j+1} \mid \Phi_j = \Wj\right] \geq 2^9\dumep \dumr^2$. Then 
		\[
		\pr\left( X_{j+1} \leq 16\dumep \dumr^2 \mid \Phi_j = \Wj \right) \leq \frac{1}{2}.
		\]
	\end{lemma}
	\begin{proof}
		The result is a straightforward application of Chebyshev's inequality, similarly to \cite[Lemma 6.13]{Hutchcroft2020universality}. First note that it follows from \cref{lem:exp and var UB G'}(ii) that
		\begin{equation*}
			\Var \left(  X_{j+1} \mid \Phi_j = \Wj \right) \leq
			68\dumep\dumr^2\E\left[ X_{j+1} \mid \Phi_j = \Wj \right] \leq \frac{68}{2^{9}}\E\left[ X_{j+1} \mid \Phi_j = \Wj \right]^2,
		\end{equation*}
		where in the last inequality we used that $\E\left[X_{j+1} \middle\vert \Phi_j = \Wj\right] \geq 2^{9}\dumep \dumr^2$ by assumption. Using this again we therefore deduce that
		\begin{align*}
			\pr\left( X_{j+1} \leq 16\dumep \dumr^2 \middle\vert \Phi_j = \Wj \right) \leq \pr\left( X_{j+1} \leq \frac{1}{2^{5}}\E\left[X_{j+1} \middle\vert \Phi_j = \Wj\right] \middle\vert \Phi_j = \Wj \right) \leq \frac{2 \Var \left(  X_{j+1} \middle\vert \Phi_j = \Wj  \right)}{\E\left[ X_{j+1} \middle\vert \Phi_j = \Wj \right]^2} \leq \frac{1}{2}.
		\end{align*}
	\end{proof}
	\begin{proof}[Proof of \cref{lem:interval test tail Bj G'}]
		By \cref{lem: some structures are good}, given that $\sum_{i=1}^j X_i \leq 16\dumep\dumr^2$, we get with probability at least $\dumep^{1/6}/80e$ that $\Wj$ satisfies
		\begin{equation*}
			\E[X_{j+1} \mid \Phi_j=\Wj] \geq 2^9\dumep\dumr^2.
		\end{equation*}
		For every such $\Wj$, by \cref{lem: cheby for good wj}, we get that given $\Phi_j = \Wj$, we have that $X_{j+1} \geq 16\dumep\dumr^2$ with probability at least $1/2$. We conclude that
		\begin{equation*}
			\pr \left( \Bj \middle\vert \Bjj,  (\Gamma_n \cup \Gamma_x), \Capkl (\Gamma_x^{5\dumr/6}  ,  \Gamma_n \cup \Gamma_x \setminus \Gamma_x^{5\dumr/6}) \geq \frac{  \dumr \dumk \dumep^{\frac{1}{6}}}{n} \right) \leq 1 - \frac{\dumep^{1/6}}{160e},
		\end{equation*}
		as required.
	\end{proof}
	
\section{A criterion for GHP convergence}\label{sec:abstract}

\subsection{GP convergence}

We first aim to address the convergence provided in \cref{thm:pr04}. Recall our definitions and notation from \cref{sec:defs} (in fact this section can be seen as a direct continuation of \cref{sec:defs}).
	
\begin{definition} \label{def:GP} Let $(X,d,\mu)$ and $(X',d',\mu')$ be elements of $\XX_c$. The \textbf{Gromov-Prohorov (GP) pseudo-distance} between $(X,d,\mu)$ and $(X',d',\mu')$ is defined as
		\[
		\dGP((X,d,\mu),(X',d',\mu')) = \inf \left\{d_P(\phi_* \mu, \phi_*' \mu') \right\},
		\]
		where the infimum is taken over all isometric embeddings $\phi: X \rightarrow F$, $\phi': X' \rightarrow F$ into some common metric space $F$.
\end{definition}

Thus $\dGP$ is a metric on $\XX_c^{\GP}$ which is the space $\XX_c$ where we identify all mm-space with $\GP$ distance $0$. There is a useful equivalent definition of convergence of mm-spaces with respect to the $\GP$ distance. Given an mm-space $(X,d,\mu)$ and a fixed $m \in \NN$ we define a measure $\nu_m((X,d,\mu))$ on $\RR^{m \choose 2}$ to be the law of the ${m \choose 2}$ pairwise distances between $m$ i.i.d.~points drawn according to $\mu$.

\begin{theorem}[Theorem 5 in \cite{GrevenPfaffWinterGwConvergence}] Let $(X_n,d_n,\mu_n)$ and $(X,d,\mu)$ be elements of $\XX_c^{\GP}$. Then 
$$ \dGP( (X_n,d_n,\mu_n), (X,d,\mu)) \longrightarrow 0 \, ,$$
if and only if for any $m\in \NN$ 
$$ \nu_m((X_n,d_n, \mu_n)) \Rightarrow \nu_m((X,d,\mu)) \, ,$$
where $\Rightarrow$ denotes standard weak convergence of measures on $\RR^{m \choose 2}$. 
\end{theorem}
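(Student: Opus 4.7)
The plan is to prove the two implications separately; both rely on coupling arguments that translate between the ``abstract'' convergence of pushforward measures on a common metric space and the ``concrete'' convergence of the joint laws of pairwise sampled distances.

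For the forward direction, suppose $\dGP((X_n,d_n,\mu_n),(X,d,\mu)) \to 0$. Unfolding the definition, there exist isometric embeddings $\phi_n: X_n \to F_n$ and $\psi_n: X \to F_n$ into common metric spaces with $d_P((\phi_n)_* \mu_n, (\psi_n)_* \mu) \to 0$. By Strassen's coupling characterization of the Prohorov metric (see \cite[Proof of Proposition 6]{MiermontTessellations}), there exist random variables $U^n \sim \mu_n$ and $V \sim \mu$ with $\pr\bigl(d_{F_n}(\phi_n(U^n), \psi_n(V)) > \eps_n\bigr) \le \eps_n$ for some $\eps_n \to 0$. Taking $m$ i.i.d. copies of such pairs and using that $\phi_n, \psi_n$ are isometric embeddings, the triangle inequality yields
\[
|d_n(U_i^n, U_j^n) - d(V_i, V_j)| \le 2\eps_n
\]
simultaneously for all $1 \le i < j \le m$ with probability at least $1 - m\eps_n$. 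This gives the weak convergence $\nu_m((X_n,d_n,\mu_n)) \Rightarrow \nu_m((X,d,\mu))$ in $\RR^{\binom{m}{2}}$ for every fixed $m$.

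For the reverse direction, the key tool is the \emph{empirical reconstruction} of a compact mm-space: for any $(Y,d_Y,\mu_Y) \in \XX_c$, if $V_1, V_2, \ldots$ are i.i.d. samples from $\mu_Y$, then the empirical mm-space $(\{V_1,\ldots,V_m\}, d_Y|_{\cdot}, \frac{1}{m}\sum_{i=1}^m \delta_{V_i})$ converges almost surely in $\dGP$ to $(Y,d_Y,\mu_Y)$ as $m \to \infty$. This is a Glivenko--Cantelli statement: the empirical measure converges weakly to $\mu_Y$, and compactness of $Y$ upgrades weak convergence to Prohorov convergence, while simultaneously ensuring that the supports eventually form an $\eta$-net of $Y$ so that Hausdorff distance is small. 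Now, assuming $\nu_m \Rightarrow \nu_m((X,d,\mu))$ for every $m$ and applying Skorohod's representation on the separable space $\RR^{\binom{m}{2}}$, we can couple samples $(U_1^n,\ldots,U_m^n) \sim \mu_n^{\otimes m}$ and $(V_1,\ldots,V_m) \sim \mu^{\otimes m}$ so that their distance matrices converge a.s. The coupled distance matrix realises an isometric embedding of the two finite empirical mm-spaces into a common metric space in which the bijection $U_i^n \leftrightarrow V_i$ forces both the Hausdorff and Prohorov distances of the empirical measures to go to zero. Combining with the empirical reconstruction on both sides via the triangle inequality for $\dGP$ (choosing $m$ large first, then $n$ large) yields $\dGP((X_n,d_n,\mu_n),(X,d,\mu)) \to 0$.

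The main obstacle is the reverse direction, specifically uniformising the empirical reconstruction in $n$. For the fixed limit space $(X,d,\mu)$ the approximation rate depends only on the mass-concentration profile $\eta \mapsto \inf\{ N : \mu(\text{$\eta$-net of size $N$}) \ge 1-\eta\}$, but along the prelimit sequence we need these profiles to be uniformly controlled so that ``$m$ large'' works for all $n$ simultaneously. The standard way to extract this is to observe that each such profile is a continuous functional of $\nu_m$ for $m$ large, so convergence of the finite-dimensional marginals already forces a uniform modulus. Once this uniformity is in hand, the proof is completed by the two coupling arguments outlined above.
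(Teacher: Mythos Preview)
The paper does not prove this statement; it is quoted verbatim as Theorem~5 of \cite{GrevenPfaffWinterGwConvergence} and used as a black box. There is therefore no ``paper's own proof'' to compare your proposal against.

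On the merits of your attempt: the forward direction is correct and standard. In the reverse direction you correctly isolate the real difficulty, namely that the empirical $m$-sample approximation to $(X_n,d_n,\mu_n)$ must be uniform in $n$, but your resolution is not a proof. The sentence ``each such profile is a continuous functional of $\nu_m$ for $m$ large, so convergence of the finite-dimensional marginals already forces a uniform modulus'' is an assertion, not an argument: the quantity $\inf\{N:\text{some $\eta$-net of size $N$ carries mass }\ge 1-\eta\}$ is not in any obvious sense a continuous functional of a single $\nu_m$, and extracting a uniform covering bound from the family $(\nu_m)_m$ is precisely the nontrivial content of the reconstruction theorem. The argument in \cite{GrevenPfaffWinterGwConvergence} avoids this by showing that the algebra of ``polynomials'' $\Phi(X,d,\mu)=\int \phi\bigl((d(x_i,x_j))_{i<j}\bigr)\,\mu^{\otimes m}(dx)$ is convergence-determining for the GP topology, which converts convergence of all $\nu_m$ directly into GP convergence without needing a uniform empirical approximation rate. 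If you want to make your coupling route rigorous, you would need to first establish GP-precompactness of the sequence from the hypothesis (this is the ``modulus of mass distribution'' tightness criterion, Theorem~3 in \cite{GrevenPfaffWinterGwConvergence}), and only then run the empirical argument along subsequences.
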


This is still not quite the setting of this paper since $\UST$s are \textit{random} mm-spaces. Thus let $\M_1(\XX_c^{{\mathrm{GP}}})$ denote the space of probability measures on $\XX_c^{\mathrm{GP}}$. Each element $\pr\in \M_1(\XX_c^{\mathrm{GP}})$ therefore defines random measures $\left(\nu_m\right)_{m\geq 2}$ and we additionally have annealed measures on $\mathbb{R}^{\binom{m}{2}}$, given by
	\begin{equation*}
	\tilde{\nu}_m(\pr) := \int_{\XX_c^{\mathrm{GP}}} \nu_m((X,d,\mu))d\pr
	\end{equation*}
for each integer $m\geq 2$. It is often more straightforward to prove deterministic weak convergence of the measures $\tilde{\nu}_m$ for each $m\geq 2$, rather than distributional weak convergence of the random measures $\nu_m$ for each $m$. For example, the conclusion of \cref{thm:pr04} can be restated as
\begin{equation}\label{eq:prrestated} \tilde{\nu}_m \left(\left(\UST(G_n), d_n/(\beta_n \sqrt{n}), \mu_n\right)\right) \Rightarrow \tilde{\nu}_m(\CRT) \, ,\end{equation}
for any fixed $m\geq 2$. However, this does not immediately imply that the $\UST$s converge to the $\CRT$ in distribution with respect to the topology of $(\XX_c^{\mathrm{GP}},d_{\mathrm{GP}})$, e.g.~see \cite[Example 2.12 (ii)]{GrevenPfaffWinterGwConvergence}. Indeed, the random mm-spaces need not be tight. It is not hard to show that this is not the case in our setup. 

\begin{lemma}\label{lem:PRgromov}
		Suppose that $(G_n)_{n \geq 1}$ is a sequence of graphs satisfying Assumption \ref{assn:main}. Let $d_n$ denote the graph distance on $\UST(G_n)$ and $\mu_n$ the uniform probability measure on its vertices. Then there exists a sequence $(\beta_n)_n$ satisfying $0<\inf_n \beta_n \leq \sup_n \beta_n < \infty$ such that $(\UST(G_n),\frac{1}{\beta_n \sqrt{n}}d_n, \mu_n)$ converges in distribution to the $\CRT$ with respect the topology of $(\XX_c^{\mathrm{GP}},d_{\mathrm{GP}})$.
\end{lemma}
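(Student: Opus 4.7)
The plan is to combine the finite-dimensional distance convergence from \cref{thm:pr04} with a tightness argument. By \cref{thm:pr04}, for each fixed $m \geq 2$ we have
\[
\tilde{\nu}_m\bigl((\UST(G_n), \tfrac{1}{\beta_n \sqrt{n}}d_n, \mu_n)\bigr) \Rightarrow \tilde{\nu}_m(\CRT)
\]
weakly on $\RR^{\binom{m}{2}}$; this is merely a reformulation of \cref{thm:pr04}, as already observed in \eqref{eq:prrestated}. The goal is then to promote this annealed convergence to distributional convergence of the random mm-spaces in $(\XX_c^{\GP}, d_{\GP})$, the obstruction being the tightness issue flagged by \cite[Example 2.12(ii)]{GrevenPfaffWinterGwConvergence}.

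For tightness, I would specialize the display above to $m=2$: since the annealed pairwise-distance distribution of the rescaled USTs converges weakly to the proper probability measure $\tilde{\nu}_2(\CRT)$ on $\RR$, the family $\{\tilde{\nu}_2(\mathbb{P}_n)\}_n$ is tight in $\M_1(\RR)$. I would then invoke the standard tightness criterion for random mm-spaces in the Gromov--Prohorov topology (see \cite{GrevenPfaffWinterGwConvergence}): tightness of $\{\tilde{\nu}_2(\mathbb{P}_n)\}_n$ in $\M_1(\RR)$ suffices for tightness of $\{\mathbb{P}_n\}_n$ as a family of probability measures on $(\XX_c^{\GP}, d_{\GP})$.

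For uniqueness of the subsequential limit, I would appeal to Prokhorov: every subsequence of $\{\mathbb{P}_n\}$ admits a further subsequence converging weakly to some $\mathbb{P}^{\ast}$. Since the map $(X,d,\mu) \mapsto \nu_m$ is continuous from $(\XX_c^{\GP}, d_{\GP})$ to $\M_1(\RR^{\binom{m}{2}})$ for every $m$ (by Theorem 5 of \cite{GrevenPfaffWinterGwConvergence}, as quoted in the excerpt), the polynomial functional $(X,d,\mu) \mapsto \int \phi \, d\nu_m$ belongs to $C_b(\XX_c^{\GP})$ for each $\phi \in C_b(\RR^{\binom{m}{2}})$. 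Consequently $\tilde{\nu}_m(\mathbb{P}^{\ast}) = \tilde{\nu}_m(\CRT)$ for every $m$. A Vershik-type reconstruction result (e.g.\ within the framework of \cite{GrevenPfaffWinterGwConvergence}) then guarantees that the family $\{\tilde{\nu}_m\}_{m \geq 2}$ determines the law of a random mm-space uniquely, so $\mathbb{P}^{\ast} = \mathbb{P}_{\CRT}$. Since every subsequential limit equals $\mathbb{P}_{\CRT}$, the full sequence converges.

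The main obstacle is purely expository: identifying and citing precisely the right abstract results from the Greven--Pfaffelhuber--Winter machinery, specifically the tightness criterion that reduces to tightness of the second moment distance matrix distribution, and the fact that the collection $\{\tilde{\nu}_m\}_{m \geq 2}$ uniquely determines the distribution on $\XX_c^{\GP}$. Once these are in place, the three ingredients --- annealed convergence of all $\tilde{\nu}_m$, tightness of $\{\mathbb{P}_n\}$, and uniqueness of subsequential limits --- combine via Prokhorov's theorem to give the claim.
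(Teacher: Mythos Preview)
Your overall strategy---annealed convergence of the $\tilde{\nu}_m$ plus tightness plus identification of subsequential limits---is exactly the route the paper takes (packaged there as an appeal to \cite[Corollary~3.1]{GrevenPfaffWinterGwConvergence}). However, there is a genuine gap in your tightness step.

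You assert that tightness of $\{\tilde{\nu}_2(\mathbb{P}_n)\}_n$ in $\M_1(\RR)$ alone suffices for tightness of $\{\mathbb{P}_n\}_n$ in $\M_1(\XX_c^{\GP})$. This is false in general: the tightness criterion \cite[Theorem~3]{GrevenPfaffWinterGwConvergence} has \emph{two} conditions, and tightness of the pairwise-distance distributions is only the first. The second is a ``modulus of mass distribution'' condition (see also \cite[Proposition~8.1]{GrevenPfaffWinterGwConvergence}), which roughly rules out mass fragmenting into many tiny well-separated pieces. Indeed, if your claim were correct, then combined with your own uniqueness argument it would show that convergence of all $\tilde{\nu}_m$ already implies distributional GP convergence---contradicting precisely the example \cite[Example~2.12(ii)]{GrevenPfaffWinterGwConvergence} that you yourself flag as the obstruction.

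The paper's proof verifies this second condition by invoking \cref{thm:lowermassboundUSTs} (and remarks immediately afterward that this is overkill, since the condition can be checked more directly). You need to supply some argument for it; your current proposal does not.
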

\begin{proof} We appeal to \cite[Corollary 3.1]{GrevenPfaffWinterGwConvergence} and verify conditions (i) and (ii) there. Condition (ii) is precisely \eqref{eq:prrestated}. To verify condition (i) we use \cite[Theorem 3]{GrevenPfaffWinterGwConvergence} (and recall that by Prohorov's Theorem the relative compactness of the measures is equivalent to their tightness) and verify conditions (i) and (ii) there (see also Proposition 8.1 in \cite{GrevenPfaffWinterGwConvergence}). Condition (i) is just saying that $\tilde{\nu}_2$ is a tight sequence of measures on $\RR$, which follows from \eqref{eq:prrestated}. Lastly, \cref{thm:lowermassboundUSTs} directly implies condition (ii) \cite[Theorem 3]{GrevenPfaffWinterGwConvergence}.
\end{proof}

We remark that the use of \cref{thm:lowermassboundUSTs} in the last line of the proof above is an overkill and it is not too difficult to verify condition (ii) of \cite[Theorem 3]{GrevenPfaffWinterGwConvergence} directly.

	\subsection{GHP convergence and the lower mass bound}
	The key to strengthening the $\GP$ convergence of \cite{PeresRevelleUSTCRT}, as stated in \cref{lem:PRgromov}, to $\GHP$ convergence is the lower mass bound criterion of \cite{AthreyaLohrWinterGap}. In \cite[Theorem 6.1]{AthreyaLohrWinterGap} it is shown that $\GP$ convergence of deterministic mm-spaces together with this criterion is equivalent to $\GHP$ convergence. In this paper we require an extension to the setting of random mm-spaces (i.e., measures on mm-spaces); it is not hard to obtain this using the ideas of \cite{AthreyaLohrWinterGap} and we provide it here (\cref{prop:lower mass bound}).

	As in \cite[Section 3]{AthreyaLohrWinterGap}, given $c > 0$ and an mm-space $(X,d,\mu)$ we define
	\begin{align*}
	m_c((X,d,\mu)) &= \inf_{x \in X}\{\mu (B(x, c))\} \, .
	\end{align*}
	
	We begin with a short claim about deterministic mm-spaces.
	
	\begin{claim}\label{claim: lower mass bound at inf}
		Let $(X_n,d_n,\mu_n)$ be a sequence of mm-spaces that is $\GP$-convergent to $(X,d,\mu)$, i.e., 
		$$\dGP( (X_n,d_n,\mu_n), (X,d,\mu)) \to 0 \, .$$ Suppose further that for any $c>0$ we have
		\begin{equation*}
		\inf_n m_c((X_n,d_n,\mu_n)) > 0 \, .
		\end{equation*}
		Then, for every $\eps>0$
		\begin{equation*}
		\inf_{x\in \supp(\mu)} \mu(B(x,\eps)) \geq \liminf_{n \to \infty}\inf_{x\in X_n} \mu_n\left(B(x,\eps/2)\right) > 0.
		\end{equation*}	
	\end{claim}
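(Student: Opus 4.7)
The strict positivity of the liminf is immediate: taking $c=\eps/2$ in the hypothesis gives $\inf_n \inf_{x \in X_n} \mu_n(B(x,\eps/2)) > 0$, hence the liminf is strictly positive. The real content of the claim is the inequality, so set $L := \liminf_n \inf_{y \in X_n} \mu_n(B(y,\eps/2))$, fix an arbitrary $x \in \supp(\mu)$, and aim to show $\mu(B(x,\eps)) \geq L$. The strategy is to unfold the definition of $\dGP$: for each $n$, pick isometric embeddings $\phi_n \colon X_n \to F_n$ and $\phi'_n \colon X \to F_n$ into a common space with $d_P(\phi_{n*}\mu_n, \phi'_{n*}\mu) \leq \delta_n$ where $\delta_n \to 0$, and then exploit the support property of $x$ to locate, in $X_n$, an approximate pre-image of $x$ under the embedding.

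\textbf{Step 1 (locating $y_n$ near $\phi'_n(x)$).} Fix a small $\delta>0$, which will eventually be sent to $0$. Since $x \in \supp(\mu)$, the quantity $\eta := \mu(B(x,\delta))$ is strictly positive. For all $n$ large enough, $\delta_n < \eta$. Applying the Prohorov inequality to the closed ball $A := \bar B_{F_n}(\phi'_n(x),\delta)$ (observing that $\phi'_{n*}\mu(A) \geq \mu(B(x,\delta)) = \eta$ because $\phi'_n$ is an isometric embedding), we obtain
\[
\eta \;\leq\; \phi'_{n*}\mu(A) \;\leq\; \phi_{n*}\mu_n\bigl(\bar B_{F_n}(\phi'_n(x),\,\delta+\delta_n)\bigr) + \delta_n,
\]
so the last ball has positive $\phi_{n*}\mu_n$-mass and therefore contains $\phi_n(y_n)$ for some $y_n \in X_n$; equivalently, $d_{F_n}(\phi_n(y_n),\phi'_n(x)) \leq \delta + \delta_n$. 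This is the only place where $x \in \supp(\mu)$ is used, and it is the main obstacle — without the lower mass bound nothing forces $X_n$ to have points near every $x$ in the embedding, so one must leverage the positive $\mu$-mass of neighborhoods of $x$ together with the Prohorov control.

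\textbf{Step 2 (transferring the ball estimate back).} By the triangle inequality in $F_n$, the isometric image $\phi_n(B_{X_n}(y_n,\eps/2))$ is contained in $\bar B_{F_n}(\phi'_n(x),\,\eps/2+\delta+\delta_n)$. Therefore, applying the Prohorov bound in the reverse direction,
\[
L_n \;\leq\; \mu_n\bigl(B_{X_n}(y_n,\eps/2)\bigr) \;\leq\; \phi_{n*}\mu_n\bigl(\bar B_{F_n}(\phi'_n(x),\,\tfrac{\eps}{2}+\delta+\delta_n)\bigr) \;\leq\; \mu\bigl(\bar B_X(x,\,\tfrac{\eps}{2}+\delta+2\delta_n)\bigr) + \delta_n,
\]
where $L_n := \inf_{y \in X_n}\mu_n(B(y,\eps/2))$.

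\textbf{Step 3 (passing to the limit).} Take $\liminf_{n\to\infty}$. Since $\delta_n \to 0$, the closed ball $\bar B_X(x,\,\eps/2+\delta+2\delta_n)$ is eventually contained in $\bar B_X(x,\,\eps/2+2\delta)$, so $L \leq \mu(\bar B_X(x,\eps/2+2\delta))$. Now let $\delta \downarrow 0$ and use continuity of $\mu$ from above along the decreasing intersection $\bigcap_{\delta>0}\bar B_X(x,\eps/2+2\delta) = \bar B_X(x,\eps/2)$ to conclude $L \leq \mu(\bar B_X(x,\eps/2)) \leq \mu(B(x,\eps))$. Since $x \in \supp(\mu)$ was arbitrary, this yields the desired inequality and completes the proof.
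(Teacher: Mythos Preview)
Your proof is correct and follows essentially the same two-step Prohorov argument as the paper: use $x\in\supp(\mu)$ together with one application of the Prohorov bound to locate a point $y_n\in X_n$ near $x$ in the common embedding, then use a second application of the Prohorov bound to transfer $\mu_n(B(y_n,\eps/2))$ back to a $\mu$-ball around $x$. The only cosmetic difference is in the bookkeeping of radii: the paper fixes the first radius at $\eps/4$ (so $y_n$ lands within $\eps/3$ of $x$) and introduces a separate parameter $\delta'<\eps/6$ for the second step, which lets it land directly on $\mu(B(x,\eps))$ without a limiting argument; you instead carry a parameter $\delta$ through both steps and send it to $0$ at the end via continuity from above, obtaining the marginally sharper bound $L\le\mu(\bar B(x,\eps/2))$.
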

	
	\begin{proof}
		
		Fix some $x\in \supp(\mu)$ and $\eps>0$. Then $\mu(B(x,\eps/4)) \geq b$ for some $b=b(x,\eps)>0$. Put $\delta = \min\{b/2,\eps/12\}$.
		By the $\GP$ convergence there exists $N\in \NN$ such that for every $n\geq N$ there are isometric embeddings taking $X_n$ and $X$ to a common metric space $(E,d_n')$ such that the Prohorov distance between the pushforwards of their measures is smaller than $\delta$. Therefore we may assume that $X_n$ and $X$ are both subsets of some common metric space. We abuse notation and write $\mu$ and $\mu_n$ in place of their respective pushforward measures. Since the $\GP$ distance is at most $\delta$ we get that
		\begin{equation*}
		b\leq \mu(B(x,\eps/4)) \leq \mu_n(B(x,\eps/4+\delta))+\delta.
		\end{equation*}
		for $n\geq N$. Hence by our choice of $\delta$ we get
		\begin{equation*}
		\mu_n(B(x,\eps/3)) > 0.
		\end{equation*}
		Therefore, we can find some $y_n\in X_n$ such that $d_n'(x,y_n) < \eps/3$. Also, for any $\delta'\in(0,\eps/6)$, we can find $N_2 \in \NN$ such that for $n\geq N_2$ we have
		\begin{equation*}
		\inf_{y\in X_n} \mu_n(B(y,\eps/2)) \leq \mu_n(B(y_n,\eps/2)) \leq \mu(B(y_n,\eps/2+\delta')) + \delta' \leq \mu(B(x,\eps)) + \delta'.
		\end{equation*} 
	Hence, taking the $\liminf$ on the left hand side and then taking $\delta' \to 0$ we obtain that for all $x\in X$
		\begin{equation*}
		\liminf_{n \to \infty}\inf_{y\in X_n} \mu_n(B(y,\eps/2)) \leq  \mu(B(x,\eps)),
	\end{equation*}
	and the claim follows by taking the infimum over $x\in X$.
	\end{proof}
	
We now state and prove the main goal of this section; as we state immediately afterwards, it readily shows that \cref{thm:lowermassboundUSTs} implies \cref{thm:maingeneral}.

	\begin{theorem}\label{prop:lower mass bound}
		Let $((X_n,d_n,\mu_n))_{n\geq 1}, (X,d,\mu)$ be random mm-spaces and suppose that 
		\begin{enumerate}[(i)]
			\item $(X_n, d_n, \mu_n) \overset{(d)}{\longrightarrow} (X,d,\mu)$ with respect to the $\mathrm{GP}$ topology.
			\item For any $c > 0$, the sequence $\left(m_c((X_n,d_n,\mu_n))^{-1}\right)_{n \geq 1}$ is tight.
		\end{enumerate}
		Then $(X_n, d_n, \mu_n) \overset{(d)}{\to} (\supp(\mu),d,\mu)$ with respect to the $\GHP$ topology.
	\end{theorem}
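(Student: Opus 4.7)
The plan is to deduce GHP convergence by (a) establishing tightness of the laws of $(X_n, d_n, \mu_n)$ in $(\XX_c^{\GHP}, \dGHP)$, and then (b) identifying every subsequential GHP-limit in distribution as $(\supp(\mu), d, \mu)$. The key abstract tool is the characterization from \cite{AthreyaLohrWinterGap} that a set $K \subset \XX_c$ is relatively compact in $(\XX_c^{\GHP}, \dGHP)$ if and only if it is relatively compact in $(\XX_c^{\GP}, \dGP)$ and $\inf_{Y \in K} m_c(Y) > 0$ for every $c > 0$.

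For (a), fix $\eta > 0$. By (i) and Prohorov's theorem applied to $(\XX_c^{\GP}, \dGP)$, there is a GP-relatively-compact $K_0 \subset \XX_c$ with $\pr((X_n, d_n, \mu_n) \in K_0) > 1 - \eta/2$ for every $n$. By (ii), for each integer $k \geq 1$ there is $\delta_k > 0$ with $\pr(m_{1/k}((X_n, d_n, \mu_n)) < \delta_k) < \eta \cdot 2^{-k-1}$ for every $n$. The set $K := K_0 \cap \bigcap_{k \geq 1} \{Y \in \XX_c : m_{1/k}(Y) \geq \delta_k\}$ then meets both conditions of the characterization above, and satisfies $\pr((X_n, d_n, \mu_n) \in K) > 1 - \eta$ uniformly in $n$, giving GHP-tightness.

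For (b), let $Z$ be any GHP-subsequential limit of $(X_{n_k}, d_{n_k}, \mu_{n_k})$. Since $\dGP \leq \dGHP$, the natural map $(\XX_c^{\GHP}, \dGHP) \to (\XX_c^{\GP}, \dGP)$ is continuous, so $Z$ is also a GP-subsequential limit; by (i), the image of $Z$ in $\XX_c^{\GP}$ coincides in law with that of $(X, d, \mu)$. This already forces $(\supp(\mu_Z), d_Z|_{\supp(\mu_Z)}, \mu_Z) \overset{(d)}{=} (\supp(\mu), d, \mu)$ in GHP. To upgrade to $Z \overset{(d)}{=} (\supp(\mu), d, \mu)$ I must show $\supp(\mu_Z) = Z$ almost surely. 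Apply Skorohod to realize $(X_{n_{k_j}}) \overset{(d)}{\to} Z$ as almost-sure convergence on a common probability space. A standard argument (take a near-infimizer $z \in Z$ for $m_c(Z)$, lift to a nearby sequence $z_j \in X_{n_{k_j}}$ via the GHP-embeddings, and bound ball masses using the Prohorov and Hausdorff parts of $\dGHP$) yields the upper semicontinuity $m_c(Z) \geq \limsup_j m_c(X_{n_{k_j}})$ a.s. For fixed $c > 0$ and $\eta > 0$, pick $\delta > 0$ by (ii) so that $\sup_n \pr(m_c(X_n) < \delta) < \eta$. Since $m_c(Z) = 0$ forces $m_c(X_{n_{k_j}}) < \delta$ eventually, Fatou for sets gives
\[
\pr(m_c(Z) = 0) \leq \pr\left(\liminf_j \{m_c(X_{n_{k_j}}) < \delta\}\right) \leq \liminf_j \pr(m_c(X_{n_{k_j}}) < \delta) \leq \eta \, .
\]
Letting $\eta \to 0$ and intersecting over rational $c$, we obtain $m_c(Z) > 0$ a.s.~for all $c > 0$. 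Since every $z \in Z$ then has $\mu_Z(B(z, c)) \geq m_c(Z) > 0$, it follows that $\supp(\mu_Z) = Z$ a.s., finishing the identification.

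The main technical obstacle is the upper semicontinuity of $m_c$ invoked above: the Prohorov term in $\dGHP$ only controls the mass of $\eps$-neighborhoods of closed sets rather than the exact mass of a ball, and $m_c$ is in general neither continuous nor lower semicontinuous. The standard workaround is to restrict to radii $c$ such that $\mu_Z$ places no mass on the sphere of radius $c$ around the chosen near-infimizer (which excludes at most countably many $c$), pass to the limit using a common isometric embedding furnished by the definition of $\dGHP$, and then extend to all $c > 0$ by the monotonicity of $m_c$ in $c$.
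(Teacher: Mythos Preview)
Your proof is correct and takes a genuinely different route from the paper.

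The paper applies Skorohod at the \emph{GP} level to get almost-sure GP convergence on one probability space, then argues directly, via \cref{claim: lower mass bound at inf} and a hands-on Prohorov calculation, that $d_H(X_n,\supp(\mu))\to 0$ in probability and hence $\dGHP\big((X_n,d_n,\mu_n),(\supp(\mu),d,\mu)\big)\to 0$ in probability. No tightness in $\XX_c^{\GHP}$ is ever established; the argument is essentially a probabilistic version of the deterministic implication ``GP convergence $+$ lower mass bound $\Rightarrow$ GHP convergence'' from \cite{AthreyaLohrWinterGap}, carried out by hand.

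You instead follow the classical tightness-plus-identification template. Your step (a) invokes the ALW relative-compactness characterization as a black box to obtain GHP-tightness; your step (b) passes to a subsequential GHP-limit $Z$, applies Skorohod at the \emph{GHP} level, and shows $\supp(\mu_Z)=Z$ by transferring the lower-mass-bound tightness to $Z$ via Fatou. This is cleaner and more modular, at the cost of leaning more heavily on \cite{AthreyaLohrWinterGap} (both the compactness criterion in (a) and, implicitly, the fact that the ``restrict to support'' map $\XX_c^{\GP}\to\XX_c^{\GHP}$ is well-defined and measurable in (b)). The paper's approach is more self-contained and avoids ever working with an abstract subsequential limit.

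One small correction: the inequality $m_c(Z)\geq\limsup_j m_c(X_{n_{k_j}})$ with the \emph{same} radius on both sides is not true in general --- what the embedding argument actually gives is $\limsup_j m_{c'}(X_{n_{k_j}})\leq \mu_Z\big(\overline{B(z,c)}\big)$ for any $c'<c$ and any $z\in Z$. This is harmless for your purposes: if $m_c(Z)=0$ then compactness yields $z^*$ with $\mu_Z(B(z^*,c))=0$, hence $\mu_Z(\overline{B(z^*,c')})=0$ for all $c'<c$, and so $m_{c/2}(X_{n_{k_j}})\to 0$; your Fatou argument then goes through with $c/2$ in place of $c$ on the $X_n$ side. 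Your final paragraph essentially anticipates this, but the fix is the radius shift and monotonicity rather than the sphere-mass restriction.
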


	\begin{proof} The metric space  $(\XX_c,\dGP)$ is separable (see \cite[Figure 1]{AthreyaLohrWinterGap}), hence by the Skorohod Representation theorem, there exists a probability space on which the convergence in $(i)$ holds almost surely. We will henceforth work on this probability space, and may therefore assume that $(X_n)_{n\geq 1}$ and $X$ are embedded in a common metric space where $d_P(X,X_n) \to 0$ almost surely. We will show that on this probability space, we have that $(X_n, d_n, \mu_n) \longrightarrow (\supp(\mu),d,\mu)$ in \emph{probability} with respect to the $\GHP$ topology, giving the required assertion. 
	
	Let $\eps, \eps_2 >0$.
		By $(ii)$, we have that there exists some $c_1>0$ and $N_1\in \NN$ such that for every $n\geq N_1$ we have
		\begin{equation*}
		\pr\left(\inf_{x\in X_n} \mu_n(B_{d_n}(x,\eps/2)) \leq  c_1\right) \leq \eps_2.
		\end{equation*}
		Hence by Fatou's lemma
		\begin{equation*}
		\pr\left(\limsup_{n} \left\{\inf_{x\in X_n} \mu_n(B_{d_n}(x,\eps/2)) > c_1 \right\} \right) \geq 1-\eps_2.
		\end{equation*}
		Meaning, with probability larger than $1-\eps_2$, we can find a (random) subsequence $n_k$ such that for every $k\in \NN$ we have that 
		\begin{equation*}
		m_{\eps/2}((X_{n_k},d_{n_k},\mu_{n_k})) = \inf_{x\in X_{n_k}} \mu_{n_k}(B_{d_{n_k}}(x,\eps/2)) > c_1.
		\end{equation*}
		Hence by Claim \ref{claim: lower mass bound at inf}, on this event we have that $\inf_{x\in \supp(\mu)} \mu(B_d(x,\eps)) \geq c_1$. Next, since almost sure convergence implies convergence in probability, we get by assumption (i) that
		\begin{equation*}
		\lim_{n\to\infty}\pr\left( d_P\left((X_n, d_n, \mu_n),(X,d,\mu)\right) > \eps \wedge \frac{c_1}{2} \right) = 0. 
		\end{equation*}
		Hence we can find $N_2\in \NN$ such that for every $n\geq \max\{N_1,N_2\}$ with probability at least $1-3\eps_2$ the three events
		\begin{equation*}
		\inf_{x\in X_n} \mu_n(B_{d_n}(x,\eps)) \geq c_1  \,\,, \,\, \inf_{x\in \supp(\mu)} \mu(B_d(x,\eps)) \geq c_1 \,\, , \,\, d_P \left((X_n, d_n, \mu_n),(X,d,\mu)\right) \leq \eps \wedge \frac{c_1}{2} \, ,
		\end{equation*}
		occur. Let $x \in \supp(\mu)$. Since the Prohorov distance between $\mu$ and $\mu_n$ is smaller than $\eps \wedge \frac{c_1}{2}$, we have that 
		\begin{equation*}
		c_1 \leq \mu(B_d(x,\eps)) \leq \mu_n(B_{d_n}(x,2\eps)) + \frac{c_1}{2}.
		\end{equation*}
		Hence 
		\begin{equation*}
		\mu_n(B_{d_n}(x,2\eps)) > 0.
		\end{equation*}
		Thus, $\supp(\mu) \subseteq X_n^{2\eps}$. We use the same argument to obtain that under this event, $X_n \subseteq \supp(\mu)^{2\eps}$ and conclude that
		\begin{equation*}
		\pr\left(\dGHP\left((X_n, d_n, \mu_n),(\supp(\mu),d,\mu)\right) > 2\eps \right) \leq 3\eps_2.
		\end{equation*}
		We therefore get that $(X_n,d_n,\mu_n)$ converges in probability (hence, in distribution) to $(\supp(\mu),d,\mu)$ in the Gromov-Hausdorff-Prohorov topology, as required.
	\end{proof}

	\noindent {\bf Proof of \cref{thm:maingeneral}}.
	\cref{lem:PRgromov} shows that the $\UST$ sequence converges in distribution with respect to $\dGP$ to the CRT $(X,d,\mu)$ so that condition (i) of \cref{prop:lower mass bound} holds. \cref{thm:lowermassboundUSTs} verifies that condition (ii) holds, and lastly, it is well known (see \cite[Theorem 3]{AldousCRTI}) that $\supp(\mu)=X$. The conclusion of \cref{prop:lower mass bound} thus verifies \cref{thm:maingeneral}. \qed 

	\section{Comments and open questions}\label{sec:the_end}

Combining with self-similarity of the CRT, Theorem 1.1 can also be used to recover the UST scaling limit in other settings. For instance, Theorem 2 of \cite{aldous1994recursive} entails that the branch point between three uniformly chosen points in the CRT splits the CRT into three smaller copies of itself, with masses distributed according to the Dirichlet$(\frac{1}{2}, \frac{1}{2}, \frac{1}{2})$ distribution, and where each copy is independent of the others after rescaling. This together with \cref{thm:main1} shows the following.

\begin{example}
Set $G_n = \ZZ_{\lfloor n^\frac{1}{d}\rfloor}^d$, the torus on (approximately) $n$ vertices and $d>4$. Sample a Dirichlet$(\frac{1}{2}, \frac{1}{2}, \frac{1}{2})$ random variable, that is, a uniform triplet $(\Delta_1, \Delta_2, \Delta_3)$ on the $2$-simplex. Conditioned on this, let $G_{\lfloor \Delta_1 n \rfloor}$, $G_{\lfloor \Delta_2 n \rfloor}$, and $G_{\lfloor \Delta_3 n \rfloor}$ be disjoint and attach each to an outer vertex of a $3$-star. Let $T_n$ be the $\UST$ on the resulting graph and $\mu_n$ the uniform measure on its vertices. 	
Then $(T_n, \frac{1}{\beta (d) \sqrt{n}} d_{T_n}, \mu_n) \overset{(d)}{\longrightarrow} (\T,d_{\T},\mu)$.
\end{example}

Next, building on the corollaries in \cref{sctn:corollaries of main thm}, one can also ask finer questions about the structure of the UST in the mean-field regime. One in particular is the convergence of the height profile.

\begin{open}
Take the setup of \cref{thm:maingeneral}, and set $H_n(r) = \#\{v \in G_n: d_{\T_n}(O,v) = r \}$. Does the process $\left(H_n(r\beta_n\sqrt{n})/\sqrt{n}\right)_{r > 0}$ converge to its continuum analogue on the CRT? (That is, the Brownian local time process $(\ell(r))_{r \geq 0}$ defined in \cite[Theorem 1.1]{drmota1997profile}).
\end{open}

This does not follow straightforwardly from the GHP convergence of \cref{thm:maingeneral} since that only captures the convergence of full balls of diameter $\sqrt{n}$ with volumes of order $n$. (On the other hand, it is straightforward prove convergence of the rescaled volume profile $V_n(r) = \sum_{s \leq r} H_n(s)$ from GHP convergence).

Next, our paper addresses the general mean-field case but leaves the upper critical dimension case of $\ZZ_{n^{1/4}}^4$ open. Here the mixing time is really of order $n^{1/2}$, but it was shown by Schweinsberg \cite{schweinsberg} that Gromov-weak convergence to the CRT still holds with an additional scaling factor of $(\log n)^{1/6}$. Our proof of the lower mass bound does not immediately transfer to the $4$-dimensional setting. However, it is possible that it is attainable to do so using the recent results of Hutchcroft and Sousi \cite{hutchcroftsousi}.


	\begin{open}\label{open:4d}
	 Let $\T_n$ be a uniformly drawn spanning tree of the $4$-dimensional torus $\ZZ_n^4$. Denote by $d_{\T_n}$ the corresponding graph-distance in $\T_n$ and by $\mu_n$ the uniform probability measure on the vertices of $\T_n$. Let $\gamma_n$ be the sequence appearing in \cite[Theorem 1.1]{schweinsberg}, uniformly bounded away from $0$ and infinity. Does the lower mass bound of \cref{prop:lower mass bound}(ii) hold for the sequence $\left(\T_n,\frac{d_{\T_n}}{\gamma_n n^{2}(\log n)^{1/6}} ,\mu_n\right)_{n \geq 1}$?
	\end{open}

Finally, one may also ask whether USTs rescale to the CRT under the weaker assumptions of \cite{MNS}, under which the authors prove that the sequence of rescaled UST diameters is tight. In particular, they do not assume transitivity but instead require that the graph is \emph{balanced}; that is, there exists a constant $D< \infty$ such that $$\frac{\max_{v \in G_n} \deg v}{\min_{v \in G_n} \deg v} \leq D$$ for all $n$. It is straightforward to extend the proof of \cref{thm:lowermassboundUSTs} to this setting by carrying the constant $D$ through all our computations, but since we are still restricted by the assumption of transitivity of \cite{PeresRevelleUSTCRT} for \cref{thm:maingeneral} we have chosen to keep the notation simple and have not pursued this here.

\bibliographystyle{abbrv}
\bibliography{biblio}

\bigskip
\noindent \textsc{Department of Mathematical Sciences, Tel Aviv University, Tel Aviv 69978, Israel} \par
\noindent  \textit{Emails:} \texttt{eleanora@mail.tau.ac.il, asafnach@tauex.tau.ac.il, matanshalev@mail.tau.ac.il} \par
  \addvspace{\medskipamount}

\end{document}